\documentclass[11pt, twoside]{article}

\usepackage{amsmath,amsthm}

\usepackage[utf8]{inputenc}

\usepackage{enumerate}

\usepackage{fancyhdr}
\usepackage{cite}
\usepackage{enumitem}

\usepackage{graphics}

\usepackage{algcompatible}
\usepackage{algorithm}
\usepackage{algpseudocode}

\usepackage{xargs}                      
\usepackage[colorinlistoftodos,prependcaption,textsize=tiny]{todonotes}
\usepackage[normalem]{ulem}

\newif\ifpreprint

\usepackage{caption}
\usepackage{subcaption}

\usepackage{hyperref}
\hypersetup{%
    colorlinks=True, 
    citecolor=blue,
    linkcolor=black}

\usepackage[charter]{mathdesign}

\newcommand{\rom}[1]{\uppercase\expandafter{\romannumeral #1\relax}}

\newcommand{\black}{\color{black}}

\usepackage{environ}
\NewEnviron{cases2}[1]{%
                      \begin{equation}\label{#1}\left\{ \begin{alignedat}{2} \BODY \end{alignedat}\right. \end{equation} }
\NewEnviron{cases3}[1]{%
                      \begin{equation}\label{#1} \begin{alignedat}{2} \BODY \end{alignedat} \end{equation} }
\NewEnviron{cases22}{%
                      \begin{equation}\left\{ \begin{alignedat}{2} \BODY \end{alignedat}\right. \end{equation} }

\theoremstyle{plain}
  \newtheorem{theorem}{Theorem}[section]
  \newtheorem{corollary}[theorem]{Corollary}
  
  \newtheorem{lemma}[theorem]{Lemma}
  \newtheorem{definition}[theorem]{Definition}
  \newtheorem{remark}[theorem]{Remark}
   
  \newtheorem{example}[theorem]{Example}

  \newtheorem*{assumption*}{Assumption}

  \newcommand{\vk}[1]{v^{(#1)}}
  
  \newcommand{\Uk}[1]{U^{(#1)}}
  \newcommand{\Zk}[1]{Z^{(#1)}}

  \newcommand{\frR}[1]{\Cr^{(#1)}}
  \newcommand{\frT}[1]{\Ct^{(#1)}}

  \newcommand{\Rki}[2]{R^{(#1)}_{#2}}

  \numberwithin{equation}{section}
    
\providecommand{\Div}{\operatorname{div}}          

\newcommand{\VJ}{{\mathbf{J}}}

\newcommand{\VN}{{\mathbf{N}}}

\newcommand{\VR}{{\mathbf{R}}}

\newcommand{\Dsf}{{\mathsf{D}}}

\providecommand{\Ca}{{\cal A}}

\providecommand{\Ce}{{\cal E}}

\providecommand{\Cj}{{\cal J}}

\providecommand{\Cp}{{\cal P}}

\providecommand{\Cr}{{\cal R}}
\providecommand{\Cs}{{\cal S}}
\providecommand{\Ct}{{\cal T}}

\setenumerate[0]{label=(\alph*)}

\newcommand{\eps}{{\varepsilon}}

\newcommand{\ben}{\begin{equation}}
\newcommand{\een}{\end{equation}}

\newcommand{\benn}{\begin{equation*}}
\newcommand{\eenn}{\end{equation*}}

\usepackage[colorinlistoftodos]{todonotes}

\usepackage{authblk}
\usepackage[T1]{fontenc}
\usepackage[utf8]{inputenc}

\title{Second order Taylor-like topological expansion  with multiple holes}
\author{Kevin Sturm}

\begin{document}
\maketitle
\begin{abstract}
In this paper we derive a framework for topological expansions of shape functionals with respect to multiple holes based on the expansion of one hole. Our strategy is similar to deriving a Taylor expansion of order two of functions in $\VR^d$ by applying $d$-times a Taylor expansion in $\VR$.

For shape functionals defined on subsets of $\VR^2$ we relate the topological expansion with multiple holes to a Taylor expansion with respect to the ball volumes where usual
derivatives are replaced by topological state derivatives. The first topological state derivative was introduced in a previous paper and relates the first topological derivative of cost functionals. The second topological state derivative at two distinct points is introduced, similar to a second directional derivatives of functions defined on Banach spaces, 
as the topological state derivative of the topological state derivative. The second topological state derivative at the same point is defined as the "far-away" component of the compound layer expansion of the state variable. 

In dimension three the obtained topological expansion still involves first and second topological state derivatives, but it does not follow the same pattern of a Taylor expansion in the ball volumes. As we will show this dimension dependent behaviour is rather of geometrical nature and already appears in cost functionals without PDE constraints. 
\end{abstract}

\tableofcontents

\section{Introduction}

In this paper, we study higher order topological expansions with multiple topological perturbations simultaneously of cost functional $J(\Omega) = \Cj(u_\Omega)$ with $u_\Omega:\Dsf\to \VR$ being the solution to 
\begin{cases2}{eq:intro1}
-\Delta u_\Omega + \gamma u_\Omega & = f_{\Omega} && \quad \text{ in } \Dsf\\
    u_\Omega & = g && \quad \text{ on } \Gamma \\
    \partial_n u_\Omega & = h && \quad \text{ on } \Gamma_N,
\end{cases2}
with $\gamma\ge 0$ a non-negative number, $f_\Omega$ a piecewise constant function and $g,h$ given smooth enough data. 

Topological derivatives have been thoroughly studied for single "hole" perturbations. We refer to the pioneering works \cite{a_GAGUMA_2001a,a_SOZO_1999a} and also to the monographs \cite{b_NOSOZO_2019a,b_NOSO_2013a}, the chapter \cite{a_ALDAJO_2021a} and the introductory paper \cite{a_AM_2021a} for examples and  theoretical/numerical results. 

Topological derivatives or even higher order expansions can be computed using different techniques. There are various Lagrangian techniques, e.g., approaches using a perturbed adjoint equation; see Amstutz' approach \cite{a_AM_2006a} or \cite{a_ST_2020a} for the averaged adjoint approach. Another important method to compute higher order topological derivatives is the so-called compound layer expansion \cite{b_MANAPL_2012a, b_MANAPL_2012b} which allows the expansion first of the state variable and then the cost functional. It introduces successive approximations followed by so-called correctors.  As shown in \cite{a_BAGAST_2021b,a_BAST_2021a} the adjoint approaches can be efficiently combined with the compound layer expansion to compute higher order topological derivatives. Other papers dealing with higher order topological expansions \cite{a_BOCO_2017a} examining three dimensional anisotropic elasticity and \cite{a_BO_2009a} studying a  2D potential problems.

Multiple simultaneous topological perturbations of certain cost functionals and linear PDEs have been studied in some publications. Here we quote \cite{a_CALANO_2015a} where the Kohn-Volgelius cost functional for the source reconstruction for the Laplacian is stuidied and an expansion of multiple holes is proved. In this referenced work the PDE \eqref{eq:intro1} with $\gamma=0$ has been studied. Although we do not cover this special cost functional our analysis can be used to recover this case as well. Other publications such as  EIT \cite{a_HILANO_2011a} also derive asymptotics of cost functionals with respect to multiple simultaneous topological perturbations at once. Notably in \cite{a_HILANO_2011a} the topological perturbation appears in the differential operator and is therefore much more singular than our problem \eqref{eq:intro1}.

In this paper we aim to give a unified and concise approach to the computation of topological expansions with respect to multiple simultaneous topological perturbations. We give a general theorem on how such an expansion can be obtain, which makes it possible to adapt for other problems and cost functionals as well. 
Moreover, we link the second order topological expansion of multiple holes with second order topological state derivatives which obey typical calculus rules. For this we extend the notion of the first topological state derivative introduced in \cite{a_BAMAST_2023a} to the second order. Our analysis shows that there is a link between second order topological state derivatives and the compound layer expansion. Let us mention that for our type of problem (and other low order perturbations with respect to one inclusion/hole) it has been observed already earlier that the first derivative can be computed using classical derivatives in appropriate $L^p$ spaces; see, e.g., Section 2 in \cite{a_AM_2021a}. This also relates to the first order topological derivative of \cite{a_BAMAST_2023a}.

\paragraph{Our key contributions}

\begin{itemize}
    \item systematic derivation technique for topological expansions with multiple simultaneous "holes"
    \item recovering second order topological derivatives with usual calculus rules
    \item introduction of second order topological state derivatives
    \item the role of the compound layer expansion and second order topological state derivatives
\end{itemize}

\subsection{Overview of the results} 
\subsubsection{Taylor-like expansion with multiple inclusions}
Let $\Omega$ be a domain and let $\Omega_{\eps_1}$ be a topological perturbation of it by a ball $B_{\eps_1}:= B_{\eps_1}(x_1)$ of radius $\eps_i>0$ centered at $x_1$. Then we show that we have a topological expansion of the form
\begin{equation}\label{eq:expansion_J_basis_one_hole}
    J(\Omega_{\eps_1}) = J(\Omega) + |B_{\eps_1}|DJ(\Omega)(x_1) + \ell(\eps_1) D^{\frac32}J(\Omega)(x_1,x_1) + \frac{|B_{\eps_1}|^2}{2} D^2J(\Omega)(x_1,x_1) + o(|B_{\eps_1}|^2) 
\end{equation}
with a function $\ell(\eps_1)$ satisfying for small $\eps_1>0$ that $|B_{\eps_1}|>\ell(\eps_1) > |B_{\eps_1}|^2$. The term $DJ(\Omega)(x_1,x_1)$ is the first order topological derivative while $D^{\frac32}J(\Omega)(x_1,x_1)$ and $D^2J(\Omega)(x_1,x_1)$ are defined by the expansion. 

Let now $\Omega_{\eps_1,\ldots,\eps_n}$ be denote $n$-simultaneous topological perturbations of $\Omega$ with balls $B_{\eps_i}:= B_{\eps_i}(x_1)$ of radius $\eps_i>0$ centered at $x_i$. Then we prove by repetitively applying \eqref{eq:expansion_J_basis_one_hole} that we indeed have an expansion of the form
\begin{align}\label{eq:expansion_J_basis_multi_hole}
    \begin{split}
    J(\Omega_{\eps_1,\ldots,\eps_n})  =  J(\Omega) &+ D\VJ (\Omega)(x_1,\ldots, x_n)\cdot \begin{pmatrix}
|B_{\eps_1}|\\ \vdots \\ |B_{\eps_n}|
\end{pmatrix} 
\\
                                                   & +  \text{diag}(D^{\frac32}J(\Omega)(x_1,x_1),\ldots D^{\frac32}J(\Omega)(x_n,x_n)) \begin{pmatrix} \sqrt{\ell(\eps_1)} \\ \ldots \\\sqrt{\ell(\eps_n)}\end{pmatrix}\cdot \begin{pmatrix} \sqrt{\ell(\eps_1)} \\ \ldots \\\sqrt{\ell(\eps_n)}\end{pmatrix}  \\
  & + \frac12D^2 \VJ(\Omega)(x_1,\ldots, x_n) \begin{pmatrix}
|B_{\eps_1}| \\\vdots \\ |B_{\eps_n}|
\end{pmatrix}\cdot \begin{pmatrix}
|B_{\eps_1}| \\ \vdots \\|B_{\eps_n}|
\end{pmatrix}  
+ o(|B_{\eps_1}|^2+\cdots + |B_{\eps_n}|^2),
    \end{split}
\end{align}
where $D\VJ (\Omega)(x_1,\ldots, x_n) \in \VR^n$ is a vector, $D^2 \VJ(\Omega)(x_1,\ldots, x_n) \in \VR^{n\times n}$ is a matrix, which we also refer to the \emph{topological Hessian}.  The term $\ell(\eps)$  satisfies for $\eps>0$ small 
 that $|B_{\eps}| > \ell(\eps) > |B_{\eps}|^2$. Moreover we have the following key observations:
 \begin{itemize}
     \item  For our problem we show for dimension $d=2$ we have $D^{\frac32}J(\Omega)(x_i,x_i)=0$ and can be set $\ell(\eps)=0$.
     \item For dimension $d=3$ we have $\ell(\eps)= \eps^{-1}|B_\eps|^2$ and $D^{\frac32}J(\Omega)(x_i,x_i)\ne 0$.
\item The diagonal elements of $D^2 \VJ(\Omega)(x_1,\ldots, x_n)$ are precisely $D^2J(\Omega)(x_i,x_i)$ from the expansion \eqref{eq:expansion_J_basis_one_hole}. 
\item The off-diagonal element $(D^2 \VJ(\Omega)(x_1,\ldots, x_n))_{ij}$, $i\ne j$, are the first order topological derivative of $\Omega\mapsto DJ(\Omega)(x_i)$ at the points $x_j$ and thus denoted $D^2J(\Omega)(x_i,x_j)$. 
\item In general we have
    \begin{equation}
        \lim_{x_i\to x_j} D^2J(\Omega)(x_i,x_j) \ne D^2J(\Omega)(x_i,x_i).
    \end{equation}
 \end{itemize}

 \subsubsection{Second order expansion as a Taylor expansion with classical calculus rules for $d=2$}
The second main result of our paper is that the first derivative vector $D\VJ (\Omega)(x_1,\ldots, x_n)$ and the matrix $D^2 \VJ(\Omega)(x_1,\ldots, x_n)$ in the expansion \eqref{eq:expansion_J_basis_multi_hole} can be computed like usual derivatives using so-called state derivatives. We consider the following cost functional for $d=2$:
\begin{equation}
    J(\Omega) = \int_\Dsf(u_\Omega -u_r)^2\;dx
\end{equation}
with some target function $u_r$, where $u_\Omega$ is the solution to \eqref{eq:intro1}. Then we show that there are operators $d_{x_i}$ and $d^2_{x_ix_j}$ ($d_{x_i}u_\Omega$ and $d_{x_ix_j}^2u_\Omega$ are the topological state derivatives), such that
\begin{equation}
    DJ(\Omega)(x_i) = d_{x_i} \int_\Dsf(u_\Omega -u_r)^2\;dx = \int_\Dsf2(u_\Omega -u_r) d_{x_i} u_\Omega\;dx
\end{equation}
and also 
\begin{align}
    \begin{split}
        D^2J(\Omega)(x_i,x_j) & = d_{x_j} d_{x_i} \int_\Dsf(u_\Omega -u_r)^2\;dx \\
                              & = d_{x_j}\int_\Dsf2(u_\Omega -u_r) 2 d_{x_i} u_\Omega\;dx \\
                              & = \int_\Dsf 2 (d_{x_i} u_\Omega)(d_{x_j}u_\Omega) + 2(u_\Omega-u_r) d_{x_ix_j}^2u_\Omega\;dx.
\end{split}
\end{align}
Moreover, we have a Taylor-like expansion in the volume $|B_{\eps_i}|:=|B_{\eps_i}(x_i)|$:
\begin{equation}
    J(\Omega_{\eps_i})  = J(\Omega) + |B_{\eps_i}| DJ(\Omega)(x_i) + \frac{|B_{\eps_i}|^2}{2} D^2J(\Omega)(x_i,x_i) + o(|B_{\eps_i}|^2). 
\end{equation}

\begin{itemize}
    \item The operator $d_{x_i}$ obeys the usual calculus rules, such as, product and chain rule.
    \item The second order operator is symmetric $d_{x_ix_j} = d_{x_jx_i}$.
    \item The second order state derivative $d_{x_ix_i}^2u_\Omega $ at $x_i$ are closely related to the second order term of the compound layer expansions of $u_\Omega$ with respect to the inclusion $B_{\eps_i}(x_i)$
    \item The state derivative $d_{x_i}u_\Omega$ corresponds to the volume $|B_{\eps_i}(x_i)|$.
    \item The state derivative $d_{x_ix_j}^2u_\Omega$ corresponds to the volume $|B_{\eps_i}(x_i)||B_{\eps_j}(x_j)|$. 
\end{itemize}

 \paragraph{Notation}
 Henceforth we let $\Dsf\subset \VR^d$, $d\in\{2,3\}$ be a bounded $C^1$ domain and $\Omega \subset \Dsf$ an open subset. We divide $\partial \Dsf$ into two smooth boundaries $\Gamma,\Gamma_N\subset \partial\Dsf$ that are disjoint and $\bar \Gamma_N \cup \bar\Gamma =\partial \Dsf$. We denote by $W^{k,p}(\Dsf)$ the usual Sobolev space  with differentiability $k\ge 1$ and integrability $p\in [1,\infty]$. The space $L^p(\Dsf)$ denotes the measurable function on $\Dsf$ that are $p$-integrable. The space $W^{1,p}_0(\Dsf)$ denotes the subspace of $W^{1,p}(\Dsf)$ with trace zero on $\partial \Dsf$. Similarly $W^{1,p}_\Gamma(\Dsf)$ denotes the subspace of $W^{1,p}(\Dsf)$ with trace zero on $\Gamma$. The space $\VR^d$ is usually endowed with the Euclidean norm $|x|^2:= \sum_{i=1}^d |x_i|^2$, $x=(x_1,\ldots, x_d)\in \VR^d$. The Euclidean ball at a point $x\in \VR^d$ with radius $\eps$ is denoted by $B_\eps(x)$. If $x=0$ we use the notation $B_\eps:= B_\eps(0)$ and if additionally $\eps=1$, then we set $B:= B_1(0)$.

\section{Topological derivatives and multiple holes}
Henceforth we denote by $\Cp(\Dsf)$ the powerset of a set $\Dsf\subset \VR^d$. The domain $\Dsf$ is referred to as hold-all domain where all shapes are contained. A function $J:\Ca\subset \Cp(\Dsf)\to \VR$ is called a shape functional.  

The ball of radius $\eps>0$ centered $x_1\in \VR^d$ is denoted by $B_\eps(x_1)$. For the unit ball at the center $x_1=0$ with radius $\eps>0$, we set $B_\eps:= B_\eps(0)$ and for the unit ball at the center we simply write $B$. 
\subsection{First and second topological derivative}

\begin{definition}\label{def:first_and_second}
 Let $\Ca$ be an admissible set containing all open subsets of $\Dsf$.   Let $J:\Ca\to \VR$ be a shape functional and an open set $\Omega\subset \Dsf$ be given.
    \begin{itemize}
        \item We define the first order topological derivative at $x_1\in \Dsf\setminus \partial \Omega$:
\begin{equation}
    DJ(\Omega)(x_1):=  \lim_{\eps\searrow0 }\begin{cases}
        \frac{J(\Omega\setminus\overline{B_\eps(x_1)}) - J(\Omega)}{|B_\eps(x_1)|} & \text{ for } x_1 \in \Omega \\
        \frac{J(\Omega\cup  B_\eps(x_1)) - J(\Omega)}{| B_\eps(x_1)|} & \text{ for } x_1 \in \Dsf\setminus\overline{\Omega}.
    \end{cases}
\end{equation}
\item The second order topological derivative at $(x_1,x_2)\in \Dsf\setminus \partial \Omega$ for $x_1\ne x_2$ is defined by 
\begin{equation}
    D^2J(\Omega)(x_1,x_2):=  \lim_{\eps\searrow0 }\begin{cases}
        \frac{DJ(\Omega\setminus\overline{B_\eps(x_2)})(x_1) - DJ(\Omega)(x_1)}{| B_\eps(x_2)|} & \text{ for } x_2 \in \Omega \\
        \frac{DJ(\Omega\cup  B_\eps(x_2))(x_1) - DJ(\Omega)(x_1)}{| B_\eps(x_1)|} & \text{ for } x_2 \in \Dsf\setminus\overline{\Omega}.
    \end{cases}
\end{equation}
\end{itemize}

\end{definition}

\begin{definition}
For an open set $\Omega\subset \Dsf$ and $x_1\in \Dsf\setminus \partial \Omega$ we introduce the notation 
\begin{equation}
    \Omega_\eps(x_1) := \begin{cases}
        \Omega\setminus B_\eps(x_1) & \text{ for } x_1\in \Dsf\setminus\overline \Omega \\
        \Omega\cup B_\eps(x_1) & \text{ for } x_1\in \Omega. 
    \end{cases}
\end{equation}
With this definition we can write the first and second topological derivative in the compact form: 
\begin{equation}
    DJ(\Omega)(x_1) = \lim_{\eps \searrow 0} \frac{J(\Omega_\eps(x_1))-J(\Omega)}{|B_\eps(x_1)|} 
\end{equation}
and assuming $x_1\ne x_2$:
\begin{equation}
     D^2J(\Omega)(x_1,x_2) = \lim_{\eps \searrow 0} \frac{DJ(\Omega_\eps(x))(x_1)-DJ(\Omega)(x_1)}{|B_\eps(x_2)|}.
\end{equation}
\end{definition}

Note that in Definition~\ref{def:first_and_second} the "diagonal" case $D^2J(\Omega)(x_1,x_2)$ for $x_1=x_2$ is not defined yet. This definition is outlined next. 
\begin{definition}\label{def:DJ2_diagonal}
    Assume that for $x_1\in \Dsf\setminus \partial \Omega$ there are $\Cs_\Omega(x_1)\in \VR$ and a mapping $\eps\mapsto \ell_{\Omega,x_1}(\eps)$ on $ (0,\zeta]\to \VR, \zeta>0$, such that for $\eps>0$ small
\begin{equation}
    J(\Omega_\eps(x_1)) = J(\Omega) + |B_\eps(x_1)| DJ(\Omega)(x_1) + \ell_{\Omega,x_1}(\eps) + \frac{|B_\eps(x_1)|^2}{2}\Cs_\Omega(x_1) + o(|B_\eps(x_1)|^2)
\end{equation}
with $|B_\eps|^2<\ell_{\Omega,x_1}(\eps)<|B_\eps|$ for all small $\eps>0$. Then we define:
\begin{equation}
    D^2J(\Omega)(x_1,x_1):= \Cs_\Omega(x_1).
\end{equation}
So the "diagonal" element $D^2J(\Omega)(x_1,x_1)$ corresponding to the term $\frac{|B_\eps(x_1)|^2}{2}$ in the asymptotic expansion in the "hole" $x_1$. 
\end{definition}

\begin{remark}
In the next example we will later see that for $d=3$, we have
\begin{equation}
    \lim_{y\to x_1} D^2J(\Omega)(x_1,y) = D^2J(\Omega)(x_1,x_1).
\end{equation}
However, from our study in this paper it seems that this is only in specific cases true. 
\end{remark}

\begin{definition}
For every set $\Omega\subset \Dsf$ we define the indicator function as:
\begin{equation}
    \sigma_\Omega(x):= \begin{cases}
 -1& \text{ for } x\in \Omega, \\
 1& \text{ for } x\in \Omega^c.
    \end{cases}
\end{equation}
\end{definition}

Let us finish this section with a simple example.

\begin{example}
Let us consider the simple example
\begin{equation}
    J(\Omega)= \int_\Omega f\;dx, \quad f\in C^{d+1}(\VR^d)
\end{equation}
for a bounded domain $\Omega\subset \VR^d$.
\begin{equation}
    DJ(\Omega)(x_1) = \sigma_\Omega(x_1), \quad D^2J(\Omega)(x_1,x_1)=\sigma_\Omega(x_1) \frac{2}{|B|^2 d!}\int_B \nabla^d f(x_1)[x]^d\;dx.
\end{equation}
 \end{example}
\begin{proof}
 We have
\begin{equation}
    J(\Omega_\eps(x_1))- J(\Omega) = \sigma_\Omega(x_1)\int_{B_\eps(x_1)} f(x)\;dx = \sigma_\Omega(x_1) \eps^d \int_{B} f(x_1 + \eps x)\;dx.
\end{equation}
  So a Taylor expansion on the right hand side yields
  \begin{equation}\label{eq:DJ_f_full}
      J(\Omega_\eps(x_1))- J(\Omega) = \sigma_\Omega(x_1)\eps^d\left(|B|f(x_1) + \eps \int_{B} \nabla f(x_1)\cdot x\;dx + \cdots + \eps^d\frac{1}{d!}\int_{B} \nabla^d f(x_1)[x]^d\;dx)\right) + o(\eps^{2d}).
\end{equation}
From the definition of the first and second topological derivative the result follows. 
\end{proof}

\begin{remark}\label{rem:int_nabla_f_xk}
    Note that for $k\ge 1$ odd we have that $\int_B \nabla^k f(x_1)[x]^k\;dx = 0$.
    \begin{proof}
        Since  with $B^+:= \{x\in B:\; x_1 >0\}$ and $B^-:= \{x\in B:\; x_1 < 0\}$, we can write  
    \begin{equation}\label{eq:int_nabla_k}
        \int_B \nabla^k f(x_1)[x]^k\;dx = \int_{B^+} \nabla^k f(x_1)[x]^k\;dx + \int_{B^-} \nabla^k f(x_1)[x]^k\;dx.
    \end{equation}
    Now $-B^- = B^+$ and thus changing variables $T(x):= -x$ we obtain 
    \begin{align}
        \begin{split}
        \int_{B^+} \nabla^k f(x_1)[x]^k\;dx & = \int_{T(B^-)}\nabla^k f(x_1)[x]^k\;dx \\
                                            & = \int_{B^-}\nabla^k f(x_1)[-x]^k\;dx  \\
                                            &= (-1)^k\int_{B^-} \nabla^k f(x_1)[x]^k\;dx.
    \end{split}
\end{align}
Now $(-1)^k=-1$ since $k$ is odd and it follows that  the integral on the left hand side in \eqref{eq:int_nabla_k} vanishes.
\end{proof}
\end{remark}

\begin{lemma}\label{lem:example_int_f}
Let $f$ and $J$ be as in the previous lemma. Let $\Omega\subset \Dsf$ be a bounded domain and $x_1\in \Dsf\setminus\partial \Omega$. 
\begin{itemize}
    \item[(a)] For dimension $d=2$, we have
\begin{equation}\label{eq:DJ_f}
    DJ(\Omega)(x_1) = \sigma_\Omega(x_1)f(x_1) \quad \text{ and } \quad D^2J(\Omega)(x_1,x_1)= \sigma_\Omega(x_1) \frac{\Delta f(x_1)}{2\pi}.
\end{equation}
In particular,
\begin{equation}\label{eq:expansion_Jf}
    J(\Omega_\eps(x_1)) = J(\Omega) + \sigma_\Omega(x_1)\left( |B_\eps(x_1)|f(x_1) + \frac{|B_\eps(x_1)|^2}{2} \frac{\Delta f(x_1)}{2\pi}\right) + O(\eps^6). 
\end{equation}
\item[(b)] In dimension $d=3$ we have $D^2J(\Omega)(x_1,x_1)=0$ and 
    \begin{equation}\label{eq:expansion_Jf2}
        J(\Omega_\eps(x_1)) = J(\Omega) + \sigma_\Omega(x_1)\left( |B_\eps(x_1)|f(x_1) + \frac{\eps^{-1}|B_\eps(x_1)|^2}{2} \frac{3\Delta f(x_1)}{20\pi} \right)+ O(\eps^7).
    \end{equation}
\end{itemize}
\end{lemma}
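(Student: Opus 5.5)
The plan is to start from the identity already used in the preceding example,
\begin{equation*}
J(\Omega_\eps(x_1))-J(\Omega)=\sigma_\Omega(x_1)\,\eps^d\int_B f(x_1+\eps x)\;dx ,
\end{equation*}
and Taylor expand $f$ around $x_1$ to order four, with remainder of order five. By Remark~\ref{rem:int_nabla_f_xk} the first- and third-order terms $\int_B\nabla f(x_1)\cdot x\;dx$ and $\int_B\nabla^3 f(x_1)[x]^3\;dx$ vanish. The first surviving correction is therefore the quadratic one, $\frac{\eps^2}{2}\int_B\nabla^2 f(x_1)[x]^2\;dx$. The fourth-order term contributes $O(\eps^{d+4})$, which is $O(\eps^6)$ for $d=2$ and $O(\eps^7)$ for $d=3$; this needs $f\in C^4$, available since $f\in C^{d+1}$ with $d\ge 3$, or $C^3$ plus the vanishing odd term for $d=2$. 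That is exactly the remainder claimed. For $d=2$ one can alternatively note the fifth-order odd term also vanishes, but it is not needed.

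The key computation is the quadratic moment. By symmetry, $\int_B x_ix_j\;dx=\delta_{ij}\frac{1}{d}\int_B|x|^2\;dx=\delta_{ij}\frac{|\partial B|}{d(d+2)}$. Hence
\begin{equation*}
\int_B\nabla^2 f(x_1)[x]^2\;dx=\frac{|\partial B|}{d(d+2)}\,\Delta f(x_1).
\end{equation*}
For $d=2$ this equals $\frac{\pi}{4}\Delta f(x_1)$. The correction is then $\sigma_\Omega(x_1)\eps^4\frac{\pi}{8}\Delta f(x_1)$. Writing $\eps^4=|B_\eps|^2/\pi^2$ gives $\frac{|B_\eps|^2}{2}\frac{\Delta f(x_1)}{4\pi}$.

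This produces $\frac{1}{4\pi}$ rather than the stated $\frac{1}{2\pi}$. I will recheck this constant carefully, and I expect it is the main point of friction. I will verify it with $f=|x|^2$: then $\int_{B_\eps}|x|^2\;dx=\pi\eps^4/2$ and $\Delta f=4$, so $\frac{|B_\eps|^2}{2}\,c\cdot 4=\pi\eps^4/2$ forces $c=\frac{1}{4\pi}$. If this is confirmed, the proof will record the constant $\frac{1}{4\pi}$ and flag the discrepancy with the statement. The structure of the statement is unaffected either way: $DJ(\Omega)(x_1)=\sigma_\Omega(x_1)f(x_1)$ follows from the leading term $|B|\eps^d f(x_1)=|B_\eps|f(x_1)$. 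Moreover, with $\ell\equiv 0$ the remainder $O(\eps^6)=o(|B_\eps|^2)$, so Definition~\ref{def:DJ2_diagonal} gives the diagonal second derivative.

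For $d=3$, the same moment formula gives $\frac{4\pi}{15}\Delta f(x_1)$, so the correction is $\sigma_\Omega(x_1)\eps^5\frac{2\pi}{15}\Delta f(x_1)$. Using $|B_\eps|^2=\frac{16\pi^2}{9}\eps^6$ gives $\eps^5=\frac{9}{16\pi^2}\eps^{-1}|B_\eps|^2$. The correction becomes $\frac{\eps^{-1}|B_\eps|^2}{2}\cdot\frac{3\Delta f(x_1)}{20\pi}$, matching the statement. In the framework of Definition~\ref{def:DJ2_diagonal} this term is $\ell_{\Omega,x_1}(\eps)$, which indeed lies between $|B_\eps|^2\sim\eps^6$ and $|B_\eps|\sim\eps^3$ for small $\eps$; strictly, this holds up to sign and constant, i.e.\ in order of magnitude. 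The next contribution is $O(\eps^7)=o(|B_\eps|^2)$, so no $|B_\eps|^2$ term remains and $D^2J(\Omega)(x_1,x_1)=0$.
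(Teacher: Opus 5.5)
Your proof is correct and follows the paper's route: Taylor expansion of $\eps^d\int_B f(x_1+\eps x)\,dx$, vanishing of the odd moments, and evaluation of the quadratic moment, where the paper integrates by parts using $x=\frac12\nabla(|x|^2-1)$ and you use the symmetric second moments. Your constant $\frac{1}{4\pi}$ is also right: the paper's own numbers $|B|=\pi$ and $\int_B(|x|^2-1)\,dx=-\frac{\pi}{2}$ give $\frac{1}{4\pi}$, and its later remark quotes $\frac{1}{4\pi}\Delta f(x_1)$, so the $\frac{1}{2\pi}$ in the statement is a misprint.

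The one slip is your parenthetical claim that $f\in C^3$ plus the vanishing cubic moment gives $O(\eps^6)$ for $d=2$. It only gives $o(\eps^5)$: for $f=|x|^{7/2}$ at $x_1=0$ you have $f(0)=\Delta f(0)=0$ but $\int_{B_\eps}f\,dx=\frac{4\pi}{11}\eps^{11/2}$. That rate still identifies $D^2J(\Omega)(x_1,x_1)$, but the stated $O(\eps^6)$ needs $f\in C^{3,1}$ or $C^4$. The paper's proof passes over the same point, since it only assumes $f\in C^{d+1}$.
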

\begin{proof}
    \emph{ad (a):} First note that $\frac12 (\nabla (|x|^2-1)) = x$. Therefore partial integration yields
\begin{align}
    \begin{split}
    D^2J(\Omega)(x_1,x_1) & = \sigma_\Omega(x_1) \frac{1}{|B|^2} \frac12 \int_{B} \nabla^2 f(x_1)x\cdot \nabla (|x|^2-1)\;dx \\
                      & = -\sigma_\Omega(x_1) \frac{1}{|B|^2} \frac12 \int_{B} (|x|^2-1) \underbrace{\text{div}(\nabla^2 f(x_1) x)}_{= \Delta f(x_1)} \;dx.
\end{split}
\end{align}
Finally noting $|B| = \pi$ and $\int_{B} |x|^2-1\;dx = -\pi/2$ the equation \eqref{eq:DJ_f} follows. 

\emph{ad (b):} The expansion \eqref{eq:expansion_Jf2} follows from the Taylor expansion \eqref{eq:DJ_f_full} and a partial integration as in point (a) and $\int_B |x|^2-1\;dx = -\frac{8\pi}{15}$.
\end{proof}

\subsection{Abstract expansion of cost functionals with respect to multiple holes}
For $x_1,x_2\in \Dsf\setminus \partial \Omega$ with $x_1\ne x_2$, we define $B_{\eps_i}:= B_{\eps_i}(x_i)$, $i=1,2$ and 
\begin{equation}
    \Omega_{\eps_1}:= \Omega_{\eps_1}(x_1), \quad \Omega_{\eps_1,\eps_2}:= \Omega_{\eps_1,\eps_2}(x_1,x_2):= \begin{cases}
     \Omega_{\eps_1}\setminus B_{\eps_2}(x_2) & x_2\in \Omega \\
     \Omega_{\eps_1} \cup B_{\eps_2}(x_2) & x_2\in \Dsf\setminus\overline{\Omega}.
    \end{cases}
\end{equation}
For $x_1,\ldots, x_n$, $n\ge3$, with $x_i\ne x_j$ for $i\ne j$ the set $\Omega_{\eps_1,\ldots,\eps_n}(x_1,\ldots,x_n)=\Omega_{\eps_1,\ldots,x_n}$ is defined in the same fashion. 

Now we prove a theorem which allows to obtain topological expansion of $n$-holes by reducing it to iteratively applying the "one hole" expansion.  We set $B_\eps := B_\eps(0)$. Then we note that $|B_\eps|=|B_\eps(x_1)|$ for all $x_1\in \Dsf\setminus\partial \Omega$. 

\begin{definition}
Let $(\Omega,\eps)\mapsto \Cr_\Omega(\eps)$  be a function defined for $\delta >0$ on
\begin{equation}
    (0,\delta]\times \{\Omega\subset \Dsf: \Omega \text{ is an open set} \}. 
\end{equation}
    Let $f:(0,\delta)\to \VR^+$ be a given function. We say $|\Cr_\Omega(\eps)|=O(f(\eps))$ uniformly in $\Omega$, if for all $n\ge 1$,  $x_1,\ldots, x_n\in \Dsf\setminus\partial \Omega$ with $x_i\ne x_j$ for $i\ne j$ there is a constant $\zeta>0$ and $C>0$, such that 
\begin{equation}
    |\Cr_{\Omega_{\eps_1,\ldots, \eps_n}}(\eps)|\le C|f(\eps)| \quad \text{ for all } \eps, \eps_1,\ldots,\eps_n\in (0,\zeta].
\end{equation}
In the same way $|\Cr_\Omega(\eps)|=o(f(\eps))$ uniformly in $\Omega$ is defined. Examples for $f$ are $f(\eps)=|B_\eps|$ or  $f(\eps)=\eps^{-1}|B_\eps|^2$ for $\eps>0$.  
\end{definition}

Given $\delta >0$ let $(0,\delta)\to \VR^+: \eps \mapsto \ell(\eps)$ be a function given function such that $|B_\eps|^2< \ell(\eps) < |B_\eps|$ for all small $\eps>0$ and $\ell(\eps) = o(|B_\eps|)$. Note that $\lim_{\eps\searrow 0}\ell(\eps) =0$.

\begin{theorem}\label{main:thm_expansion_general}
    Let the following assumptions be satisfied. 
\begin{enumerate}
    \item[(A1)] For every open set $\Omega\subset \Dsf$ and every $x_1\in \Dsf\setminus \partial \Omega$, we assume that
    \begin{align}\label{eq:first}
        \begin{split}
            J(\Omega_{\eps_1}) = J(\Omega) + |B_{\eps_1}| DJ(\Omega)(x_1) & + \ell(\eps)D^{\frac32}J(\Omega)(x_1,x_1)  \\
                                                                          & + \frac{|B_{\eps_1}|^2}{2}  DJ^2(\Omega)(x_1,x_1) + \frR{1}_{\Omega,x_1}(\eps_1),
    \end{split}
\end{align}
with  $|\Cr_{\Omega,x_1}(\eps)| = o(|B_\eps|^2)$ uniformly in $\Omega$. 
\item[(A2)]
    For every open set $\Omega\subset \Dsf$ and every $x_1,x_2\in \Dsf\setminus \partial \Omega$, $x_1\ne x_2$, we assume that 
 \begin{equation}
     DJ(\Omega_{\eps_2})(x_1) = DJ(\Omega)(x_1) +|B_{\eps_2}|D^2J(\Omega)(x_1,x_2) + \frR{2}_{\Omega,x_1,x_2}(\eps_2)
\end{equation}
with $|\frR{2}_{\Omega,x_1,x_2}(\eps)| = o(|B_\eps|)$ uniformly in $\Omega$. 
\item[(A3)] For every open set $\Omega\subset \Dsf$ and every $x_1,x_2\in \Dsf\setminus \partial \Omega$, $x_1\ne x_2$, we assume that
\begin{align}
    D^{\frac32}J(\Omega_{\eps_2})(x_1,x_1) & = D^{\frac32}J(\Omega)(x_1,x_1) + \frR{3}_{\Omega,x_1,x_2}(\eps_2)
\end{align}
with $|\frR{3}_{\Omega,x_1,x_2}(\eps)| = O(|B_\eps|)$ uniformly in $\Omega$. 
\item[(A4)] For every open set $\Omega\subset \Dsf$ and every $x_1,x_2,x_3\in \Dsf\setminus \partial \Omega$, $x_1\ne x_3$ and $x_2\ne x_3$, assume that
\begin{align}
    \begin{split}
        D^2 J(\Omega_{\eps_3})(x_1,x_2) & = D^2 J(\Omega)(x_1,x_2) + \frR{4}_{\Omega,x_1,x_2}(\eps_3)
\end{split}
\end{align}
with  $|\frR{4}_{\Omega,x_1,x_2}(\eps_2)| = o(1)$  uniformly in $\Omega$. 
\end{enumerate}
Then we have for $n\ge 1$ and  $x_1,\ldots, x_n\in \Dsf\setminus\partial \Omega$  with $x_i\ne x_j$ for $i\ne j$:
\begin{align}\label{eq:expansion_proof_e1_e2_}
    \begin{split}
    J(\Omega_{\eps_1,\ldots,\eps_n})  =  J(\Omega) +& D\VJ (\Omega)(x_1,\ldots, x_n)\cdot \begin{pmatrix}
|B_{\eps_1}|\\ \vdots \\ |B_{\eps_n}|
\end{pmatrix} 
+  D^{\frac32} \VJ(\Omega)(x_1,\ldots, x_n) \begin{pmatrix}
    \sqrt{\ell(\eps_1)} \\\vdots \\ \sqrt{\ell(\eps_n)}
\end{pmatrix}\cdot \begin{pmatrix}
\sqrt{\ell(\eps_1)} \\ \vdots \\ \sqrt{\ell(\eps_n)}
\end{pmatrix}  
                              \\
& +\frac12D^2 \VJ(\Omega)(x_1,\ldots, x_n) \begin{pmatrix}
|B_{\eps_1}| \\\vdots \\ |B_{\eps_n}|
\end{pmatrix}\cdot \begin{pmatrix}
|B_{\eps_1}| \\ \vdots \\|B_{\eps_n}|
\end{pmatrix}  
          + o(|B_{\eps_1}|^2+\cdots + |B_{\eps_n}|^2),
    \end{split}
\end{align}
where
\begin{align}
    D^2 \VJ(\Omega)(x_1,\ldots, x_n) & :=  \begin{pmatrix}
        D^2J(\Omega)(x_1,x_1) &\cdots &  D^2J(\Omega)(x_1,x_n) \\
                 \vdots             &\ddots & \vdots \\
        D^2J(\Omega)(x_1,x_n) &\cdots  & D^2J(\Omega)(x_n,x_n)
\end{pmatrix}\\
        D^{\frac32} \VJ(\Omega)(x_1,\ldots, x_n) & =  \begin{pmatrix}
            D^{\frac32}J(\Omega)(x_1,x_1) &0 &  0 \\
                 0             &\ddots & \vdots \\
                0  &\cdots  & D^{\frac32}J(\Omega)(x_n,x_n)
\end{pmatrix}\\
    D\VJ (\Omega)(x_1,\ldots, x_n) &:=   \begin{pmatrix}
DJ(\Omega)(x_1) \\ \vdots \\ DJ(\Omega)(x_n)
\end{pmatrix} 
    \end{align}
\end{theorem}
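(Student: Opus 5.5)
Induction on $n$ is the natural route. The case $n=1$ is exactly (A1). The inductive step peels off the last hole: set $\Omega' := \Omega_{\eps_1,\ldots,\eps_{n-1}}$, so that $\Omega_{\eps_1,\ldots,\eps_n} = \Omega'_{\eps_n}(x_n)$. Note that $x_n\in \Dsf\setminus\partial\Omega'$ for all small $\eps_i$, since the points are distinct.

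Applying (A1) to $\Omega'$ at $x_n$ gives
\begin{equation*}
J(\Omega_{\eps_1,\ldots,\eps_n}) = J(\Omega') + |B_{\eps_n}|DJ(\Omega')(x_n) + \ell(\eps_n)D^{\frac32}J(\Omega')(x_n,x_n) + \frac{|B_{\eps_n}|^2}{2}D^2J(\Omega')(x_n,x_n) + \frR{1}_{\Omega',x_n}(\eps_n).
\end{equation*}
By the uniformity assumption in (A1), the last term is $o(|B_{\eps_n}|^2)$ with constants independent of $\eps_1,\ldots,\eps_{n-1}$. The induction hypothesis expands $J(\Omega')$ in $\eps_1,\ldots,\eps_{n-1}$. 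The remaining work is to expand the coefficients evaluated at $\Omega'$ back to $\Omega$, also by peeling off one hole at a time.

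\textbf{Coefficient of $|B_{\eps_n}|$.} Apply (A2) successively with the perturbed hole $x_j$, $j=1,\ldots,n-1$, each time at the current intermediate domain $\Omega_{\eps_1,\ldots,\eps_{j-1}}$. This gives
\begin{equation*}
DJ(\Omega')(x_n) = DJ(\Omega)(x_n) + \sum_{j<n}|B_{\eps_j}|\,D^2J(\Omega_{\eps_1,\ldots,\eps_{j-1}})(x_n,x_j) + \sum_{j<n} o(|B_{\eps_j}|).
\end{equation*}
Then (A4), applied repeatedly, replaces $D^2J(\Omega_{\eps_1,\ldots,\eps_{j-1}})(x_n,x_j)$ by $D^2J(\Omega)(x_n,x_j)+o(1)$. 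Multiplying by $|B_{\eps_n}|$ yields the off-diagonal terms $\sum_{j<n}|B_{\eps_n}||B_{\eps_j}|D^2J(\Omega)(x_n,x_j)$ plus an error $o(|B_{\eps_n}||B_{\eps_j}|)$. By $ab\le \frac12(a^2+b^2)$ this error is $o(|B_{\eps_1}|^2+\cdots+|B_{\eps_n}|^2)$. Combined with the $(n-1)$-hole Hessian, this produces the full symmetric quadratic form: each unordered pair $i\ne j$ appears exactly once with coefficient $|B_{\eps_i}||B_{\eps_j}|$, which matches $\frac12\cdot 2$ in $\frac12 D^2\VJ\,v\cdot v$. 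This requires the matrix entries to be symmetric, $D^2J(\Omega)(x_n,x_j)=D^2J(\Omega)(x_j,x_n)$. Alternatively, one interprets the displayed matrix (whose lower triangle is written as $D^2J(\Omega)(x_1,x_n)$ etc.) as the symmetrized one, since only the symmetric part of the matrix enters the quadratic form.

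\textbf{Coefficients of $\ell(\eps_n)$ and $|B_{\eps_n}|^2/2$.} Using (A3) $n-1$ times gives $D^{\frac32}J(\Omega')(x_n,x_n)=D^{\frac32}J(\Omega)(x_n,x_n)+O(\sum_{j<n}|B_{\eps_j}|)$. The error is then bounded by $\ell(\eps_n)|B_{\eps_j}| \le \frac12(\ell(\eps_n)^2+|B_{\eps_j}|^2)$, and $\ell(\eps_n)^2 = o(|B_{\eps_n}|^2)$ because $\ell(\eps)=o(|B_\eps|)$. For the diagonal second-order term I only need $D^2J(\Omega')(x_n,x_n)=D^2J(\Omega)(x_n,x_n)+o(1)$ uniformly. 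This is not literally among (A1)--(A4), since (A4) requires $x_1\ne x_3$, $x_2\ne x_3$, and here the two arguments coincide ($x_1=x_2=x_n$) while the perturbed points $x_j$, $j<n$, differ from $x_n$. So (A4) is applicable in that sense, and I would read it this way, as the hypothesis permits $x_1=x_2$.

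The main obstacle is the bookkeeping of uniformity. Each remainder must be bounded by a constant independent of the other radii, and this is exactly what the \emph{uniformly in $\Omega$} clause provides, applied with the intermediate domains $\Omega_{\eps_1,\ldots,\eps_{j}}$. A second delicate point is converting mixed errors $o(|B_{\eps_i}|)\,|B_{\eps_n}|$ into $o(\sum_k|B_{\eps_k}|^2)$. Here I would argue with $\max_k\eps_k\to0$ and Young's inequality, noting that a little-$o$ factor times $ab$ is still $o(a^2+b^2)$.
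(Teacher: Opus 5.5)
Your argument is the paper's argument. For $n=2$ the paper applies (A1) to $\Omega_{\eps_2}$ at $x_1$, expands $DJ(\Omega_{\eps_2})(x_1)$, $D^{\frac32}J(\Omega_{\eps_2})(x_1,x_1)$ and $D^2J(\Omega_{\eps_2})(x_1,x_1)$ via (A2)--(A4), and absorbs the mixed remainders. It uses (A4) with coinciding first two arguments, exactly as you do, and then leaves general $n$ to induction. Your induction with explicit uniformity bookkeeping is a more careful version of this. Two details need fixing, though neither changes the strategy.

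First, you do not need a symmetry hypothesis, and your ``symmetrized'' reading would not repair anything on its own. Your peeling produces each unordered pair $\{j,n\}$ with the single coefficient $D^2J(\Omega)(x_n,x_j)$. It does not produce the average $\frac12\bigl(D^2J(\Omega)(x_n,x_j)+D^2J(\Omega)(x_j,x_n)\bigr)$, which is what the symmetric part of the natural matrix would give. The simple fix is to peel off $x_1$ instead of $x_n$, or equivalently to relabel the points. This is what the paper does for $n=2$, and then (A2) yields $D^2J(\Omega)(x_i,x_j)$ with $i<j$, which are exactly the entries of the displayed matrix. In fact both peeling orders are legitimate under (A1)--(A4). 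Comparing the two resulting two-hole expansions (peel $x_i$ last versus $x_j$ last) along $\eps_i=\eps_j$ shows $D^2J(\Omega)(x_i,x_j)=D^2J(\Omega)(x_j,x_i)$. So symmetry is a consequence of the hypotheses, not an extra assumption.

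Second, the Young step for the (A3) error is misapplied. The bound $\ell(\eps_n)|B_{\eps_j}|\le\frac12\bigl(\ell(\eps_n)^2+|B_{\eps_j}|^2\bigr)$ leaves $\frac12|B_{\eps_j}|^2$ with no vanishing factor, so you only get $O$, not $o$, of $|B_{\eps_1}|^2+\cdots+|B_{\eps_n}|^2$. Instead write $\ell(\eps_n)=\omega(\eps_n)|B_{\eps_n}|$ with $\omega(\eps)\to 0$. Then $\omega(\eps_n)|B_{\eps_n}||B_{\eps_j}|\le\frac12\omega(\eps_n)\bigl(|B_{\eps_n}|^2+|B_{\eps_j}|^2\bigr)$. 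This is the principle you state in your closing sentence, and it is how the paper disposes of $\ell(\eps_1)\frR{3}_{\Omega,x_1,x_2}(\eps_2)$.
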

\begin{proof}
    We only prove the case for two "holes". The general case then follows by induction. By Assumption (A1) applied to $\Omega_{\eps_1}$ and $\Omega_{\eps_1,\eps_2}$, we have
\begin{align}\label{eq:expansion_general1}
    \begin{split}
        J(\Omega_{\eps_2}) = J(\Omega) + |B_{\eps_2}| DJ(\Omega)(x_2) &+ \ell(\eps_2) D^{\frac32}J(\Omega)(x_2,x_2) \\
                                                                      & +  |B_{\eps_2}|^2 \frac12 D^2J(\Omega)(x_2,x_2) + \frR{1}_{\Omega,x_2}(\eps_2) 
\end{split}
\end{align}
and
\begin{align}\label{eq:expansion_general2}
    \begin{split}
        J(\Omega_{\eps_1,\eps_2}) = J(\Omega_{\eps_2}) + |B_{\eps_1}|DJ(\Omega_{\eps_2})(x_1) &+ \ell(\eps_1) D^{\frac32}J(\Omega_{\eps_2})(x_1,x_1)\\
                                                                                              & + |B_{\eps_1}|^2 \frac12 D^2J(\Omega_{\eps_2})(x_1,x_1) + \frR{1}_{\Omega_{\eps_2},x_1}(\eps_1).
\end{split}
\end{align}
By Assumptions (A2)-(A4) we have for $x_1\ne x_2$:
\begin{align}\label{eq:expansion_general3}
    \begin{split}
        DJ(\Omega_{\eps_2})(x_1) & = DJ(\Omega)(x_1)  + |B_{\eps_2}| D^2J(\Omega)(x_1,x_2) + \frR{2}_{\Omega,x_1,x_2}(\eps_2)  \\
        D^{\frac32}J(\Omega_{\eps_2})(x_1,x_1) & = D^{\frac32}J(\Omega)(x_1,x_1) + \frR{3}_{\Omega,x_1,x_2}(\eps_2) 
    \end{split}
\end{align}
and 
\begin{align}\label{eq:expansion_general4}
    \begin{split}
    D^2J(\Omega_{\eps_2})(x_1,x_1) & = D^2J(\Omega)(x_1,x_1) + \frR{4}_{\Omega,x_1,x_1}(\eps_2).
\end{split}
\end{align}
Therefore plugging \eqref{eq:expansion_general1},\eqref{eq:expansion_general3}-\eqref{eq:expansion_general4} into \eqref{eq:expansion_general3} yields:
\begin{align}\label{eq:expansion_proof_e1_e2}
    \begin{split}
        J(\Omega_{\eps_1,\eps_2}) = J(\Omega) & + |B_{\eps_1}|DJ(\Omega)(x_1) + |B_{\eps_2}|DJ(\Omega)(x_2) + \ell(\eps_1) D^{\frac32}J(\Omega)(x_1,x_1) + \ell(\eps_2) D^{\frac32}J(\Omega)(x_2,x_2)\\
                               & + \frac12 |B_{\eps_1}|^2 DJ(\Omega)(x_1,x_1) + \frac12 |B_{\eps_2}|^2DJ(\Omega)(x_2,x_2) + |B_{\eps_1}||B_{\eps_2}| DJ(\Omega)(x_1,x_2) \\
                               & + \Cr_\Omega(\eps_1,\eps_2),
\end{split}
\end{align}
where the remainder is given by
\begin{align*}
    \Cr_\Omega(\eps_1,\eps_2) =&  |B_{\eps_1}|\frR{2}_{\Omega,x_1,x_2}(\eps_2) + \ell(\eps_1) \frR{3}_{\Omega,x_1,x_2}(\eps_2) \\
                               & +  \frac12 |B_{\eps_1}|^2\frR{4}_{\Omega,x_1,x_2}(\eps_2)  
                               + \frR{1}_{\Omega,x_2}(\eps_2) + \frR{1}_{\Omega_{\eps_2},x_1}(\eps_1).
\end{align*}
Now note that 
\begin{equation}
    \ell(\eps_1)  |\frR{3}_{\Omega,x_1,x_2}(\eps_2)| = o(|B_{\eps_2}|) O(|B_{\eps_1}|) = o(|B_{\eps_1}|^2 + |B_{\eps_2}|^2) 
\end{equation}
and $|\frR{1}_{\Omega_{\eps_2},x_1}(\eps_1)| = o(|B_{\eps_1}|^2)$ uniformly in $\Omega$. It follows that $|\Cr_\Omega(\eps_1,\eps_2)| = o(|B_{\eps_1}|^2 + |B_{\eps_2}|^2)$. Finally it is readily checked that \eqref{eq:expansion_proof_e1_e2} can be brought into the form \eqref{eq:expansion_proof_e1_e2_}, which finishes the proof.
\end{proof}

Later on we will discuss several concrete examples which fall into the framework of the previous theorem.

\subsection{Auxiliary results: scaling inequalities and remainder estimates}
In this section we recall the scaling behavior of some well-known inequalities under the affine transformation $T_\eps(x) = \eps x + x_1$ for a fixed point $x_1$.  For the proofs we refer to \cite[Section~3]{a_BAST_2021a}.

\begin{definition}
    For $\eps>0$ we define the inflation of a set $A\subset \VR^d$ by $A_\eps^{-1}:= T_\eps^{-1}(A)$, where $T_\eps(x) = x_1 + \eps x$ and $x_1\in A$. 
\end{definition}

\begin{definition}
For  $x_1\in \Dsf$ and $T_\eps(x)=\eps x+ x_1$, we define the scaled $H^1$-norm on $\Dsf_\eps^{-1}$ for $\eps >0$:
\begin{equation}
    \|\varphi\|_\eps:=\eps\|\varphi\|_{L^2(\Dsf_\eps^{-1})}+\|\nabla \varphi\|_{L^2(\Dsf_\eps^{-1})^{d}}, \quad \varphi\in H^1(\Dsf_\eps^{-1}).
\end{equation}
\end{definition}
\black

\begin{lemma}\label{lem:change_variables_eps}
Let $\eps>0$.
\begin{itemize}

    \item[(a)] For $1\le p<\infty$ and $\varphi \in L^p(\Dsf_\eps^{-1})$ there holds
\begin{equation}
    \eps^{\frac{d}{p}}\|\varphi\|_{L^p(\Dsf_\eps^{-1})}=\|\varphi\circ T_\eps^{-1}\|_{L^p(\Dsf)}.
\end{equation}

\item[(b)] For $1\le p<\infty$ and $\varphi \in W^1_p(\Dsf_\eps^{-1})$ there holds
\begin{equation}
    \eps^{\frac{d}{p}-1}\|\nabla \varphi\|_{L^p(\Dsf_\eps^{-1})}=\|\nabla(\varphi\circ T_\eps^{-1})\|_{L^p(\Dsf)}.
\end{equation}

\item[(c)] For $\varphi \in H^1(\Dsf_\eps^{-1})$ there holds
\begin{equation}
\|\varphi\circ T^{-1}_\eps\|_{H^1(\Dsf)} = \eps^{\frac{d}{2}-1}\|\varphi\|_{\eps}.
\end{equation}

\end{itemize}
\end{lemma}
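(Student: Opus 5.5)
The plan is to prove all three items by the change of variables $y = T_\eps(x) = x_1+\eps x$, which maps $\Dsf_\eps^{-1}=T_\eps^{-1}(\Dsf)$ bijectively onto $\Dsf$. Its Jacobian matrix is $\eps I$, so $dy = \eps^d\,dx$. Throughout we set $\psi:=\varphi\circ T_\eps^{-1}$, a function on $\Dsf$, so that $\psi(T_\eps(x)) = \varphi(x)$ for $x\in\Dsf_\eps^{-1}$.

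\emph{Item (a).} We compute directly:
\begin{equation*}
\|\psi\|_{L^p(\Dsf)}^p=\int_\Dsf|\varphi(T_\eps^{-1}(y))|^p\,dy=\eps^d\int_{\Dsf_\eps^{-1}}|\varphi(x)|^p\,dx .
\end{equation*}
Taking $p$-th roots gives $\|\psi\|_{L^p(\Dsf)}=\eps^{d/p}\|\varphi\|_{L^p(\Dsf_\eps^{-1})}$.

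\emph{Item (b).} The chain rule gives $\nabla\psi(y)=\eps^{-1}(\nabla\varphi)(T_\eps^{-1}(y))$, since $T_\eps^{-1}(y)=(y-x_1)/\eps$. For $W^{1,p}$ functions this holds in the weak sense, because $T_\eps$ is affine and bi-Lipschitz; one can verify it by testing against $C_c^\infty$ functions, or by approximating with smooth functions. Applying (a) to the vector field $\nabla\varphi$ then yields
\begin{equation*}
\|\nabla\psi\|_{L^p(\Dsf)}=\eps^{-1}\eps^{d/p}\|\nabla\varphi\|_{L^p(\Dsf_\eps^{-1})}.
\end{equation*}

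\emph{Item (c).} We read $\|\cdot\|_{H^1(\Dsf)}$ as $\|\cdot\|_{L^2}+\|\nabla\cdot\|_{L^2}$, which matches the sum structure of $\|\cdot\|_\eps$. With $p=2$, (a) gives $\|\psi\|_{L^2(\Dsf)}=\eps^{d/2}\|\varphi\|_{L^2(\Dsf_\eps^{-1})}$ and (b) gives $\|\nabla\psi\|_{L^2(\Dsf)}=\eps^{d/2-1}\|\nabla\varphi\|_{L^2(\Dsf_\eps^{-1})}$. Adding the two and factoring out $\eps^{d/2-1}$ leaves $\eps\|\varphi\|_{L^2}+\|\nabla\varphi\|_{L^2}=\|\varphi\|_\eps$.

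The only delicate point is the choice of norm in (c). If the $H^1$ norm is instead taken as $(\|\cdot\|_{L^2}^2+\|\nabla\cdot\|_{L^2}^2)^{1/2}$, the identity holds only up to the equivalence constants between the $\ell^1$ and $\ell^2$ combinations. In that case the statement should be understood with the sum norm, and I would note this explicitly. The weak chain rule in (b) is standard and causes no real difficulty.
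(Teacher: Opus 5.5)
Your proof is correct and is the standard change-of-variables argument ($dy=\eps^d\,dx$ and $\nabla(\varphi\circ T_\eps^{-1})=\eps^{-1}(\nabla\varphi)\circ T_\eps^{-1}$), which the paper does not reproduce but defers to its cited reference. Your caveat is also correct: (c) is an exact identity only when $\|\cdot\|_{H^1(\Dsf)}$ is read as the sum norm $\|\cdot\|_{L^2}+\|\nabla\cdot\|_{L^2}$, matching the definition of $\|\cdot\|_\eps$.
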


The next lemma rephrases classical inequalities for Sobolev spaces and Lp-spaces on a fixed domain $\Dsf$ in terms of the scaled domain $\Dsf_\eps^{-1}$; cf \cite{a_BAST_2021a}. 
\begin{lemma}[{\hspace{-0.001em}\cite[Lemma~3.4]{a_BAST_2021a}}]\label{lem:scaling_inequality}
    Let $\Dsf \subset \VR^d$ be a bounded Lipschitz domain, $\Gamma \subset \partial \Dsf$ and let $\eps >0$. Recall the definitions $\Dsf_\eps^{-1} =T_\eps^{-1}(\Dsf)$ and $\Gamma_\eps^{-1} = T_\eps^{-1}(\Gamma)$.
\begin{itemize}

    \item[(a)] For $1\le p\le q\le\infty$ there exists a constant $C>0$, such that
\begin{equation}
    \|\varphi\|_{L^p(\Dsf_\eps^{-1})} \le C \eps^{\frac{d}{q}-\frac{d}{p}}\|\varphi\|_{L^q(\Dsf_\eps^{-1})}.
\end{equation}

\item[(b)] Let $d\ge3$ and $2^\ast$ denote the Sobolev conjugate of $2$. There exists a constant $C>0$, such that
\begin{equation}
    \|\varphi\|_{L^{2^\ast}(\Dsf_\eps^{-1})}\le C\|\varphi\|_\eps.
\end{equation}

\item[(c)] Let $d=2$ and $\alpha>0$ small. There exists a constant $C>0$ and $\delta>0$ small, such that
\begin{equation}
    \|\varphi\|_{L^{(2-\delta)^\ast}(\Dsf_\eps^{-1})}\le C \eps^{-\alpha}\|\varphi\|_\eps.
\end{equation}

\item[(d)] For $\varphi\in H^1(\Dsf_\eps^{-1})$ we have
\begin{equation}
    \|\varphi\|_{L^2(\Gamma_\eps^{-1})^d}\le C \eps^{-\frac{1}{2}}\|\varphi\|_\eps.
\end{equation}

\item[(e)] Given a smooth connected domain $\Gamma \subset \partial \Dsf$, there is a continuous extension operator\newline $Z_{\Gamma_\eps^{-1}}:H^{\frac{1}{2}}(\Gamma_\eps^{-1})\rightarrow H^1(\Dsf_\eps^{-1})$, such that
\begin{equation}
    \|Z_{\Gamma_\eps^{-1}}(\varphi)\|_\eps \le C (\eps^{\frac{1}{2}}\|\varphi\|_{L^2(\Gamma_\eps^{-1})^d}+|\varphi|_{H^{\frac{1}{2}}(\Gamma_\eps^{-1})}),\quad \text{ for all }\varphi \in H^{\frac{1}{2}}(\Gamma_\eps^{-1}),
\end{equation}
where $C>0$ is independent of $\eps$.

\item[(f)] Let $\Gamma \subset \partial \Dsf$ have positive measure. There exists a constant $C>0$, such that
\begin{equation}
    \|\varphi\|_{L^2(\Dsf_\eps^{-1})}\le C\eps^{-1}\|\nabla \varphi\|_{L^2(\Dsf_\eps^{-1})^{ d}},\quad \text{ for all }\varphi\in H_{\Gamma_\eps^{-1}}^1(\Dsf_\eps^{-1}).
\end{equation}

\end{itemize}
\end{lemma}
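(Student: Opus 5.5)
The statement just above is Lemma~\ref{lem:scaling_inequality}. I would prove every item by the same device: pull the problem back to the fixed domain $\Dsf$ with $\varphi\mapsto\varphi\circ T_\eps^{-1}$, apply the classical inequality on $\Dsf$ with an $\eps$-independent constant, and push the result forward again using the scaling identities of Lemma~\ref{lem:change_variables_eps}. The constants never depend on $\eps$ because the inequalities are always applied on the fixed domain $\Dsf$ or on $\Gamma$. Throughout I write $\psi:=\varphi\circ T_\eps^{-1}$.

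\emph{Items (a), (b), (f).}
\begin{itemize}
\item[(a)] On the bounded set $\Dsf$, H\"older gives $\|\psi\|_{L^p(\Dsf)}\le |\Dsf|^{1/p-1/q}\|\psi\|_{L^q(\Dsf)}$. Lemma~\ref{lem:change_variables_eps}(a) converts each side with the factors $\eps^{d/p}$ and $\eps^{d/q}$. Rearranging gives the power $\eps^{d/q-d/p}$; the case $q=\infty$ is immediate.
\item[(b)] Apply the Sobolev embedding $H^1(\Dsf)\hookrightarrow L^{2^\ast}(\Dsf)$ and then Lemma~\ref{lem:change_variables_eps}(a) and (c). This gives $\eps^{d/2^\ast}\|\varphi\|_{L^{2^\ast}}\le C\eps^{d/2-1}\|\varphi\|_\eps$. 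Since $d/2^\ast=d/2-1$, the powers cancel exactly.
\item[(f)] Apply Poincar\'e's inequality for $H^1_\Gamma(\Dsf)$ to $\psi$, which vanishes on $\Gamma$, and rescale $L^2$ and gradient norms by (a) and (b) of Lemma~\ref{lem:change_variables_eps}. This gives $\eps^{d/2}\|\varphi\|\le C\eps^{d/2-1}\|\nabla\varphi\|$.
\end{itemize}

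\emph{Items (c), (d), (e).}
\begin{itemize}
\item[(c)] For $d=2$ there is no critical embedding, so I embed $H^1(\Dsf)\hookrightarrow W^{1,2-\delta}(\Dsf)\hookrightarrow L^{(2-\delta)^\ast}(\Dsf)$, where $(2-\delta)^\ast=2(2-\delta)/\delta$. Rescaling loses a factor $\eps^{-d/(2-\delta)^\ast}$ against $\eps^{d/2-1}=1$, i.e. a factor $\eps^{-\delta/(2-\delta)}$. Choosing $\delta$ small makes this exponent at most $\alpha$. One subtlety is that the $L^2$ part of $\|\psi\|_{H^1}$ carries $\eps$, so I should keep the full $\|\varphi\|_\eps$ via Lemma~\ref{lem:change_variables_eps}(c).
\item[(d)] Use the trace theorem $\|\psi\|_{L^2(\Gamma)}\le C\|\psi\|_{H^1(\Dsf)}$. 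The surface measure scales like $\eps^{d-1}$, so $\|\psi\|_{L^2(\Gamma)}=\eps^{(d-1)/2}\|\varphi\|_{L^2(\Gamma_\eps^{-1})}$. Comparing with $\eps^{d/2-1}\|\varphi\|_\eps$ gives the factor $\eps^{-1/2}$.
\item[(e)] Take $Z_{\Gamma_\eps^{-1}}\varphi:=(Z_\Gamma(\varphi\circ T_\eps))\circ T_\eps$, where $Z_\Gamma$ is the fixed bounded extension operator $H^{1/2}(\Gamma)\to H^1(\Dsf)$. The full $H^{1/2}(\Gamma)$ norm splits into the $L^2(\Gamma)$ part and the seminorm. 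The $L^2$ part scales as $\eps^{(d-1)/2}$. The Gagliardo seminorm, a double integral with kernel $|x-y|^{-(d-1)-1}$, scales as $\eps^{(d-1)-d/2}=\eps^{d/2-1}$. Dividing by $\eps^{d/2-1}$ leaves $\eps^{1/2}\|\cdot\|_{L^2}$ plus the seminorm, as claimed.
\end{itemize}

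\emph{Main obstacles.} I expect two points to need care. The first is the exponent bookkeeping in (c), which requires showing that $\delta$ can be chosen as a function of $\alpha$. The second is in (e): the extension must be bounded by the $L^2$ part and the seminorm separately, with the correct weights. If needed, I would subtract a mean value to justify the seminorm control.
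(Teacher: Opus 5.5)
Your proposal is correct and matches the intended argument. The paper does not prove this lemma itself but imports it from \cite[Lemma~3.4]{a_BAST_2021a}, introducing it as a rephrasing of classical inequalities on the fixed domain $\Dsf$ in terms of $\Dsf_\eps^{-1}$, which is exactly what you do via the scaling identities of Lemma~\ref{lem:change_variables_eps}. Your exponent bookkeeping in each item checks out, up to one small slip in (e): the pulled-back datum must be $\varphi\circ T_\eps^{-1}$, i.e.\ $Z_{\Gamma_\eps^{-1}}\varphi:=(Z_\Gamma(\varphi\circ T_\eps^{-1}))\circ T_\eps$, and no mean-value subtraction is needed there, since the $L^2(\Gamma)$ part and the Gagliardo seminorm each scale homogeneously (as $\eps^{(d-1)/2}$ and $\eps^{d/2-1}$).
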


\begin{lemma}[{\hspace{-0.001em}\cite[Lem.~3.5]{a_BAST_2021a}}]\label{lma:remainder_est}
Let $V\in H^2_{loc}(\VR^d\setminus B)$ satisfy
\begin{equation}
|V(x)|=c_1 |x|^{-m}+\mathcal{O}(|x|^{-m-1}),\quad |\nabla V(x)|=c_2 |x|^{-m-1}+\mathcal{O}(|x|^{-m-2}),
\end{equation}
for $x\in \overline{B}^\mathsf{c}$, $m\in \VN$ and $c_1,c_2>0$ are constants. Then there is a constant $C>0$, such that for $\Gamma\subset \partial \Dsf$ and $\eps>0$ sufficiently small the following estimates hold:

\begin{itemize}

    \item[(i)] $\|V\|_{L^2(\Gamma_\eps^{-1})}\le C \eps^{\frac{2m+1-d}{2}}$.

    \item[(ii)] $|V|_{H^{\frac{1}{2}}(\Gamma_\eps^{-1})}\le C \eps^{\frac{2m+1-d}{2} + \frac12 }$.

    \item[(iii)] $\|\nabla V\|_{L^2(\Gamma_\eps^{-1})^{d}}\le C \eps^{\frac{2m+1-d}{2}+1}$.

\end{itemize}

\end{lemma}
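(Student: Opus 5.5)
The statement to prove is Lemma~\ref{lma:remainder_est}: $L^2$ and $H^{1/2}$ bounds for a decaying function $V$ on the inflated boundary $\Gamma_\eps^{-1}=T_\eps^{-1}(\Gamma)$. The approach is to use that $\Gamma_\eps^{-1}$ lies far from the origin, at distance of order $\eps^{-1}$. We fix $x_1\in\Dsf$ (away from $\partial\Dsf$), so there is $r_0>0$ with $|x_1-y|\ge r_0$ for all $y\in\Gamma$. Then every $x\in\Gamma_\eps^{-1}$ satisfies $|x|\ge r_0/\eps$. In particular, for $\eps$ small the set $\Gamma_\eps^{-1}$ lies in $\overline B^{\mathsf c}$, where the decay assumptions apply. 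We will obtain all three estimates by changing variables back to the fixed surface $\Gamma$ through $x=T_\eps^{-1}(y)=(y-x_1)/\eps$.

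\textbf{Estimates (i) and (iii).} Surface measure scales as $ds_x=\eps^{-(d-1)}ds_y$. This gives
\begin{equation*}
\|V\|_{L^2(\Gamma_\eps^{-1})}^2=\eps^{1-d}\int_\Gamma |V((y-x_1)/\eps)|^2\,ds_y .
\end{equation*}
Since $|(y-x_1)/\eps|\ge r_0/\eps$, the decay hypothesis yields $|V(\cdot)|\le C\eps^{m}$ on $\Gamma_\eps^{-1}$. Hence the squared norm is at most $C\eps^{1-d+2m}|\Gamma|$, and taking square roots gives $\eps^{(2m+1-d)/2}$. Estimate (iii) is identical, using $|\nabla V|\le C|x|^{-m-1}\le C\eps^{m+1}$, which supplies the extra factor $\eps$.

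\textbf{Estimate (ii).} We write the Gagliardo seminorm
\begin{equation*}
|V|_{H^{1/2}(\Gamma_\eps^{-1})}^2=\int_{\Gamma_\eps^{-1}}\int_{\Gamma_\eps^{-1}}\frac{|V(x)-V(z)|^2}{|x-z|^{d}}\,ds_x\,ds_z ,
\end{equation*}
where $d=(d-1)+1$, matching a manifold of dimension $d-1$. Substituting $x=T_\eps^{-1}y$ and $z=T_\eps^{-1}w$ multiplies the measures by $\eps^{2(1-d)}$ and the kernel by $\eps^{d}$, for a net factor $\eps^{2-d}$. Setting $W_\eps(y):=V(T_\eps^{-1}y)$, this reduces the task to bounding $|W_\eps|_{H^{1/2}(\Gamma)}$.

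The key difficulty is that the straight segment between $T_\eps^{-1}y$ and $T_\eps^{-1}w$ might pass near the origin, where $V$ is not controlled; this is also the step I expect to be the main obstacle. To handle it, I will not estimate the difference quotient along chords. Instead, I will use a Lipschitz bound along $\Gamma$ itself, or equivalently the trace inequality on a fixed neighbourhood $U$ of $\Gamma$ with $\mathrm{dist}(x_1,U)\ge r_0/2$. The route is the trace theorem $|W_\eps|_{H^{1/2}(\Gamma)}\le C\|W_\eps\|_{H^1(U)}$. On $U$ we have the pointwise bounds $|W_\eps|\le C\eps^{m}$ and $|\nabla W_\eps|=\eps^{-1}|\nabla V|\le C\eps^{m}$. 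Therefore $|W_\eps|_{H^{1/2}(\Gamma)}\le C\eps^{m}$, and multiplying by the scaling factor gives
\begin{equation*}
|V|_{H^{1/2}(\Gamma_\eps^{-1})}^2\le C\eps^{2-d+2m},
\end{equation*}
that is, $|V|_{H^{1/2}(\Gamma_\eps^{-1})}\le C\eps^{(2m+1-d)/2+1/2}$, as claimed.

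The assumption $V\in H^2_{loc}$ guarantees that the traces are well defined. The constants depend only on $\Gamma$, $U$, $r_0$, and the decay constants of $V$, so they are independent of $\eps$.
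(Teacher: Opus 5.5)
Your argument is correct. The paper gives no proof of this lemma itself; it is quoted from \cite[Lem.~3.5]{a_BAST_2021a}. So there is no in-text argument to compare yours against, and what you wrote serves as a valid self-contained justification.

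Parts (i) and (iii) are exactly the change of variables $x=T_\eps^{-1}(y)$, $ds_x=\eps^{1-d}\,ds_y$, combined with the bound $|x|\ge r_0/\eps$ on $\Gamma_\eps^{-1}$.

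For (ii), your scaling identity $|V|_{H^{1/2}(\Gamma_\eps^{-1})}^2=\eps^{2-d}|V\circ T_\eps^{-1}|_{H^{1/2}(\Gamma)}^2$ is right. Bounding the unscaled seminorm by an $H^1$ norm on a \emph{fixed} region at positive distance from $x_1$ is the key step: it avoids chords through the origin and it produces the factor $\eps^m$. A trace bound over all of $\Dsf_\eps^{-1}$ outside a fixed ball would not decay, since $\int_{R<|x|<c/\eps}|\nabla V|^2\,dx$ stays of order one.

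The one point to make precise is the choice of $U$. Take a Lipschitz domain containing $\Gamma$ in its boundary, e.g.\ $U:=\Dsf\setminus\overline{B_{r_0/2}(x_1)}$. Then:
\begin{itemize}
\item the trace theorem $H^1(U)\to H^{1/2}(\partial U)$ applies;
\item restricting the Gagliardo double integral from $\partial U\times\partial U$ to $\Gamma\times\Gamma$ can only decrease it, so an arbitrary $\Gamma\subset\partial\Dsf$ is allowed;
\item for $\eps<r_0/2$ the function $W_\eps$ is defined on all of $U$, with $|W_\eps|,|\nabla W_\eps|\le C\eps^m$ there, as you claim.
\end{itemize}
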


\section{Asymptotics of $-\Delta u_\Omega + \gamma u_\Omega = f_\Omega$ and topological state derivatives}
Throughout this section we let $\Dsf\subset \VR^d$ be a bounded domain with $C^1$ boundary $\partial \Dsf$. We let $\Gamma,\Gamma_N\subset \partial \Dsf$ be connected and  open, such that $\overline\Gamma\cup \overline\Gamma_N = \partial \Dsf$. Henceforth we also assume that if $\gamma=0$, then $\Gamma\ne \emptyset$.

\subsection{First and second topological state derivative of $u_\Omega$}
Given $g\in H^{\frac12}(\Gamma)$ and $h\in L^2(\Gamma_N)$, we consider for an open set $\Omega\subset \Dsf$ the solution $u_\Omega\in H^1(\Dsf)$ to 
\begin{cases2}{eq:state_fOmega}
-\Delta u_\Omega +\gamma u_\Omega & = f_{\Omega} && \quad \text{ in } \Dsf\\
    u_\Omega & = g && \quad \text{ on } \Gamma \\
    \partial_n u_\Omega & = h && \quad \text{ on } \Gamma_N,
\end{cases2}
where $n$ denotes the unit outward pointing normal vector field along $\partial \Gamma_N$ and $\partial_n = n(x) \nabla_x $ denotes the normal derivative. The piecewise constant function $f_\Omega$ is defined by 
\begin{equation}
    f_{\Omega}(x):= \begin{cases}
 f_1 & \text{ for } x\in \Omega \\
 f_2 & \text{ for } x\in \Dsf\setminus \Omega. 
    \end{cases}
\end{equation}

\begin{definition}\label{def:state_derivatives}
Let  $\Omega\subset \Dsf$ be an open set.
    \begin{itemize}
        \item[(a)]
The first topological state derivative of $u_\Omega$ at $x_1\in \Dsf\setminus \partial \Omega$ is defined by 
\begin{equation}
    d_{x_1} u_\Omega := \lim_{\eps\searrow 0} \frac{ u_{\Omega_\eps(x_1)} - u_\Omega}{|B_\eps(x_1)|} . 
\end{equation}
When the set $\Omega$ is fixed we also write $\dot u_{x_1}$ instead of $d_{x_1} u_\Omega$. 
\item[(b)] The second topological state derivative of $u_\Omega$ at $(x_1,x_2)$, $x_1,x_2\in \Dsf\setminus \partial \Omega$ is defined if $x_1\ne x_2$ by 
\begin{equation}
    d_{x_1x_2}^2 u_\Omega := \lim_{\eps\searrow0 } \frac{ d_{x_1} u_{\Omega_\eps(x_1)} - d_{x_1} u_\Omega}{|B_\eps(x_1)|}. 
\end{equation}
When the set $\Omega$ is fixed we also write $\ddot u_{x_1x_2}$ instead of $d^2_{x_1x_2} u_\Omega$.  
\end{itemize}
\end{definition}

\begin{lemma}[\hspace{-0.001em}\cite{a_BAMAST_2023a}]
  Let $\Omega\subset \Dsf$ be an open set. Let $x_1,x_2\in \Dsf\setminus\partial \Omega$ with $x_1\ne x_2$.
    \begin{itemize}
        \item[(a)] The first topological state derivative $\dot u_{x_1} = d_{x_1} u_\Omega$  exists as the strong limit in the space $W^{1,p}_\Gamma(\Dsf)$ for $1\le p<\frac{d}{d-1}$ and is the unique solution to
    \begin{cases2}{eq:ueps_state_derivative_fomega_equation}
        -\Delta \dot u_{x_1} +  \gamma \dot u_\Omega  =& c_{\Omega,x_1} \delta_{x_1} && \quad \text{ in }\Dsf \\
        \dot u_{x_1}  = &0  && \quad\text{ on } \Gamma\\
        \partial_n \dot u_{x_1}  = &0  && \quad\text{ on } \Gamma_N,
\end{cases2}
where $\delta_{x_1}$ denotes the Dirac measure at $x_1$ and $c_{\Omega,x_1}:= \sigma_{\Omega}(x_1) (f_1-f_2)$ is a constant.
\item[(b)]
    The second topological state derivative $\ddot u_{x_1x_2} = d^2_{x_1x_2} u_\Omega$ is zero in  $W^{1,p}_\Gamma(\Dsf)$ for $1\le p < \frac{d}{d-1}$:
\begin{equation}
    d_{x_1x_2}^2 u_\Omega =0. 
\end{equation}
\end{itemize}
\end{lemma}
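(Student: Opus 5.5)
For (a), I would study the difference quotient directly. Put $\eps>0$ and $w_\eps := (u_{\Omega_\eps(x_1)} - u_\Omega)/|B_\eps(x_1)|$. Since the problem \eqref{eq:state_fOmega} is linear and only the right-hand side depends on $\Omega$, subtracting the two equations gives
\begin{equation*}
-\Delta w_\eps + \gamma w_\eps = \frac{f_{\Omega_\eps(x_1)}-f_\Omega}{|B_\eps(x_1)|} \text{ in } \Dsf, \qquad w_\eps = 0 \text{ on } \Gamma, \qquad \partial_n w_\eps = 0 \text{ on } \Gamma_N .
\end{equation*}
For $\eps$ small, $B_\eps(x_1)$ lies entirely in $\Omega$ or entirely in $\Dsf\setminus\overline\Omega$. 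Hence the right-hand side equals $c_{\Omega,x_1}\,\chi_{B_\eps(x_1)}/|B_\eps(x_1)|$, with the sign and value of $c_{\Omega,x_1}=\sigma_\Omega(x_1)(f_1-f_2)$ read off from the definition of $f_\Omega$ and $\Omega_\eps(x_1)$. These averaged indicators converge to $c_{\Omega,x_1}\delta_{x_1}$ weakly-$\ast$ as measures. They are also uniformly bounded in $(W^{1,q}_\Gamma(\Dsf))^*$ for $q>d$, because $W^{1,q}\hookrightarrow C^{0,\alpha}$ gives
\begin{equation*}
\Big|\frac{1}{|B_\eps|}\int_{B_\eps(x_1)}\varphi \;dx - \varphi(x_1)\Big| \le C\eps^\alpha\|\varphi\|_{W^{1,q}} .
\end{equation*}
So the right-hand side converges in norm in $(W^{1,q}_\Gamma)^*$ to $c_{\Omega,x_1}\delta_{x_1}$, at rate $\eps^\alpha$.

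Next I would use the standard well-posedness of the mixed problem with measure (or dual) data. Since $\Gamma\neq\emptyset$ when $\gamma=0$, the operator $-\Delta+\gamma$ with these boundary conditions is an isomorphism $W^{1,p}_\Gamma(\Dsf)\to (W^{1,p'}_\Gamma(\Dsf))^*$ for $p$ in a neighbourhood of $2$ on a $C^1$ domain (Gröger-type / Meyers estimates). I need this range to contain $p<\frac{d}{d-1}$ close to $\frac d{d-1}$. Here $p'=q>d$, so $p'$ can be taken large only if the isomorphism holds for all such $p$. This is the main obstacle. I would handle it by duality: solve the adjoint problem with right-hand side in $(W^{1,p}_\Gamma)^*$ and show the solution lies in $W^{1,p'}$, which holds for $C^1$ domains with mixed conditions away from the interface $\overline\Gamma\cap\overline\Gamma_N$. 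Alternatively, following Stampacchia/Boccardo–Gallouët, I would test with truncations to get $\|w\|_{W^{1,p}}\le C\|\mu\|_{\mathcal M}$ for all $p<\frac d{d-1}$. This gives a uniform bound, and combining it with the dual-norm convergence on a dense set yields strong convergence. Granting this estimate, linearity gives $\|w_\eps-\dot u_{x_1}\|_{W^{1,p}}\le C\|\text{rhs}_\eps - c\delta_{x_1}\|\to0$, where $\dot u_{x_1}$ is the solution of \eqref{eq:ueps_state_derivative_fomega_equation}. Uniqueness comes from the same isomorphism, or from the duality (Stampacchia) definition of solution.

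For (b), I would use that $\dot u_{x_1}$, as characterised in (a) for an arbitrary open set, depends on the set only through the constant $c_{\Omega,x_1}$. Replace $\Omega$ by $\Omega_\eps(x_2)$ with $x_2\ne x_1$. For $\eps<|x_1-x_2|$ and small enough that $B_\eps(x_2)$ avoids $\partial\Omega$, the point $x_1$ is still in $\Dsf\setminus\partial\Omega_\eps(x_2)$ and $\sigma_{\Omega_\eps(x_2)}(x_1)=\sigma_\Omega(x_1)$. Hence $d_{x_1}u_{\Omega_\eps(x_2)}$ solves the identical equation with the same constant, and by uniqueness it equals $d_{x_1}u_\Omega$. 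The difference quotient is therefore identically zero for all small $\eps$, so the limit in $W^{1,p}_\Gamma$ is $0$. The only care needed is to read the definition with the perturbation at $x_2$; the argument is immediate once (a) is established.
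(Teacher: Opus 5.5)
Your part (b) is the paper's argument: for small $\eps$ one has $\sigma_{\Omega_\eps(x_2)}(x_1)=\sigma_\Omega(x_1)$, hence $d_{x_1}u_{\Omega_\eps(x_2)}=d_{x_1}u_\Omega$, and the difference quotient vanishes identically. You are also right that Definition~\ref{def:state_derivatives}(b) must be read with the perturbation at $x_2$.

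For (a) the paper gives no argument and simply cites \cite[Thm.~2.7]{a_BAMAST_2023a}, so a self-contained proof is a different route. Yours, however, has a genuine gap at the step you yourself flag. The averaged indicators $\mu_\eps:=c_{\Omega,x_1}|B_\eps|^{-1}\chi_{B_\eps(x_1)}$ do converge to $c_{\Omega,x_1}\delta_{x_1}$ in the norm of $(W^{1,q}_\Gamma(\Dsf))^*$, $q>d$. Transferring this to $W^{1,p}$, though, requires $-\Delta+\gamma$ to be an isomorphism $W^{1,p}_\Gamma(\Dsf)\to(W^{1,p'}_\Gamma(\Dsf))^*$ with $p'>d$, i.e.\ $W^{1,q}$-regularity of the mixed problem for some $q>d$. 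Gr\"oger-type results give this only for $q$ in a small neighbourhood of $2$, which suffices for $d=2$ but not for $d=3$. Regularity ``away from the interface $\overline\Gamma\cap\overline\Gamma_N$'' does not help, because the interface is exactly where $q>3$ can fail. One needs a genuine regularity result there, valid only under additional geometric conditions on $\partial\Gamma$, and your sketch does not supply it.

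The Boccardo--Gallou\"et fallback does not close the gap either. That estimate controls the solution by the total variation of the data, and the total variation of $\mu_\eps-c_{\Omega,x_1}\delta_{x_1}$ equals $2|c_{\Omega,x_1}|$ for every $\eps$. So your step `linearity gives $\|w_\eps-\dot u_{x_1}\|_{W^{1,p}}\to 0$' is unavailable. A uniform $W^{1,p}$ bound plus convergence against a dense set of test functions yields only weak convergence in $W^{1,p}$.

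The missing idea is to split off the singularity explicitly. This needs nothing beyond $H^1$ theory and is exactly what the paper's identity $d_{x_1}u_\Omega=|B|^{-1}(\Rki{1}{1}(\cdot-x_1)+\vk{1})$ encodes. Put $\Phi:=c_{\Omega,x_1}E(\cdot-x_1)$ and $\Phi_\eps:=c_{\Omega,x_1}|B_\eps|^{-1}\int_{B_\eps(x_1)}E(\cdot-y)\,dy$. By the mean value property, $\Phi_\eps=\Phi$ outside $B_\eps(x_1)$; this is the rescaled form of $\Uk{1}=\Rki{1}{1}$ on $\VR^d\setminus B$. Write $w_\eps=\Phi_\eps+h_\eps$ and $\dot u_{x_1}=\Phi+h$. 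The correctors $h_\eps,h\in H^1(\Dsf)$ have identical boundary data once $\overline{B_\eps(x_1)}\subset\Dsf$. Then $h_\eps-h\in H^1_\Gamma(\Dsf)$ solves the homogeneous mixed problem with right-hand side $-\gamma(\Phi_\eps-\Phi)$, which is supported in $B_\eps(x_1)$ and tends to $0$ in $L^2(\Dsf)$. Lax--Milgram gives $h_\eps\to h$ in $H^1(\Dsf)$. Moreover, $\Phi_\eps-\Phi$ vanishes outside $B_\eps(x_1)$, and there $|\nabla\Phi_\eps|\le C\eps^{1-d}$ and $|\nabla\Phi|\le C|x-x_1|^{1-d}$. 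Hence $\|\Phi_\eps-\Phi\|_{W^{1,p}(\Dsf)}\to0$ exactly for $p<\frac{d}{d-1}$. This gives the strong convergence without any regularity theory at the Dirichlet--Neumann interface. Uniqueness can then be understood in the duality sense, as you suggest.
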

\begin{proof}
    The first statement (a) is proved in \cite[Thm~2.7]{a_BAMAST_2023a}. The statement (b) follows since for $\eps>0$ small and $x_1,x_2\ne 0$ we have
    \begin{equation}
        \sigma_{\Omega_\eps(x_2)}(x_1)=\sigma_{\Omega}(x_1) 
    \end{equation}
    and therefore $d_{x_1} u_{\Omega_\eps(x_2)} = d_{x_1} u_\Omega$. 
\end{proof}

\subsection{Compound layer expansion of $u_\Omega$ for $\gamma\ge 0$}
We consider for an open set  $\Omega\subset \Dsf$, $x_1\in \Dsf\setminus\partial\Omega$ the function $u_\eps:= u_{\Omega_\eps(x_1)} \in H^1_{\Gamma}(\Dsf)$ which is a solution to:
\begin{cases2}{eq:state_fOmegaeps_gamma}
    -\Delta u_\eps + \gamma u_\eps  & = f_{\Omega_\eps(x_1)} &&\quad  \text{ in } \Dsf\\
    u_\eps & = g && \quad \text{ on } \Gamma \\
    \partial_n u_\eps & = h && \quad \text{ on } \Gamma_N. \\
\end{cases2}
Recall the abbreviation: 
\begin{equation}
    c_{\Omega,x_1}= \sigma_{\Omega}(x_1) (f_1-f_2).
\end{equation}
We recall the definition of the fundamental solution of the Laplace operator $-\Delta$ in dimension two and three is given for $x\ne 0$ by:
\begin{equation}\label{eq:laplace_fundamental}
    E(x)  := \left\{\begin{array}{cc}
    - \frac{1}{2\pi} \ln(|x|) & \text{ for } d=2 \\
     \frac{1}{4\pi} \frac{1}{|x|} & \text{ for } d = 3.
    \end{array}\right. 
\end{equation}
\begin{lemma}
    \begin{itemize}
        \item[(a)]
            Let $d\in \{2,3\}$. For a given $f\in L^2(B)$  the function
\begin{equation}\label{eq:Uf_formula}
    U_f(x):= \int_B f(y)E(x-y)\;dy, \quad x\in \VR^d,
\end{equation}
is in $C^1(\VR^d)$ and $H^2_{loc}(\VR^d)$. Moreover, it is a solution to the equation:
\begin{equation}
    \int_{\VR^d} \nabla U\cdot \nabla \varphi \;dx = \int_B f \varphi\;dx \quad \text{ for all } \varphi \in C^\infty_c(\VR^d). 
\end{equation}
\item[(b)]
For $d=2$, the function $U_f$ has the asymptotic behaviour
\begin{equation}
    U_f(x) = c\ln(|x|) + S^2(x) + S^3(x) + O(|x|^{-3}).
\end{equation}
For $d=3$, the function $U$ has the behaviour
\begin{equation}
    U_f(x) = c\frac{1}{|x|} + S^2(x) + S^3(x) + O(|x|^{-4}). 
\end{equation}
\end{itemize}
\end{lemma}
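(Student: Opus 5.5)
We treat (a) as a statement about Newtonian potentials of compactly supported $L^2$ densities, and (b) as a multipole expansion of the kernel $E(x-y)$ for $|x|$ large and $y\in B$.

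\emph{Part (a).} Extend $f$ by zero outside $B$, so $\tilde f\in L^2(\VR^d)$ has compact support and $U_f=E*\tilde f$.

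\begin{enumerate}
\item[(i)] Continuity of $U_f$ and $\nabla U_f$ comes from local integrability of the kernels. We have $E\in L^q_{loc}$ for every $q<\infty$ when $d=2$ and for $q<3$ when $d=3$. We also have $\nabla E(z)=c_d\, z/|z|^d$, which is $O(|z|^{1-d})$ and hence in $L^q_{loc}$ for $q<d/(d-1)$, in particular in $L^2_{loc}$ for $d=2,3$.
\item[(ii)] Differentiation under the integral gives $\nabla U_f=\nabla E*\tilde f$. Hölder's inequality with $\tilde f\in L^2$ and $\nabla E\in L^2(B_R)$, together with the dominated convergence theorem (or continuity of translation in $L^2$), shows that $U_f$ and $\nabla U_f$ are continuous. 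Hence $U_f\in C^1(\VR^d)$.
\item[(iii)] For the weak equation, take $\varphi\in C^\infty_c$ and use Fubini: $\int \nabla U_f\cdot\nabla\varphi\,dx=\int_B f(y)\big(\int \nabla E(x-y)\cdot\nabla\varphi(x)\,dx\big)dy$. The inner integral equals $\varphi(y)$, because $-\Delta E=\delta_0$ in the distributional sense; verify this by excising a small ball around $y$ and integrating by parts.
\item[(iv)] The $H^2_{loc}$ regularity follows from the Calderón–Zygmund estimate for the Newtonian potential: $\|\nabla^2(E*\tilde f)\|_{L^2}\le C\|\tilde f\|_{L^2}$. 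For $p=2$ this can be proved directly by Fourier transform, since $\widehat{\partial_{ij}U}=\xi_i\xi_j|\xi|^{-2}\hat{\tilde f}$. Alternatively, use interior elliptic regularity for $-\Delta U=\tilde f\in L^2$.
\end{enumerate}

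\emph{Part (b).} For $|x|\ge 2$ and $y\in B$, Taylor-expand $y\mapsto E(x-y)$ at $y=0$ to third order:
\[
E(x-y)=E(x)-\nabla E(x)\cdot y+\tfrac12\nabla^2E(x)[y]^2-\tfrac16\nabla^3E(x)[y]^3+R(x,y),
\]
with $|R(x,y)|\le C\sup_{|z|\ge |x|/2}|\nabla^4E(z)|\,|y|^4$. Since $\nabla^kE(z)=O(|z|^{-k})$ for $d=2$ ($k\ge1$) and $O(|z|^{-k-1})$ for $d=3$, the remainder is $O(|x|^{-4})$ in 2D and $O(|x|^{-5})$ in 3D.

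Integrating against $f$ over $B$ gives
\[
U_f(x)=\Big(\int_B f\Big)E(x)-\nabla E(x)\cdot\int_B f y\,dy+\tfrac12\nabla^2E(x):\int_B f\, y\otimes y\,dy+\cdots.
\]
The first term gives $c\ln|x|$ with $c=-\frac1{2\pi}\int_B f$, or $c/|x|$ with $c=\frac{1}{4\pi}\int_B f$.

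The paper has not defined $S^2,S^3$ in the statement, so I would define them through the following terms:
\begin{itemize}
\item $S^2(x):=-\nabla E(x)\cdot\int_B f y\,dy$, which is homogeneous of degree $-1$ (2D), resp.\ $-2$ (3D);
\item $S^3(x):=\tfrac12\nabla^2E(x):\int_B f\,y\otimes y\,dy$, which is homogeneous of degree $-2$, resp.\ $-3$.
\end{itemize}
The next (cubic) term is $O(|x|^{-3})$ in 2D, resp.\ $O(|x|^{-4})$ in 3D. Together with the remainder, this gives exactly the claimed error orders. Note that these terms are exact homogeneous harmonic functions, namely derivatives of $E$.

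\emph{Main obstacle.} The analytically nontrivial step is the $H^2_{loc}$ estimate in (a). I would handle it via the Fourier/Plancherel argument, plus a cutoff to pass from the global $\dot H^2$ bound to $H^2_{loc}$. The asymptotics in (b) are routine once the uniform Taylor remainder bound on $|x|\ge2$ is in place.
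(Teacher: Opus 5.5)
The paper states this lemma without proof (it is presented as standard), so there is no argument in the paper to compare your route against. Your step (iii) for the weak equation, step (iv) for $H^2_{loc}$, and the multipole argument for (b) are fine; reading $S^2,S^3$ as the dipole and quadrupole terms is a sensible interpretation.

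The gap is the $C^1$ claim in steps (i)--(ii). You assert $\nabla E\in L^2_{loc}$, but this is false. Since $|\nabla E(z)|\sim|z|^{1-d}$, it lies in $L^q_{loc}$ only for $q<d/(d-1)$, i.e.\ $q<2$ when $d=2$ and $q<3/2$ when $d=3$. Your own stated range therefore excludes $q=2$ in both dimensions; for instance $\int_{B}|z|^{-2}\,dz=\infty$ in $\VR^2$. Consequently, H\"older's inequality against $\tilde f\in L^2$ controls neither the size nor the modulus of continuity of $\nabla U_f=\nabla E*\tilde f$.

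This is not merely a flaw in the argument: under the hypothesis $f\in L^2(B)$ the $C^1$ claim is false, because $H^2_{loc}\not\hookrightarrow C^1$ for $d=2,3$. Take $u(x)=\chi(x)\,x_1\ln|\ln|x||$ with $\chi\in C^\infty_c(B_{1/2})$ and $\chi=1$ near $0$. Near $0$ its second derivatives are $O\big(|x|^{-1}|\ln|x||^{-1}\big)$, which is square integrable for $d=2,3$, so $u\in H^2(\VR^d)$. On the other hand, $\partial_1 u\sim\ln|\ln|x||$ is unbounded. Setting $f:=-\Delta u\in L^2(B)$, we get $U_f=E*(-\Delta u)=u\notin C^1$.

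What your argument does establish is continuity of $U_f$, since $E\in L^2_{loc}$ for $d=2,3$. It also yields $U_f\in C^1$ once $f\in L^p(B)$ with $p>d$: then $p'<d/(d-1)$, so $\nabla E\in L^{p'}_{loc}$, and your translation-continuity argument applies verbatim. This covers the paper's uses with $f=c_{\Omega,x_1}$ and, for $d=2$, with $f=\gamma(\Rki{1}{1}-\Uk{1})$, which has only a logarithmic singularity. In $d=3$, however, that $f$ behaves like $|y|^{-1}$, and $U_f$ equals $-\frac{\gamma c_{\Omega,x_1}}{6}|x|$ plus a function smooth near $0$. So the extra integrability hypothesis is genuinely needed, not just a technicality of the proof.
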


The functions $\Uk{1},\Uk{2},\vk{1}\vk{2},\Rki{i}{j}$ that we are going to introduce in the following all depend on the point $x_1$ and/or on the open set $\Omega\subset \Dsf$. For notational simplicity we omit the dependence of the shape $\Omega$ and $x_1$.

\begin{definition}
 For $\Omega\subset \Dsf$ open and $x_1\in \Dsf\setminus\partial \Omega$, we define $\Uk{1}:= U_f$ with $f:= c_{\Omega,x_1}$, that is, 
\begin{equation}
    \Uk{1}(x):= c_{\Omega,x_1}\int_B E(x-y)\;dy,
\end{equation}
which solves the equation $-\Delta \Uk{1} = c_{\Omega,x_1} \chi_B$ in $\VR^d$ or equivalently
\begin{equation}
    \int_{\VR^d} \nabla \Uk{1} \cdot \nabla \varphi \;dx = c_{\Omega,x_1} \int_B  \varphi\;dx \quad \text{ for all } \varphi \in C^\infty_c(\VR^d). 
\end{equation}
\end{definition}

\begin{remark}\label{rem:U11_explicit}
    \begin{itemize}
        \item[(a)] The solution $\Uk{1}$ is given by (see page 581 in \cite{a_BAGAST_2021b}) for $d=2$ by 
\begin{equation}\label{eq:Uk1_explicit_2d}
    \Uk{1}(x) =  c_{\Omega,x_1} \begin{cases}
        - \frac{1}{4}(|x|^2-1) \quad & \text{ for } x\in B, \\
        - \frac{1}{2} \ln(|x|) \quad & \text{ for } x\in \VR^2\setminus\overline{B},
    \end{cases}
\end{equation}
and  in dimension $d=3$  by
\begin{equation}\label{eq:Uk1_explicit_3d}
    \Uk{1}(x) =   c_{\Omega,x_1}\begin{cases}
    - \frac{1}{6}(|x|^2-3) \quad & \text{ for } x\in B, \\
        \frac{1}{3} \frac{1}{|x|} \quad & \text{ for } x\in \VR^3\setminus\overline{B}.
    \end{cases}
\end{equation}
    \end{itemize}
\end{remark}

\begin{definition}\label{def:R11_explicit}
  For $\Omega\subset \Dsf$ open and $x_1\in \Dsf\setminus\partial \Omega$,  we define for $x\in \VR^d\setminus\{0\}$: 
\begin{equation}
    \Rki{1}{1}(x):=  c_{\Omega,x_1} \begin{cases} 
        -\frac{1}{2} \ln(|x|) & \quad \text{ for } d=2\\
        \frac{1}{3} \frac{1}{|x|}  & \quad \text{ for } d=3.
    \end{cases}
\end{equation}
\end{definition}

\begin{definition}
  For $\Omega\subset \Dsf$ open and $x_1\in \Dsf\setminus\partial \Omega$, we define $\Uk{2}:= U_f$ with $f:= \gamma (\Rki{1}{1}(y)-\Uk{1}(y))$, that is, 
    \begin{equation}
        \Uk{2}(x)= \int_B \gamma (\Rki{1}{1}(y)-\Uk{1}(y)) E(x-y)\;dy, \quad x \in \VR^d.
    \end{equation}
We have for $|x|\to\infty$:
\begin{equation}
    \Uk{2}(x) = \Rki{2}{1}(x) + \Rki{2}{2}(x) + \begin{cases}
        O(|x|^{-2}) & \text{ for } d=2\\
        O(|x|^{-3}) & \text{ for } d=3
    \end{cases}
\end{equation}
\end{definition}
\begin{remark}
    For $d=2$ we have that $\Rki{2}{1}(x)=c\ln(|x|)$ with some constant $c\in \VR$ and in $d=3$ we have $\Rki{2}{1}(x) = c\frac{1}{|x|}$ with another constant $c\in \VR$. We will discuss the constant in more detail in Subsection~\ref{subsec:explicit_Rki}. 
\end{remark}
\begin{remark}
With an abuse of notation we set for scalars $\lambda\ne0$,  $\Rki{i}{j}(\lambda):= \Rki{i}{j}(\lambda v),v\in \VR^d,|v|=1$.  Then for $d=2$ we have $\Rki{1}{1}(\eps x) = \Rki{1}{1}(x) + \Rki{1}{1}(\eps)$ and for $d=3$ we have $\Rki{1}{1}(\eps x) = \eps^{-1} \Rki{1}{1}(x)$.
\end{remark}

\begin{definition}
Let $\Omega\subset \Dsf$ be open and $x_1\in \Dsf\setminus\partial \Omega$.
    \begin{itemize}
        \item[(a)] We define $\vk{1}\in H^1(\Dsf)$ as the weak solution to 
    \begin{cases2}{eq:vk1_omega}
        -\Delta \vk{1} + \gamma \vk{1} & = -\gamma \Rki{1}{1}(x-x_1) && \quad \text{ in } \Dsf \\
        \vk{1}& = -\Rki{1}{1}(x-x_1) && \quad \text{ on } \Gamma\\
        \partial_n \vk{1} & = -\partial_n \Rki{1}{1}(x-x_1) && \quad \text{ on } \Gamma_N. 
    \end{cases2}
\item[(b)] 
    We define $\vk{2} \in H^1(\Dsf)$ as the weak solution to 
    \begin{cases2}{eq:vk2_omega}
        -\Delta \vk{2} + \gamma \vk{2} & = -\gamma \Rki{2}{1}(x-x_1)  && \quad \text{ in } \Dsf \\
        \vk{2}& = -\Rki{2}{1}(x-x_1) && \quad \text{ on } \Gamma\\
        \partial_n \vk{2} & = -\partial_{n} \Rki{2}{1}(x-x_1)  && \quad \text{ on } \Gamma_N. 
    \end{cases2}
\end{itemize}

\end{definition}

\begin{definition}
    We define the change of variables $T_\eps(x):= \eps x + x_1$. For a given set $A\subset \VR^d$ we set $A_\eps:= T_\eps(A)$ and $A_\eps^{-1}:= T_\eps^{-1}(A)$. 
\end{definition}

\begin{remark}\label{rem:normal_eps_Omega}
    For the bounded domain $\Dsf$ with  $C^1$-boundary $\partial \Dsf$ we denote by $n$ the unit normal along $\partial \Dsf$. According to \cite[Thm. 4.4, page 488]{b_DEZO_2011a} the unit normal vector field $n_\eps$ along $\partial \Dsf_\eps:= T_\eps^{-1}(\Dsf)$ satisfies:
    \begin{equation}
        n_\eps \circ T_\eps^{-1} = \frac{\partial(T_\eps^{-1})n}{|\partial(T_\eps^{-1})n|}.
    \end{equation}
Since $\partial T_\eps^{-1} = \eps^{-1} I$ this leads to $n_\eps = n\circ T_\eps$. 
\end{remark}

\begin{definition}\label{def:Ueps_2d_3d}
    Let $\Omega\subset \Dsf$ be open, $x_1\in \Dsf\setminus\partial \Omega$ and $T_\eps(x) = x_1 + \eps x, \eps\ge 0$.  
    \begin{itemize}
        \item[(a)] For $d=2$ and $\eps >0$, we define
            \begin{align}
                \Uk{1}_\eps &:= \frac{(u_\eps-u_\Omega)\circ T_\eps}{\eps^2}\\\
                \Uk{2}_\eps &:= \frac{\Uk{1}_\eps - (\Uk{1} + \vk{1}\circ T_\eps + \Rki{1}{1}(\eps))}{\eps}\\
                \Uk{3}_\eps &:= \frac{\Uk{2}_\eps - \eps (\Uk{2} + \vk{2}\circ T_\eps + \Rki{2}{1}(\eps))}{\eps}
            \end{align}
        \item[(b)] For $d=3$ and $\eps>0$, we define
 \begin{align}
                \Uk{1}_\eps &:= \frac{(u_\eps-u_\Omega)\circ T_\eps}{\eps^2}\\\
                \Uk{2}_\eps &:= \frac{\Uk{1}_\eps - (\Uk{1} _ \eps + \eps \vk{1}\circ T_\eps) }{\eps}\\
                \Uk{3}_\eps &:= \frac{\Uk{2}_\eps - \eps (\Uk{2} + \eps \vk{2}\circ T_\eps)}{\eps}
            \end{align}
    \end{itemize}

\end{definition}

\begin{lemma}\label{lem:Uf_Deps}
    For $f\in L^2(B)$ let $U_f$ be defined by \eqref{eq:Uf_formula}. Then 
    \begin{equation}
        \int_{\Dsf_\eps^{-1}} \nabla U_f\cdot \nabla \varphi + \gamma U_f \varphi \;dx = \int_B f\varphi \;dx + \int_{(\Gamma_N)_\eps^{-1}} \partial_{n_\eps} U_f\varphi\;ds + \int_{\Dsf_\eps^{-1}} \gamma U_f \varphi \;dx
    \end{equation}
    for all $\varphi \in H^1_{\Gamma_\eps^{-1}}(\Dsf_\eps^{-1})$. Here $n_\eps$ denotes the outward pointing normal vector field along $\partial \Dsf_\eps^{-1}$, which is given by $n_\eps = n\circ T_\eps$ (see Remark~\ref{rem:normal_eps_Omega}), where $n$ denotes the outward pointing unit vector field along $\partial \Dsf$.  
\end{lemma}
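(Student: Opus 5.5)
The identity in Lemma~\ref{lem:Uf_Deps} is essentially Green's formula on the inflated domain $\Dsf_\eps^{-1}$, combined with the distributional equation $-\Delta U_f = f\chi_B$ in $\VR^d$ from part (a) of the lemma on $U_f$. The $\gamma$-terms play no real role: the term $\int_{\Dsf_\eps^{-1}}\gamma U_f\varphi\,dx$ appears on both sides and can be cancelled. So the claim reduces to
\begin{equation*}
\int_{\Dsf_\eps^{-1}} \nabla U_f\cdot\nabla\varphi\;dx = \int_B f\varphi\;dx + \int_{(\Gamma_N)_\eps^{-1}} \partial_{n_\eps}U_f\,\varphi\;ds
\end{equation*}
for all $\varphi\in H^1_{\Gamma_\eps^{-1}}(\Dsf_\eps^{-1})$. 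For $\eps$ small we have $x_1\in\Dsf$ interior, so $B\subset\subset \Dsf_\eps^{-1}$. I will use this throughout, in particular so that $\int_{\Dsf_\eps^{-1}} f\chi_B\varphi = \int_B f\varphi$.

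\textbf{Regularity and the strong equation.} Since $U_f\in H^2_{loc}(\VR^d)\cap C^1(\VR^d)$ and $\Dsf_\eps^{-1}$ is bounded, $U_f|_{\Dsf_\eps^{-1}}\in H^2(\Dsf_\eps^{-1})$. The weak identity of part (a), tested with all $C_c^\infty(\VR^d)$ functions, gives $-\Delta U_f = f\chi_B$ a.e. in $\VR^d$, as an $L^2_{loc}$ function. Moreover $U_f$ is harmonic outside $\overline B$, hence smooth near $\partial\Dsf_\eps^{-1}$.

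\textbf{Integration by parts.} The boundary $\partial\Dsf_\eps^{-1}=T_\eps^{-1}(\partial\Dsf)$ is $C^1$. For $H^2$ functions on a $C^1$ (hence Lipschitz) bounded domain, the Gauss--Green formula
\begin{equation*}
\int_{\Dsf_\eps^{-1}}\nabla U_f\cdot\nabla\varphi\;dx = -\int_{\Dsf_\eps^{-1}}\Delta U_f\,\varphi\;dx + \int_{\partial\Dsf_\eps^{-1}}\partial_{n_\eps}U_f\,\varphi\;ds
\end{equation*}
holds for $\varphi\in H^1(\Dsf_\eps^{-1})$. I will prove it first for $\varphi\in C^\infty(\overline{\Dsf_\eps^{-1}})$ and then pass to general $\varphi$ by density and continuity of the trace $H^1\to L^2(\partial\Dsf_\eps^{-1})$; note that $\nabla U_f$ has an $L^2$ trace, even a continuous one. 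Substituting $-\Delta U_f=f\chi_B$ gives the volume term $\int_B f\varphi\,dx$.

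\textbf{Splitting the boundary.} Write $\partial\Dsf_\eps^{-1} = \overline{\Gamma_\eps^{-1}}\cup\overline{(\Gamma_N)_\eps^{-1}}$. The trace of $\varphi$ vanishes on $\Gamma_\eps^{-1}$, so only the $(\Gamma_N)_\eps^{-1}$ part survives. Remark~\ref{rem:normal_eps_Omega} identifies the normal as $n_\eps = n\circ T_\eps$. Adding $\int\gamma U_f\varphi$ to both sides then yields the stated identity.

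\textbf{Main point of care.} The only delicate step is justifying the Green formula with merely $C^1$ boundary and $H^2$/$H^1$ regularity. I would avoid any issue by noting that $U_f$ is smooth (harmonic) in a neighbourhood of $\partial\Dsf_\eps^{-1}$. One can then split $\varphi=\psi\varphi+(1-\psi)\varphi$ with a cutoff $\psi$ equal to $1$ near $\overline B$ and compactly supported in $\Dsf_\eps^{-1}$. The interior part is handled by the weak equation of part (a); the boundary part involves only the smooth function $U_f$ and the standard divergence theorem on Lipschitz domains.
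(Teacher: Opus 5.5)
Your proposal is correct and follows the paper's own argument: multiply $-\Delta U_f = f\chi_B$ by $\varphi$, integrate by parts on $\Dsf_\eps^{-1}$, drop the $\Gamma_\eps^{-1}$ boundary term because the trace of $\varphi$ vanishes there, and add $\int_{\Dsf_\eps^{-1}}\gamma U_f\varphi\,dx$ to both sides. Your extra justifications fill in details the paper leaves implicit: that $B\subset\subset\Dsf_\eps^{-1}$ for small $\eps$, that $U_f$ is harmonic near $\partial\Dsf_\eps^{-1}$, and the cutoff and density argument for Green's formula on a $C^1$ domain.
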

\begin{proof}
    This follows by multiplying $-\Delta U_f = f\chi_B$ with $\varphi \in H^1_{\Gamma_\eps^{-1}}(\Dsf_\eps^{-1})$ and partial integration and then adding the term $\int_{\Dsf_\eps^{-1}}\gamma U_f \varphi \;dx$ on both sides.
\end{proof}

\begin{lemma}\label{lem:bound_rk2_Uk2}
Let  $\Omega\subset \Dsf$ be open and  $x_1\in \Dsf\setminus\partial \Omega$. Then are constants $C_1,C_2,C_3>0$, such that for $\eps >0$ small:
\begin{equation}\label{eq:Rki21_U2}
    \|\Rki{2}{1}-\Uk{2}\|_{L^2(\Dsf_\eps^{-1})} \le \begin{cases}
       C_1 + C_2 \sqrt{-\ln(\eps)} & \text{ for } d=2 \\
       C_3                  &\text{ for } d=3.
    \end{cases}
\end{equation}
The constants $C_1,C_2,C_3$ can be chosen to be independent of $\Omega$. 
\end{lemma}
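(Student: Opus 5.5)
The idea is to control the pointwise decay of $\Rki{2}{1}-\Uk{2}$ outside a fixed ball and then integrate this over the inflated domain $\Dsf_\eps^{-1}$. The inflated domain is contained in a ball $B_{R/\eps}(0)$, with $R$ the diameter of $\Dsf$, and this $R$ does not depend on $\Omega$.

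\textbf{Decay of the difference.} The definition of $\Uk{2}$ gives the far-field expansion
\[
\Uk{2}(x)=\Rki{2}{1}(x)+\Rki{2}{2}(x)+O(|x|^{-2})\ (d=2), \qquad \Uk{2}(x)=\Rki{2}{1}(x)+\Rki{2}{2}(x)+O(|x|^{-3})\ (d=3).
\]
Hence for $|x|\ge 2$,
\[
|\Rki{2}{1}(x)-\Uk{2}(x)|\le |\Rki{2}{2}(x)|+C|x|^{-d}.
\]
The term $\Rki{2}{2}$ is the next term of the multipole expansion of $U_f$ with $f=\gamma(\Rki{1}{1}-\Uk{1})$. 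This $f$ is radial, since both $\Rki{1}{1}$ and $\Uk{1}$ are radial by Remark~\ref{rem:U11_explicit} and Definition~\ref{def:R11_explicit}. So $U_f$ coincides with $c\,E$ outside $B$, and the dipole term vanishes. To be safe, I will only use the cruder bound
\[
|\Rki{2}{2}(x)|\le C|x|^{-(d-1)},
\]
which follows from a Taylor expansion of $E(x-y)$ in $y$ for $|y|\le 1$. In all cases the constant is of the form $C=C(\gamma,|c_{\Omega,x_1}|)$, and since $|c_{\Omega,x_1}|=|f_1-f_2|$ it is independent of $\Omega$.

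\textbf{Near field.} On $B_2$ the function $\Uk{2}$ is continuous, because it lies in $C^1$ by the lemma on $U_f$. In $d=2$, $\Rki{2}{1}=c\ln|x|$ is square integrable near $0$, and in $d=3$, $\Rki{2}{1}=c|x|^{-1}$ is square integrable as well. So $\|\Rki{2}{1}-\Uk{2}\|_{L^2(B_2)}\le C$, again with a constant depending only on $\gamma$ and $|f_1-f_2|$.

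\textbf{Far field.} Integrating in polar coordinates over $2\le|x|\le R/\eps$:
\begin{itemize}
\item For $d=2$ the bound is $|x|^{-1}$, giving $\int_2^{R/\eps} r^{-2}\,r\,dr=\ln(R/(2\eps))$. Taking the square root yields $C_1+C_2\sqrt{-\ln\eps}$.
\item For $d=3$ the bound is $|x|^{-2}$, giving $\int_2^\infty r^{-4}r^2\,dr<\infty$, hence the constant $C_3$.
\end{itemize}
If the radial symmetry argument is used instead, so that $\Rki{2}{2}$ vanishes, the bound for $d=2$ even improves to a constant. The stated estimate remains true either way.

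\textbf{Main obstacle.} The delicate step is justifying the pointwise expansion of $U_f$ uniformly, with explicit dependence on $\|f\|_{L^1(B)}$. This matters because $f$ contains $\Rki{1}{1}$, which has a logarithmic singularity in $d=2$ and a $|x|^{-1}$ singularity in $d=3$. These singularities are integrable, so $f\in L^1(B)\cap L^2(B)$. The Taylor remainder of $E(x-y)$ for $|x|\ge2$, $|y|\le1$ is bounded by $C|x|^{-(d-1)}$ (in $d=2$, by $C|x|^{-1}$ after subtracting $c\ln|x|$). This gives the claim with $\Omega$-independent constants.
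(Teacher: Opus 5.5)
Your proof is correct and follows essentially the same route as the paper: a pointwise bound $|\Rki{2}{1}-\Uk{2}|\le C|x|^{-(d-1)}$ outside a fixed ball, a bounded near-field contribution, and polar-coordinate integration up to radius $O(\eps^{-1})$, with $\Omega$-independence coming from the dependence on $\Omega$ only through $|c_{\Omega,x_1}|=|f_1-f_2|$. Your radial-symmetry remark is also correct: since $\gamma(\Rki{1}{1}-\Uk{1})$ is radial, $\Uk{2}=\Rki{2}{1}$ exactly on $\VR^d\setminus B$, so the left-hand side is in fact bounded by a constant even for $d=2$.
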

\begin{proof}
    Let $d\in \{2,3\}$. Since $\Rki{2}{1}(x)-\Uk{2}(x) = O(\frac{1}{|x|^{d-1}})$, we find a number $R>0$, such that 
    \begin{equation}
        |\Rki{2}{1}(x)-\Uk{2}(x)|\le C\frac{1}{|x|^{d-1}} \quad \text{ for } |x|>R. 
    \end{equation}
So now we split:
\begin{equation}
    \|\Rki{2}{1}-\Uk{2}\|_{L^2(\Dsf_\eps^{-1})}^2 = \|\Rki{2}{1}-\Uk{2}\|_{L^2(B_R(x_1))}^2 + \|\Rki{2}{1}-\Uk{2}\|_{L^2(\Dsf_\eps^{-1}\setminus B_R(0))}^2. 
\end{equation}
The first term is bounded for fixed $R$ and the second we can estimate further:
\begin{equation}
    \|\Rki{2}{1}-\Uk{2}\|_{L^2(\Dsf_\eps^{-1}\setminus B_R(0))}^2 \le C \|1/|x|^{d-1}\|_{L^2(\Dsf_\eps^{-1}\setminus B_R(0))}^2. 
\end{equation}
Now for some $c>0$ we have $\Dsf_\eps^{-1}\subset B_{c\eps^{-1}}(0)$ for all $\eps>0$, so the last term on the previous inequality we can further estimate 
\begin{equation}
    \|1/|x|^{d-1}\|_{L^2(\Dsf_\eps^{-1}\setminus B_R(0))}^2 \le \|1/|x|^{d-1}\|_{L^2(B_{c\eps^{-1}}(0)\setminus B_R(0))}^2.
\end{equation}
The last integral we can evaluate with polar/spherical coordinates:
\begin{equation}
    \|1/|x|^{d-1}\|_{L^2(B_{c\eps^{-1}}(0)\setminus B_R(0))}^2 =  \begin{cases}
        2\pi \int_{R}^{c\eps^{-1}} \frac{1}{r}\;dr = 2\pi (\ln(c\eps^{-1}) - \ln(R)) & \text{ for } d=2\\
        4\pi  \int_{R}^{c\eps^{-1}} \frac{1}{r^2} \;dr= 4\pi(c^{-1}\eps - R^{-1}) & \text{ for } d=3.
    \end{cases}
\end{equation}
Combining the previous estimates we obtain \eqref{eq:Rki21_U2} with suitable constants $C_1,C_2>0$. They can be chosen independently of $\Omega$ since $\Rki{2}{1},\Uk{2}$ depend on $\Omega$ only via $c_{\Omega,x_1} = (f_1-f_2)\sigma_\Omega(x_1)$ and $|\sigma_\Omega(x_1)|=1$. This concludes the proof.  
\end{proof}

\begin{theorem}\label{thm:remainder_estimate_Deps}
    Let $\Omega\subset \Dsf$ be open and $x_1\in \Dsf\setminus\partial \Omega$. 
    \begin{itemize}
        \item[(a)] There is a constant $C>0$, such that for $\eps >0$:
            \begin{equation}\label{eq:estimate_Uk3eps}
                \|\nabla \Uk{3}_\eps\|_{L^2(\Dsf_\eps^{-1})} + \|\eps \Uk{3}_\eps\|_{L^2(\Dsf_\eps^{-1})} \le \begin{cases}
                     C\eps (-\ln(\eps))^{1/2}) & \text{ for } d=2\\
                     C\eps & \text{ for } d=3
                \end{cases} 
            \end{equation}
            The constant $C$ is independent of $\Omega$. 
        \item[(b)] If $\gamma=0$ and $\Gamma\ne \emptyset$, then $\Uk{2}_\eps=0$, that is,
            \begin{equation}\label{eq:Ueps_explicit}
                \Uk{1}_\eps = 
                \begin{cases}
                    \Uk{1} + \vk{2}\circ T_\eps + \Rki{1}{1}(\eps) & \text{ for } d=2\\
                    \Uk{1} + \eps\vk{2}\circ T_\eps & \text{ for } d=3. 
                \end{cases}
            \end{equation}
    \end{itemize}

\end{theorem}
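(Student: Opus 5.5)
Following the compound layer method of \cite{a_BAGAST_2021b,a_BAST_2021a}, we derive a variational equation for the corrector $\Uk{3}_\eps$ on the inflated domain $\Dsf_\eps^{-1}$. We then test this equation with $\Uk{3}_\eps$ and bound every right-hand side term by the scaling inequalities of Lemma~\ref{lem:scaling_inequality} and the decay estimates of Lemma~\ref{lma:remainder_est}.

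\emph{Step 1 (equation for $\Uk{1}_\eps$).} Subtract the weak forms of \eqref{eq:state_fOmegaeps_gamma} and \eqref{eq:state_fOmega}. Since $f_{\Omega_\eps(x_1)}-f_\Omega = c_{\Omega,x_1}\chi_{B_\eps(x_1)}$, the change of variables $T_\eps$ gives
\begin{equation*}
\int_{\Dsf_\eps^{-1}} \nabla \Uk{1}_\eps\cdot\nabla\varphi + \eps^2\gamma \Uk{1}_\eps\varphi\;dx = c_{\Omega,x_1}\int_B\varphi\;dx
\end{equation*}
for all $\varphi\in H^1_{\Gamma_\eps^{-1}}(\Dsf_\eps^{-1})$. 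Here $\Uk{1}_\eps=0$ on $\Gamma_\eps^{-1}$, and the Neumann datum vanishes on $(\Gamma_N)_\eps^{-1}$.

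\emph{Step 2 (peeling off the correctors).} Apply Lemma~\ref{lem:Uf_Deps} to $\Uk{1}$ and to $\Uk{2}$. Then use the rescaled equations for $\vk{1}\circ T_\eps$ and $\vk{2}\circ T_\eps$ from \eqref{eq:vk1_omega} and \eqref{eq:vk2_omega}; rescaling multiplies the zero-order term by $\eps^2$. In $d=2$ we also use the identity $\Rki{1}{1}(\eps x)=\Rki{1}{1}(x)+\Rki{1}{1}(\eps)$. The point of the construction is the following cancellations:
\begin{itemize}
\item the source $c_{\Omega,x_1}\chi_B$ is absorbed by $\Uk{1}$;
\item the leading far-field $\Rki{1}{1}$ on $\Gamma_\eps^{-1}$ and on $(\Gamma_N)_\eps^{-1}$ is absorbed by $\vk{1}\circ T_\eps$;
\item the zero-order mismatch $\gamma(\Rki{1}{1}-\Uk{1})$ on $B$ is absorbed by $\Uk{2}$;
\item the far-field $\Rki{2}{1}$ of $\Uk{2}$ is absorbed by $\vk{2}\circ T_\eps$.
\end{itemize}
We expect $\Uk{3}_\eps$ to satisfy
\begin{equation*}
\int_{\Dsf_\eps^{-1}} \nabla \Uk{3}_\eps\cdot\nabla\varphi + \eps^2\gamma\Uk{3}_\eps\varphi\;dx = F_\eps(\varphi),
\end{equation*}
with a nonzero Dirichlet trace on $\Gamma_\eps^{-1}$ given by the higher order far-field remainder $\Uk{2}-\Rki{2}{1}$. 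The functional $F_\eps$ consists of three parts:
\begin{itemize}
\item a volume term $\eps\gamma\int_{\Dsf_\eps^{-1}}(\Rki{2}{1}-\Uk{2})\varphi\,dx$;
\item a volume term coming from $\gamma(\Rki{1}{1}-\Uk{1})$ outside $B$, which vanishes because $\Uk{1}=\Rki{1}{1}$ off $B$ by Remark~\ref{rem:U11_explicit};
\item Neumann boundary integrals of $\partial_{n_\eps}(\Uk{2}-\Rki{2}{1})$ over $(\Gamma_N)_\eps^{-1}$.
\end{itemize}

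\emph{Step 3 (energy estimate).} Lift the Dirichlet trace with the extension operator of Lemma~\ref{lem:scaling_inequality}(e), test with $\Uk{3}_\eps$ minus that lift, and use the coercivity of the scaled norm $\|\cdot\|_\eps$, given by Lemma~\ref{lem:scaling_inequality}(f) when $\Gamma\neq\emptyset$ or by $\gamma>0$ otherwise. The terms are bounded as follows.
\begin{itemize}
\item The volume term: by Cauchy--Schwarz it is at most $\eps\gamma\|\Rki{2}{1}-\Uk{2}\|_{L^2(\Dsf_\eps^{-1})}\|\varphi\|_{L^2}$. Lemma~\ref{lem:bound_rk2_Uk2} bounds the first factor by $C\sqrt{-\ln\eps}$ in $d=2$ and by $C$ in $d=3$, and $\eps\|\varphi\|_{L^2}\le\|\varphi\|_\eps$.
\item The boundary and trace terms: Lemma~\ref{lma:remainder_est} with $m=d-1$ applies to $\Uk{2}-\Rki{2}{1}$ (decay $O(|x|^{-(d-1)})$ in $d=2,3$). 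Combined with Lemma~\ref{lem:scaling_inequality}(d), these terms give powers $\eps^{\frac{2m+1-d}{2}+\frac12}\cdot\eps^{-1/2}$ and better, that is, at least $O(\eps)$.
\end{itemize}
Together these give $\|\Uk{3}_\eps\|_\eps\le C\eps\sqrt{-\ln\eps}$ in $d=2$ and $\le C\eps$ in $d=3$, which is \eqref{eq:estimate_Uk3eps}. The constants are uniform in $\Omega$ because every object depends on $\Omega$ only through $c_{\Omega,x_1}$, and $|c_{\Omega,x_1}|=|f_1-f_2|$.

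\emph{Part (b).} When $\gamma=0$, the sources of $\Uk{2}$ and $\vk{2}$ vanish, so $\Uk{2}=\vk{2}=0$ and $\Rki{2}{1}=0$. The function $\Uk{1}+\vk{1}\circ T_\eps+\Rki{1}{1}(\eps)$ (in $d=3$, $\Uk{1}+\eps\vk{1}\circ T_\eps$) then satisfies exactly the equation of Step 1 with the same boundary data. Uniqueness, which holds since $\Gamma\ne\emptyset$, gives equality, hence $\Uk{2}_\eps=0$. The displayed formula should then presumably read $\vk{1}$ in place of $\vk{2}$.

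\textbf{Main obstacle.} The delicate step is Step 2, in particular the bookkeeping of the logarithmic term $\Rki{1}{1}(\eps)$ in $d=2$ and its interaction with the $\eps^2\gamma$ zero-order term. This interaction produces the constant-times-$\gamma$ contributions that $\Uk{2}$ must absorb. A secondary risk is that these contributions leave $F_\eps$ with an extra $\eps^2\ln\eps$ term; we would check that it is dominated by $\eps\sqrt{-\ln\eps}$.
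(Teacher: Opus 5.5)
Your route is the paper's. You set up the variational problem for the corrector on $\Dsf_\eps^{-1}$, lift its Dirichlet trace with $Z_{\Gamma_\eps^{-1}}$, test, and combine Lemma~\ref{lem:scaling_inequality}(d)--(f) with Lemma~\ref{lem:bound_rk2_Uk2} and Lemma~\ref{lma:remainder_est} for $m=d-1$. Working with $\Uk{3}_\eps$ rather than $\eps\Uk{3}_\eps$ is immaterial, part (b) is fine, and you are right that $\vk{2}$ in \eqref{eq:Ueps_explicit} should read $\vk{1}$.

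The gap is in the power counting, which is the entire content of (a). The volume term in the equation for $\Uk{3}_\eps$ is
\[
\eps^{2}\gamma\int_{\Dsf_\eps^{-1}}(\Rki{2}{1}-\Uk{2})\varphi\,dx,
\]
not $\eps\gamma\int_{\Dsf_\eps^{-1}}(\Rki{2}{1}-\Uk{2})\varphi\,dx$. In the scaled problem, $\Uk{2}$ (through Lemma~\ref{lem:Uf_Deps}) and $\vk{2}\circ T_\eps+\Rki{2}{1}(\eps)$ leave zero-order mismatches $\eps^2\gamma\Uk{2}$ and $\eps^2\gamma\Rki{2}{1}$. These are multiplied by the factor $\eps$ in front of $\Uk{2}+\vk{2}\circ T_\eps+\Rki{2}{1}(\eps)$ in Definition~\ref{def:Ueps_2d_3d} and then divided by the outer $\eps$; accordingly, the paper's equation for $\eps\Uk{3}_\eps$ carries $\eps^3$. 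With your coefficient, your own Cauchy--Schwarz step gives $\gamma\|\Rki{2}{1}-\Uk{2}\|_{L^2(\Dsf_\eps^{-1})}\,\eps\|\varphi\|_{L^2(\Dsf_\eps^{-1})}\le C\sqrt{-\ln\eps}\,\|\varphi\|_\eps$. That yields only $\|\Uk{3}_\eps\|_\eps\le C\sqrt{-\ln\eps}$ for $d=2$ (and $\le C$ for $d=3$), a full power of $\eps$ short. So ``Together these give \ldots'' does not follow from what precedes it. With $\eps^2\gamma$ the same step gives $C\eps\sqrt{-\ln\eps}\,\|\varphi\|_\eps$ (resp.\ $C\eps\|\varphi\|_\eps$), which is the claim.

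The exponent you display for the boundary terms has the same kind of slip. For $m=d-1$, $\eps^{\frac{2m+1-d}{2}+\frac12}\cdot\eps^{-1/2}=\eps^{\frac{d-1}{2}}$, which is only $\eps^{1/2}$ for $d=2$. The correct count is as follows:
\begin{itemize}
\item the Neumann term contributes $\eps^{-1/2}\|\partial_{n_\eps}(\Rki{2}{1}-\Uk{2})\|_{L^2((\Gamma_N)_\eps^{-1})}\le C\eps^{\frac{2m+1-d}{2}+\frac12}$;
\item the lift contributes $\eps^{1/2}\|\Rki{2}{1}-\Uk{2}\|_{L^2(\Gamma_\eps^{-1})}+|\Rki{2}{1}-\Uk{2}|_{H^{1/2}(\Gamma_\eps^{-1})}\le C\eps^{\frac{2m+1-d}{2}+\frac12}$, with no extra trace factor.
\end{itemize}
Both are $\eps^{d/2}$, i.e.\ $O(\eps)$ for $d=2$ and $O(\eps^{3/2})$ for $d=3$, as needed.

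Finally, the ``secondary risk'' you flag does not occur, and it matters that it does not. A leftover $\eps^2\ln(\eps)\gamma\int_{\Dsf_\eps^{-1}}\varphi\,dx$ would not be small, since $|\int_{\Dsf_\eps^{-1}}\varphi\,dx|$ is in general only bounded by $C\eps^{-d/2-1}\|\varphi\|_\eps$. Insert $\Rki{1}{1}(\eps x)=\Rki{1}{1}(x)+\Rki{1}{1}(\eps)$ into the source $-\gamma\Rki{1}{1}(\cdot-x_1)$ of \eqref{eq:vk1_omega}. The rescaled equation then holds exactly for $\vk{1}\circ T_\eps+\Rki{1}{1}(\eps)$, with zero-order term $\eps^2\gamma(\vk{1}\circ T_\eps+\Rki{1}{1}(\eps))$ on the left and right-hand side $-\eps^2\gamma\int_{\Dsf_\eps^{-1}}\Rki{1}{1}\varphi\,dx-\int_{(\Gamma_N)_\eps^{-1}}\partial_{n_\eps}\Rki{1}{1}\varphi\,ds$. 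So the constant is absorbed without remainder, and likewise $\Rki{2}{1}(\eps)$.
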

\begin{proof}
    \emph{ad (a):} We assume for now that $d=2$. The following computation remains valid for $d=3$ where $\Rki{i}{1}(\eps):=0$, $i=1,2$. 
    Writing the equation for $u_\eps-u_0$ gives
\begin{equation}
    \int_\Dsf \nabla (u_\eps-u)\cdot\nabla \varphi + \gamma (u_\eps-u)\varphi\;dx = c_{\Omega,x_1}\int_{B_\eps(x_1)} \varphi\;dx, \quad \text{ for } \varphi\in H_\Gamma^1(\Dsf).
\end{equation}
So changing variables $y=T_\eps(x)$ gives
\begin{equation}
    \int_{\Dsf_\eps^{-1}} \nabla \Uk{1}_\eps\cdot\nabla \varphi + \eps^2 \gamma \Uk{1}_\eps \varphi\;dx = c_{\Omega,x_1} \int_B\varphi\;dx, \quad \text{ for } \varphi \in H^1_{\Gamma_\eps^{-1}}(\Dsf_\eps^{-1}).
\end{equation}
From Lemma~\ref{lem:Uf_Deps} with $f:= c_{\Omega,x_1}$ it follows that $\Uk{1}$ satisfies for  $\varphi \in H^1_{\Gamma_\eps^{-1}}(\Dsf_\eps^{-1})$:
\begin{equation}
    \int_{\Dsf_\eps^{-1}} \nabla \Uk{1}\cdot\nabla \varphi + \eps^2 \gamma \Uk{1}\varphi  \;dx = c_{\Omega,x_i} \int_B \varphi\;dx + \int_{\Dsf_\eps^{-1}} \eps^2 \gamma\Uk{1}\varphi\;dx  + \int_{(\Gamma_N)_\eps^{-1}}  \partial_{n_\eps} \Uk{1} \varphi\;ds. 
\end{equation}
The weak formulation for $\vk{1}$ reads: find $\vk{1}\in H^1(\Dsf)$, such that $\vk{1}=-\Rki{1}{1}(x-x_1)$ on $\Gamma$ and
\begin{equation}
    \int_\Dsf \nabla \vk{1}\cdot \nabla \varphi  + \gamma \vk{1} \varphi\;dx = -\int_\Dsf \gamma \Rki{1}{1}(x-x_1)\varphi \;dx -  \int_{\Gamma_N} \partial_n \Rki{1}{1}(x-x_1)\varphi \;ds.
\end{equation}
Therefore changing variables $y=T_\eps(x)$ and taking into account Lemma~\ref{lem:change_variables_eps} gives for $\varphi \in H^1_{\Gamma_\eps^{-1}}(\Dsf_\eps^{-1})$:
\begin{align}
    \begin{split}
    \int_{\Dsf_\eps^{-1}} \nabla (\vk{1}\circ T_\eps+\Rki{1}{1}(\eps)) \cdot \nabla \varphi  + & \eps^2 \gamma (\vk{1}\circ T_\eps + \Rki{1}{1}(\eps)) \varphi\;dx \\
                                                                                               & = -\eps^2\int_{\Dsf_\eps^{-1}} \gamma \Rki{1}{1}(x)\varphi \;dx -   \int_{(\Gamma_N)_\eps^{-1}} \partial_{n_\eps} \Rki{1}{1}(x)\varphi \;ds.
\end{split}
\end{align}
We note that on $\Gamma$ we have $\vk{1}(T_\eps(x)) +\Rki{1}{1}(\eps)= -\Rki{1}{1}(x-x_1)$. Moreover, $\Rki{1}{1} = \Uk{1}$ on $\VR^2\setminus B$. Therefore for function $\Uk{2}_\eps = \eps^{-1}(\Uk{1}_\eps - \Uk{1} - \vk{1}\circ T_\eps - \Rki{1}{1}(\eps))\in H^1_{\Gamma_\eps^{-1}}(\Dsf_\eps^{-1})$ satisfies:
\begin{align}
    \int_{\Dsf_\eps^{-1}} \nabla  \Uk{2}_\eps\cdot \nabla \varphi + \eps^2 \Uk{2}_\eps\varphi\;dx = \eps \int_{B}\gamma(\Rki{1}{1}(x)-\Uk{1}(x))\varphi\;dx. 
\end{align}
From Lemma~\ref{lem:Uf_Deps} with $f:=  \gamma (\Rki{1}{1}-\Uk{1})$ it follows that $\Uk{2}$ satisfies for $\varphi \in H^1_{\Gamma_\eps^{-1}}(\Dsf_\eps^{-1})$
\begin{align}
    \begin{split}
    \int_{\Dsf_\eps^{-1}} \nabla \Uk{2}\cdot\nabla \varphi + \eps^2 \gamma \Uk{2}\varphi  \;dx =&  \int_{B} \gamma(\Rki{1}{1}-\Uk{1})\varphi\;dx  + \int_{(\Gamma_N)_\eps^{-1}}  \partial_{n_\eps} \Uk{2} \varphi\;dx \\
                                                                                                & + \int_{\Dsf_\eps^{-1}} \eps^2 \gamma \Uk{2}\varphi\;dx.
\end{split}
\end{align}
Proceeding as for $\vk{1}\circ T_\eps$ we see that $\vk{2}\circ T_\eps$ satisfies for all $\varphi\in H^1_{\Gamma_\eps^{-1}}(\Dsf_\eps^{-1})$:
\begin{align}
    \begin{split}
        \int_{\Dsf_\eps^{-1}} \nabla (\vk{2}\circ T_\eps+\Rki{2}{1}(\eps)) \cdot \nabla \varphi  &+ \eps^2\gamma (\vk{2}\circ T_\eps + \Rki{2}{1}(\eps)) \varphi\;dx   \\
                                                                                                 & = -\eps^2\int_{\Dsf_\eps^{-1}} \gamma \Rki{2}{1}(x)\varphi \;dx -   \int_{(\Gamma_N)_\eps^{-1}} \partial_{n_\eps} \Rki{2}{1}(x)\varphi \;ds.
\end{split}
\end{align}
So $\eps\Uk{3}_\eps = \Uk{2}_\eps - \eps(\Uk{2}-\vk{2}\circ T_\eps-\Rki{2}{1}(\eps))$ satisfies:  $\eps \Uk{3}_\eps = -\eps(\Uk{2} - \Rki{2}{1})$ on $\Gamma_\eps^{-1}$ and 
\begin{align}
    \begin{split}
    \int_{\Dsf_\eps^{-1}} \nabla  (\eps \Uk{3}_\eps)\cdot \nabla \varphi + \eps^2 (\eps \Uk{3}_\eps)\varphi\;dx =& \eps^3 \int_{\Dsf_\eps^{-1}} (\Rki{2}{1}-\Uk{2})\varphi\;dx \\
                                                                                                                 & +  \eps\int_{(\Gamma_N)_\eps^{-1}} \partial_{n_\eps} (\Rki{2}{1}(x)-\Uk{2}(x))\varphi \;ds
\end{split}
\end{align}
for all $\varphi \in H^1_{\Gamma_\eps^{-1}}(\Dsf_\eps^{-1})$.
Now testing the previous equation with $\varphi = \eps \Uk{3}_\eps - Z_{\Gamma_\eps^{-1}}(\eps \Uk{3}_\eps) \in H^1_{\Gamma_\eps^{-1}}(\Dsf_\eps^{-1})$ and applying Lemma~\ref{lem:scaling_inequality}, (d),(e) and (f) yields
\begin{align}\label{eq:Ueps_3}
    \begin{split}
    \|\nabla (\eps \Uk{3}_\eps)\|_{L^2(\Dsf_\eps^{-1})} + \eps \|(\eps \Uk{3}_\eps)\|_{L^2(\Dsf_\eps^{-1})} \le&  C\big( \eps^2\|\Rki{2}{1}-\Uk{2}\|_{L^2(\Dsf_\eps^{-1})}  + \eps^{\frac12}\|\partial_{n_\eps}(\Rki{2}{1}-\Uk{2})\|_{L^2((\Gamma_N)_\eps^{-1})} \\
                                                                                                               & + \eps(\eps^{\frac12} \|\Rki{2}{1}-\Uk{2}\|_{L^2(\Gamma_\eps^{-1})} + |\Rki{2}{1}-\Uk{2}|_{H^{\frac12}(\Gamma_\eps^{-1})})\big). 
    \end{split}
\end{align}
It is readily checked that \eqref{eq:Ueps_3} also holds for $d=3$ with the respective definition of $\Uk{i},\Uk{i}_\eps$, $i=1,2$ (cf. Definition~\ref{def:Ueps_2d_3d}, (b)).  Now from from Lemma~\ref{lem:bound_rk2_Uk2} and Lemma~\ref{lma:remainder_est} applied to $V(x):= \Rki{2}{1}(x)-\Uk{2}(x)$, with $m=1$ for $d=2$ and $m=2$ for $d=3$, we conclude:
\begin{align}
    \|\Rki{2}{1}-\Uk{2}\|_{L^2(\Dsf_\eps^{-1})} &\le 
    \begin{cases}
        C_1\sqrt{-\ln(\eps)} + C_2 & \text{ for } d=2\\
        C_3 & \text{ for } d=3
    \end{cases}\\
        \|\Rki{2}{1}-\Uk{2}\|_{L^2((\Gamma_N)_\eps^{-1})}& \le 
        \begin{cases}
            C\eps^{\frac12} & \text{ for } d=2\\
            C\eps & \text{ for } d=3
        \end{cases} \\
        |\Rki{2}{1}-\Uk{2}|_{H^{\frac12}((\Gamma_N)_\eps^{-1})} & \le 
            \begin{cases}
        C \eps & \text{ for }d=2\\
        C\eps^{\frac32} & \text{ for } d=3
            \end{cases} \\ 
        \|\partial_{n_\eps}(\Rki{2}{1}-\Uk{2})\|_{L^2((\Gamma_N)_\eps^{-1})} & \le 
        \begin{cases}
    C \eps^{\frac32} & \text{ for } d=2 \\
    C \eps^2 & \text{ for } d=3.
        \end{cases}
\end{align}
So for $d=2$ using the previous inequalities to estimate the right hand side of \eqref{eq:Ueps_3} and dividing by $\eps$ we obtain the claimed estimate noting that $\eps \sqrt{-\ln(\eps)}$ converges slower to zero than $\eps$. Similarly for  $d=3$ we obtain the bound $C\eps$ as $\Rki{2}{1}$ does not produce a logarithmic term. The constant $C$ can be chosen independently of $\Omega$, since $\Rki{2}{1}$ and $\Uk{2}$ both depend on $\Omega$ only through $c_{\Omega,x_1} = (f_1-f_2)\sigma_{\Omega}(x_1)$ and $|\sigma_{\Omega}(x_1)|=1$.  This finishes the proof of (a). 

\emph{ad (b):} This has been shown in \cite{a_BAGAST_2021b} on page 581. It also follows from the proof of (a) by noting that $\gamma=0$ implies $\Uk{2}=0$, thus $\Rki{2}{1}=0,\vk{2}=0$ and thus \eqref{eq:Ueps_3} implies $\eps \Uk{3}_\eps=0$ on $\Dsf_\eps^{-1}$ which is equivalent to \eqref{eq:Ueps_explicit}.
\end{proof}

\begin{corollary}\label{cor:expansion_remainder_D}
    Let $\Omega\subset \Dsf$ be open and $x_1\in \Dsf\setminus\partial \Dsf$. 
    \begin{itemize}
        \item[(a)] 
            For dimension $d=2$ we have that $\Ce_\Omega(\eps):= \eps^4 \Uk{3}_\eps\circ T_\eps^{-1}$ satisfies:
\begin{align}\label{eq:ueps_expansion_D_2d}
    \begin{split}
    u_\eps = u_\Omega &+ \eps^2(\Uk{1}\circ T_\eps^{-1} + \vk{1}\circ T_\eps + \Rki{1}{1}(\eps)) \\
                      & + \eps^4(\Uk{2}\circ T_\eps^{-1} + \vk{2}\circ T_\eps + \Rki{2}{1}(\eps)) + \Ce_\Omega(\eps)  \quad \text{ a.e. in } \Dsf
\end{split}
\end{align}
with $\|\Ce_\Omega(\eps)\|_{H^1(\Dsf)}=O(\eps^5\sqrt{-\ln(\eps)})$ uniformly in $\Omega$. 
\item[(b)] For dimension $d=3$, we have that $\Ce_\Omega(\eps):= \eps^4 \Uk{3}_\eps\circ T_\eps^{-1}$ satisfies
\begin{align}\label{eq:ueps_expansion_D_3d}
    \begin{split}
    u_\eps = u_\Omega &+ \eps^2(\Uk{1}\circ T_\eps^{-1} + \eps \vk{1}\circ T_\eps )\\
                      &+ \eps^4(\Uk{2}\circ T_\eps^{-1} + \eps \vk{2}\circ T_\eps ) + \Ce_\Omega(\eps) \quad \text{ a.e. in } \Dsf
\end{split}
\end{align}
with $\|\Ce_\Omega(\eps)\|_{H^1(\Dsf)}=O(\eps^{\frac{11}{2}})$ uniformly in $\Omega$. 
\item[(c)] If $\gamma=0$ and $\Gamma\ne \emptyset$, then $\Uk{2}_\eps=0$ and thus
    \begin{equation}
        u_\eps = u_\Omega + \begin{cases}
            \eps^2(\Uk{1}\circ T_\eps^{-1} + \vk{1} + \Rki{1}{1}(\eps)) & \text{ for } d=2\\
            \eps^2(\Uk{1}\circ T_\eps^{-1} + \eps \vk{1}) & \text{ for } d=3. 
        \end{cases}
    \end{equation}
\end{itemize}
\end{corollary}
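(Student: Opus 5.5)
The corollary is essentially a rewriting of Definition~\ref{def:Ueps_2d_3d} in the original variables, followed by a rescaling of the estimate of Theorem~\ref{thm:remainder_estimate_Deps}. The plan has three steps.

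\emph{Step 1: unwind the definitions (case $d=2$).} From Definition~\ref{def:Ueps_2d_3d}(a) we have
\begin{align*}
\Uk{1}_\eps &= \Uk{1} + \vk{1}\circ T_\eps + \Rki{1}{1}(\eps) + \eps\Uk{2}_\eps,\\
\Uk{2}_\eps &= \eps(\Uk{2} + \vk{2}\circ T_\eps + \Rki{2}{1}(\eps)) + \eps\Uk{3}_\eps.
\end{align*}
Substituting the second identity into the first gives
\[
\Uk{1}_\eps = \Uk{1} + \vk{1}\circ T_\eps + \Rki{1}{1}(\eps) + \eps^2(\Uk{2} + \vk{2}\circ T_\eps + \Rki{2}{1}(\eps)) + \eps^2\Uk{3}_\eps .
\]
We then use $u_\eps - u_\Omega = \eps^2\,\Uk{1}_\eps\circ T_\eps^{-1}$ and compose every term with $T_\eps^{-1}$. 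This is an identity almost everywhere in $\Dsf$, with the convention that $\vk{i}\circ T_\eps$ composed with $T_\eps^{-1}$ is just $\vk{i}$; the statement's notation should be read this way. The resulting expansion is \eqref{eq:ueps_expansion_D_2d}, with remainder $\eps^4\Uk{3}_\eps\circ T_\eps^{-1}$. The case $d=3$ is handled identically using Definition~\ref{def:Ueps_2d_3d}(b). There the $\Rki{i}{1}(\eps)$ constants are absent, and the $\vk{i}$ terms carry an extra factor $\eps$.

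\emph{Step 2: estimate the remainder.} By Lemma~\ref{lem:change_variables_eps}(c),
\[
\|\Ce_\Omega(\eps)\|_{H^1(\Dsf)} = \eps^4\,\eps^{\frac d2-1}\|\Uk{3}_\eps\|_\eps .
\]
The scaled norm $\|\Uk{3}_\eps\|_\eps$ is exactly the left-hand side of \eqref{eq:estimate_Uk3eps}. Theorem~\ref{thm:remainder_estimate_Deps}(a) therefore yields
\begin{itemize}
\item for $d=2$: $\eps^4\cdot\eps^0\cdot C\eps\sqrt{-\ln\eps} = O(\eps^5\sqrt{-\ln\eps})$;
\item for $d=3$: $\eps^4\cdot\eps^{1/2}\cdot C\eps = O(\eps^{11/2})$.
\end{itemize}
Uniformity in $\Omega$ is inherited from the $\Omega$-independence of $C$ stated in Theorem~\ref{thm:remainder_estimate_Deps}(a). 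That independence holds because all the data depend on $\Omega$ only through $c_{\Omega,x_1}$, and $|\sigma_\Omega(x_1)|=1$.

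\emph{Step 3: the case $\gamma=0$ (part (c)).} We invoke Theorem~\ref{thm:remainder_estimate_Deps}(b), so that $\Uk{2}_\eps=0$. The first unwound identity in Step 1 then terminates after the first-order terms, which gives (c) directly.

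The only point needing care is the bookkeeping in Step 1. One must keep the correct powers of $\eps$ in the $d=3$ definition, and read the composition convention consistently so that the $\vk{i}$ terms are evaluated in the original variables. The analytic content is already contained in Theorem~\ref{thm:remainder_estimate_Deps}.
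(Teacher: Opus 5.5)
Your proposal is correct and follows the paper's proof. The expansion is Definition~\ref{def:Ueps_2d_3d} unwound and composed with $T_\eps^{-1}$, and you are right that the $\vk{i}$ terms must be read as $\vk{i}$ rather than $\vk{i}\circ T_\eps$ in the original variables, as the paper itself does later. The bound is the rescaling $\|\Ce_\Omega(\eps)\|_{H^1(\Dsf)}\le \eps^{3+d/2}(\|\eps\Uk{3}_\eps\|_{L^2(\Dsf_\eps^{-1})}+\|\nabla\Uk{3}_\eps\|_{L^2(\Dsf_\eps^{-1})})$ combined with Theorem~\ref{thm:remainder_estimate_Deps}(a), and part (c) is Theorem~\ref{thm:remainder_estimate_Deps}(b). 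The only difference is that the paper computes the rescaled integrals by hand instead of citing Lemma~\ref{lem:change_variables_eps}(c); your factor $\eps^{d/2-1}$ also gives the correct $\eps^{9/2}$ prefactor for $d=3$, which the paper's displayed inequality writes as $\eps^4$.
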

\begin{proof}
    \emph{ad (a) and (b):}  Set  $\Ce_\Omega(\eps):= \eps^4\Uk{3}_\eps\circ T_\eps^{-1}$, where $\Uk{3}_\eps$ is defined as in Definition~\ref{def:Ueps_2d_3d}, (a).  Then \eqref{eq:ueps_expansion_D_2d} is satisfied. Now we compute changing variables:
\begin{align}
    \int_\Dsf (\eps^4\Uk{3}_\eps\circ T_\eps^{-1})^2\;dx &= \eps^{6+d}\int_{\Dsf_\eps^{-1}} (\eps \Uk{3}_\eps)^2\;dx \\
    \int_\Dsf |\eps^4\nabla(\Uk{3}_\eps\circ T_\eps^{-1})|^2\;dx &= \eps^{6+d}\int_{\Dsf_\eps^{-1}} |\nabla\Uk{3}_\eps|^2\;dx.
\end{align}
Now taking square roots on both sides and using Theorem~\ref{thm:remainder_estimate_Deps}, we obtain
\begin{align}
    \|\Ce_\Omega(\eps)\|_{H^1(\Dsf)}  & \le \eps^4(\|\eps \Uk{3}_\eps\|_{L^2(\Dsf_\eps^{-1})} + \|\nabla \Uk{3}_\eps\|_{L^2(\Dsf_\eps^{-1})})
                                        \stackrel{\eqref{eq:estimate_Uk3eps}}{\le } \begin{cases}
        C\eps^5\sqrt{-\ln(\eps)} & \text{ for } d=2 \\
        C\eps^{\frac{11}{2}} & \text{ for } d=3,
    \end{cases}
\end{align}
where in the last step we used \eqref{eq:estimate_Uk3eps}.
The constant $C$ in the previous estimate depends on via $c_{\Omega,x_1} = \sigma_\Omega(x_1)(f_1-f_1)$ on the shape. Therefore this constant is bounded for small perturbations of the shape $\Omega$.

\emph{ad (c)} This is a direct consequence of Theorem~\ref{thm:remainder_estimate_Deps} part (b).

\end{proof}

\begin{remark}
We note that for the three dimensional case $d=3$ in (b), the remainder estimate is of order smaller than $6$ (corresponding to $|B_\eps|^2$). However, for the expansion of the cost functional later on its remainder estimates this is sufficient. 
\end{remark}

\subsection{The special case of $\gamma=0$: Asymptotics of $-\Delta u_\Omega=f_\Omega$} 

We consider the special case $\gamma=0$ with $|\Gamma|\ne 0$. That is for $x_1\in \Dsf\setminus\partial \Omega$ let $u_\eps=u_{\Omega_\eps(x_1)}\in H^1_0(\Dsf)$, $\eps>0$, be the solution to 
\begin{cases2}{eq:state_fOmegaeps}
    -\Delta u_\eps  & = f_{\Omega_\eps(x_1)} && \quad  \text{ in } \Dsf\\
    u_\eps & = g && \quad \text{ on } \Gamma \\
    \partial_n u_\eps & = h && \quad \text{ on } \Gamma_N.
\end{cases2}
 In this case the expansion simplifies considerably as discussed next. We have shown that in this case we have
\begin{equation}\label{eq:explicit_23d}
    u_\eps = u_\Omega +\begin{cases}
 \eps^2 \Uk{1}\circ T_\eps^{-1} + \vk{1} + \Rki{1}{1}(\eps) & \text{ for } d=2\\
 \eps^2 (\Uk{1}\circ T_\eps^{-1} + \eps\vk{1}) & \text{ for } d=3.
    \end{cases}
\end{equation}

\subsubsection{Asymptotics in dimension $d=2$}
The explicit formula for $\Uk{1}$ given in \eqref{eq:Uk1_explicit_2d} and the definition $\Rki{1}{1}(x)=-\frac12c_{\Omega,x_1}\ln(|x-x_1)$ (cf. Definition~\ref{def:R11_explicit}) show that \eqref{eq:explicit_23d} can be written for $d=2$ and $x_1\in \Dsf\setminus\partial \Omega$ as follows:
\begin{align}\label{eq:expansion_explicit_fomega}
    u_\eps & = u_0 + \eps^2 m_{\Omega,x_1} \begin{cases}
          -\frac12( \eps^{-2} |x-x_1|^2 - 1)  + \hat v  +  \ln(\eps) & \quad \text{ for } x\in B_\eps(x_1)\\
         -\ln(|x-x_1|) + \hat v   & \quad \text{ for } x\in \Dsf\setminus\overline{B_\eps(x_1)},  
    \end{cases}
\end{align}
where $m_{\Omega,x_1} = \sigma_{\Omega}(x_1) (f_1-f_2)/2$ and $\hat v\in H^1(\Dsf)$ is the solution to 
\begin{cases2}{eq:mixed_v_fomega}
    -\Delta \hat v & = 0   &&\quad \text{ in } \Dsf\\
    \hat v & = \ln(|x-x_1|) && \quad \text{ on } \Gamma_D \\
    \partial_n \hat v & = \partial_{n}\ln(|x-x_1|) && \quad \text{ on } \Gamma_N.
\end{cases2}
Note that $\vk{1}=m_{\Omega,x_1}\hat v$. As pointed out in \cite{a_BAGAST_2021b} there is an intricate relation between the first order topological expansion, the compound layer expansion and the first topological state derivative.
\begin{lemma}
We have for $x_1\in \Dsf\setminus\partial \Omega$ the topological state derivative $d_{x_1} u_\Omega = |B|^{-1}(\Rki{1}{1}(x-x_1) + \vk{1})$ can be written as:
\begin{equation}
    d_{x_1}u_\Omega =  |B|^{-1}m_{\Omega,x_1}(-\ln|x-x_1| + \hat v),
\end{equation}
where $|B|=\pi$ and 
\begin{equation}\label{eq:ueps_state_derivative_fomega}
    u_\eps - u_0 = |B_\eps| d_{x_1}u_\Omega \quad \text{ on } \Dsf\setminus \bar B_\eps(x_1).
\end{equation}
\end{lemma}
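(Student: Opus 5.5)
The plan is to exploit the exact representation \eqref{eq:expansion_explicit_fomega}, which is valid because $\gamma=0$ and $\Gamma\ne\emptyset$ (Corollary~\ref{cor:expansion_remainder_D}(c)). This reduces both claims to elementary identities. The argument has three steps: an exact identity outside the ball, an identification of the limit defining $d_{x_1}u_\Omega$, and a comparison with the equation \eqref{eq:ueps_state_derivative_fomega_equation}.

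\textbf{Exact identity outside the ball.} On $\Dsf\setminus\overline{B_\eps(x_1)}$ the representation gives
\[
u_\eps-u_\Omega=\eps^2 m_{\Omega,x_1}\bigl(-\ln|x-x_1|+\hat v\bigr).
\]
Since $|B_\eps|=\pi\eps^2$, this is exactly $|B_\eps|\,|B|^{-1}m_{\Omega,x_1}(-\ln|x-x_1|+\hat v)$, with no remainder. Once the formula for $d_{x_1}u_\Omega$ is established, this identity is precisely \eqref{eq:ueps_state_derivative_fomega}. One should also check that $\Rki{1}{1}(x-x_1)+\vk{1}$ equals $m_{\Omega,x_1}(-\ln|x-x_1|+\hat v)$. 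This follows from $\Rki{1}{1}(y)=-\tfrac12 c_{\Omega,x_1}\ln|y|=-m_{\Omega,x_1}\ln|y|$ and $\vk{1}=m_{\Omega,x_1}\hat v$, since \eqref{eq:vk1_omega} with $\gamma=0$ has the boundary data of $-\Rki{1}{1}$.

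\textbf{Identifying the limit.} Set $w:=|B|^{-1}m_{\Omega,x_1}(-\ln|x-x_1|+\hat v)$. I want to show that $(u_\eps-u_\Omega)/|B_\eps|\to w$ in $W^{1,p}_\Gamma(\Dsf)$ for $p<2$. By the identity above, the difference vanishes off $B_\eps(x_1)$. Inside $B_\eps(x_1)$ it equals
\[
|B|^{-1}m_{\Omega,x_1}\Bigl(-\tfrac12(\eps^{-2}|x-x_1|^2-1)+\ln\eps+\ln|x-x_1|\Bigr).
\]
After scaling $x=x_1+\eps y$, this becomes a bounded function of $y$ on $B$. Its $L^p$ norm on $B_\eps(x_1)$ is therefore $O(\eps^{2/p})$, because the $\ln\eps$ cancels against $\ln|x-x_1|=\ln\eps+\ln|y|$. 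Its gradient is $O(\eps^{-1})$ pointwise, so the gradient has $L^p$ norm $O(\eps^{2/p-1})\to0$ for $p<2$. Hence the strong limit exists and equals $w$. Alternatively, one can verify that $w$ solves \eqref{eq:ueps_state_derivative_fomega_equation}: indeed $-\Delta(-\tfrac{1}{2\pi}\ln|x-x_1|)=\delta_{x_1}$ and $m_{\Omega,x_1}/\pi\cdot 2\pi=c_{\Omega,x_1}$, while $\hat v$ corrects the boundary data. Uniqueness from the cited lemma then identifies $w=d_{x_1}u_\Omega$.

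\textbf{Main obstacle.} The only delicate point is the $W^{1,p}$ estimate of the gradient inside the ball, where $p<2$ is needed. The scaling argument of Lemma~\ref{lem:change_variables_eps}(b) handles this directly.
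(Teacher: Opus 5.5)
Your proof is correct and follows the paper's approach: the paper also reads off $d_{x_1}u_\Omega$ by comparing the exact representation \eqref{eq:expansion_explicit_fomega} on $\Dsf\setminus\overline{B_\eps(x_1)}$ with the definition of the state derivative, and you make that identification rigorous either by the ball estimate or by uniqueness for \eqref{eq:ueps_state_derivative_fomega_equation}. The ball estimate needs one repair: the constant inside $B_\eps(x_1)$ must be $-\ln\eps$, not the displayed $+\ln\eps$, since $\Rki{1}{1}(\eps)=-m_{\Omega,x_1}\ln\eps$ and $u_\eps$ is continuous across $\partial B_\eps(x_1)$; the scaled difference is then $|B|^{-1}m_{\Omega,x_1}\bigl(-\tfrac12(|y|^2-1)+\ln|y|\bigr)$, which vanishes on $|y|=1$ but is neither bounded nor has $O(1)$ gradient, as you claimed. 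Your rates $O(\eps^{2/p})$ and $O(\eps^{2/p-1})$ still hold by Lemma~\ref{lem:change_variables_eps}, because $\ln|y|$ and $y/|y|^2$ lie in $L^p(B)$ for $p<2$.
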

\begin{proof}
By comparing \eqref{eq:expansion_explicit_fomega} on $\Dsf\setminus\overline B_\eps(x_1)$ with $u_\eps = u_0 +|B_\eps| d_{x_1}u_\Omega + o(1)$  the first formula follows. From this the second one immediately follows as well.
\end{proof}

\subsubsection{Asymptotics in dimension $d=3$}
For $d=3$ we have $\Rki{1}{1}(x) = c_{\Omega,x_1} |x-x_1|^{-1}$ (cf., Definition~\ref{def:R11_explicit}). Hence recalling the explicit formula \eqref{eq:Uk1_explicit_3d} for $\Uk{1}$, we see that  equation \eqref{eq:explicit_23d} can be written for $x_1\in \Dsf\setminus \partial \Omega$ as:
\begin{align}
    u_\eps  = u_0 + \eps^3 m_{\Omega,x_1} \begin{cases}
         -\frac12 (\eps^{-2}|x-x_1|^2 -3) + \hat v    \quad & \text{ for } x\in B_\eps(x_1)\\
           \frac{1}{|x-x_1|} + \hat v \quad & \text{ for } x\in \Dsf\setminus\overline{B_\eps(x_1)},
\end{cases}
\end{align}
where $m_{\Omega,x_1} = \sigma(x_1) (f_1-f_2)/3$ and $\hat v\in H^1(\Dsf)$ is the unique solution to 
\begin{cases2}{eq:mixed_v_fomega_3d}
    -\Delta \hat v & = 0   &&\quad \text{ in } \Dsf\\
    \hat v & = -\frac{1}{|x-x_1|} && \quad \text{ on } \Gamma_D \\
    \partial_n \hat v & = -\partial_{n}\frac{1}{|x-x_1|} && \quad \text{ on } \Gamma_N.
\end{cases2}

\begin{lemma}
We have for $x_1\in \Dsf\setminus\partial \Omega$ that the topological state derivative $d_{x_1} u_\Omega = |B|^{-1}(\Rki{1}{1}(x-x_1) + \vk{2})$ can be written as:
\begin{equation}
    d_{x_1}u =  |B|^{-1}m_{\Omega,x_1}\left(\frac{1}{|x-x_1|} + \hat v\right),
\end{equation}
where $|B|=\frac{4\pi}{3}$ and in particular:
\begin{equation}\label{eq:ueps_state_derivative_fomega_3d}
    u_\eps - u_0 = |B| d_{x_1}u_\Omega \quad \text{ on } \Dsf\setminus\overline B_\eps(x_1).
\end{equation}
\end{lemma}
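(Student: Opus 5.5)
The proof mirrors the two-dimensional lemma. We take the explicit exterior representation of $u_\eps$ for $\gamma=0$, $d=3$, read off the limit of $(u_\eps-u_\Omega)/|B_\eps|$, and compare it with the characterization of $d_{x_1}u_\Omega$ from \cite{a_BAMAST_2023a}.

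\textbf{Step 1 (exterior representation).} By Corollary~\ref{cor:expansion_remainder_D}(c), equivalently \eqref{eq:explicit_23d}, we have $u_\eps=u_\Omega+\eps^2(\Uk{1}\circ T_\eps^{-1}+\eps\vk{1})$ exactly, since the remainder vanishes when $\gamma=0$. On $\Dsf\setminus\overline{B_\eps(x_1)}$ the point $T_\eps^{-1}(x)$ lies outside $B$, so Remark~\ref{rem:U11_explicit} gives
\[
\Uk{1}(T_\eps^{-1}(x))=\frac{c_{\Omega,x_1}}{3}\,\frac{\eps}{|x-x_1|}=\eps\,\Rki{1}{1}(x-x_1).
\]
Hence on this set
\[
u_\eps-u_\Omega=\eps^3\bigl(\Rki{1}{1}(x-x_1)+\vk{1}\bigr)=\eps^3 m_{\Omega,x_1}\Bigl(\frac{1}{|x-x_1|}+\hat v\Bigr),
\]
using $\vk{1}=m_{\Omega,x_1}\hat v$ with $m_{\Omega,x_1}=c_{\Omega,x_1}/3$. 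The last identity follows by comparing \eqref{eq:vk1_omega} with $\gamma=0$ against \eqref{eq:mixed_v_fomega_3d}. The $\vk{2}$ appearing in the statement should be read as $\vk{1}$, and the same in \eqref{eq:ueps_state_derivative_fomega_3d} with $|B_\eps|=\eps^3|B|$ in place of $|B|$.

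\textbf{Step 2 (identify the limit).} Set $w:=|B|^{-1}m_{\Omega,x_1}(|x-x_1|^{-1}+\hat v)$. Step 1 says exactly that $u_\eps-u_\Omega=|B_\eps|\,w$ on $\Dsf\setminus\overline{B_\eps(x_1)}$, which is \eqref{eq:ueps_state_derivative_fomega_3d}. It remains to show $(u_\eps-u_\Omega)/|B_\eps|\to w$ in $W^{1,p}(\Dsf)$ for $p<3/2$; then $d_{x_1}u_\Omega=w$. The difference is supported in $B_\eps(x_1)$. There the quotient equals $|B|^{-1}m_{\Omega,x_1}(-\tfrac12(\eps^{-2}|x-x_1|^2-3)\eps^{-1}+\hat v)$, which is of size $\eps^{-1}$ with gradient of size $\eps^{-2}$. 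Meanwhile $w=O(\eps^{-1})$ there, and $\nabla w$ is bounded by $|x-x_1|^{-2}$. Therefore the $L^p$ norm of the gradient of the difference over $B_\eps$ is at most $C\eps^{-2}\eps^{3/p}$, plus the tail of $|x|^{-2}$ in $L^p(B_\eps)$, and both tend to zero exactly when $p<3/2$. The function values are handled the same way.

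\textbf{Step 3 (consistency check and main obstacle).} As a cross-check, $w$ solves \eqref{eq:ueps_state_derivative_fomega_equation}. The function $\hat v$ is harmonic and cancels the boundary data of $|x-x_1|^{-1}$. In addition, $-\Delta |x-x_1|^{-1}=4\pi\delta_{x_1}$, so
\[
-\Delta w=|B|^{-1}\,\frac{c_{\Omega,x_1}}{3}\,4\pi\,\delta_{x_1}=c_{\Omega,x_1}\delta_{x_1},
\]
because $|B|=4\pi/3$. The identification can therefore also be concluded from uniqueness in \cite[Thm~2.7]{a_BAMAST_2023a}. The only delicate point is the $W^{1,p}$ estimate near $x_1$ in Step 2; it is a routine scaling computation with Lemma~\ref{lem:change_variables_eps}.
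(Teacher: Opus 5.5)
Your proof is correct and follows the paper's route. Like the paper, you read off the exact exterior identity $u_\eps-u_\Omega=\eps^3(\Rki{1}{1}(x-x_1)+\vk{1})=|B_\eps|\,|B|^{-1}m_{\Omega,x_1}(|x-x_1|^{-1}+\hat v)$ from the $\gamma=0$ representation; the paper then identifies the coefficient by comparing with $u_\eps=u_\Omega+|B_\eps|d_{x_1}u_\Omega+o(1)$, and your Steps~2 and~3 (direct $W^{1,p}$ convergence for $p<3/2$, or uniqueness for the Dirac problem) just make that comparison rigorous, with your readings $\vk{2}\to\vk{1}$ and $|B|\to|B_\eps|$ both right. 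One small correction: $w$ is not $O(\eps^{-1})$ on $B_\eps(x_1)$, but the estimate only needs the local $L^p$-integrability of $|x-x_1|^{-1}$ and $|x-x_1|^{-2}$, so the conclusion stands.
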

\begin{proof}
This follows again by comparing \eqref{eq:state_fOmegaeps} with $u_\eps = u_0 +|B_\eps| d_{x_1}u_\Omega + o(1)$ on $\Dsf\setminus\overline B_\eps(x_1)$. 
\end{proof}

\section{Second topological state derivative at the same point $x_1$ }
The second topological state derivative  $d_{x_1x_2}^2 u_\Omega$ has been introduced in Definition~\ref{def:state_derivatives} under the assumption $x_1\ne x_2$. In fact if we were to consider the same definition for $x_1=x_2$ and compare $d_{x_1} u_{\Omega\setminus B_\eps(x_1)}$ with $d_{x_1} u_\Omega$ for, e.g.,  $x_1\in \Omega$ and note $1 = \sigma_{\Omega\setminus B_\eps(x_1)}$ and $  \sigma_\Omega(x_1)=-1$ for all $\eps>0$, then 
\begin{equation}
    \int_\Dsf |B_\eps(x_1)|^{-1} (d_{x_1} u_{\Omega_\eps(x_1)}-d_{x_1}u_\Omega)\varphi \;dx = -|B_\eps(x_1)|^{-1}2(f_1-f_2) \varphi(x_1) 
\end{equation}
for $\varphi\in W^{1,p}_\Gamma(\Dsf), \; p\in [1,\frac{d}{d-1})$ and the right hand side goes to infinity when $\eps$ goes to zero. 

It is therefore to be expected that the second topological state derivative of $u_\Omega$ at the point $x_1$ cannot be a function, but is at best a distribution. Before we can introduce the second order topological state derivative at a point $x_1$, we need to discuss the term $\Rki{2}{1}$ which plays a crucial role in its definition. 

\subsection{Explicit constant of $\Rki{2}{1}$}\label{subsec:explicit_Rki}
For convenience we recall $c_{\Omega,x_1}=\sigma_\Omega(x_1)(f_1-f_2)$. The next lemma provides the constants $c_1,c_2$ in front of 
$\Rki{2}{1}(x) = c_1\ln(|x|)$ for $d=2$ and $\Rki{2}{1}(x) = c_2/|x|$ for $d=3$. These constants play a crucial role in relating the compound layer expansion to the second order topological state derivative defined in the next section. 
\begin{lemma}
    \begin{itemize}
        \item[(a)] 
  In dimension $d=2$, we have
    \begin{equation}
        \Rki{2}{1}(x) = \gamma K_{\Omega,x_1}\ln(|x|), \quad x\in \VR^2\setminus\{0\}
    \end{equation}
    with the constant:
    \begin{equation}
        K_{\Omega,x_1} := c_{\Omega,x_1}
            \frac{1}{2\pi} \int_B \frac12 \ln(|x|) -\frac14(|x|^2-1)\;dx.
    \end{equation}
    Since $\int_B \ln(|x|)\;dx = \int_B |x|^2-1\;dx=-\frac{\pi}{2}$:
    \begin{equation}\label{eq:K_Omega_x1_2d}
        K_{\Omega,x_1} = c_{\Omega,x_1}\frac{1}{8 \pi}\int_B|x|^2-1\;dx = -c_{\Omega,x_1}\frac{1}{16}. 
    \end{equation}
\item[(b)] In dimension $d=3$, we have
    \begin{equation}
        \Rki{2}{1}(x) = \gamma K_{\Omega,x_1} \frac{1}{|x|}, \quad x\in \VR^3\setminus \{0\}
    \end{equation}
    with the constant:
    \begin{equation}
        K_{\Omega,x_1} := c_{\Omega,x_1}\frac{1}{4\pi} \int_B \frac{1}{3|x|}+\frac16(|x|^2-3)\;dx.
    \end{equation}
    Here the constant computes as follows:
    \begin{equation}\label{eq:K_Omega_x1_3d}
        K_{\Omega,x_1} = c_{\Omega,x_1}\frac{1}{30}.
    \end{equation}
\end{itemize}
\end{lemma}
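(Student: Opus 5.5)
The plan is to identify $\Rki{2}{1}$ as the monopole term in the far-field expansion of $\Uk{2}=U_f$, where $f=\gamma(\Rki{1}{1}-\Uk{1})\chi_B$, and then to evaluate the monopole constant $\int_B f\,dy$ explicitly using the formulas of Remark~\ref{rem:U11_explicit} and Definition~\ref{def:R11_explicit}.

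\emph{Step 1 (far-field expansion).} For $|x|>2$ and $y\in B$, expand $E(x-y)$ by Taylor's formula around $x$:
\begin{equation*}
E(x-y)=E(x)-\nabla E(x)\cdot y+O(|\nabla^2E(x)|).
\end{equation*}
Here $|\nabla E(x)|\sim |x|^{1-d}$ and $|\nabla^2 E(x)|\sim|x|^{-d}$. Integrating against $f$ gives
\begin{equation*}
\Uk{2}(x)=E(x)\int_B f\,dy-\nabla E(x)\cdot\int_B f(y)\,y\,dy+O(|x|^{-d}).
\end{equation*}
The explicit formulas show that $f$ is radial, so $\int_B f(y)\,y\,dy=0$. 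This is the symmetry argument of Remark~\ref{rem:int_nabla_f_xk} with $k=1$. Hence the leading term is $\Rki{2}{1}(x)=E(x)\int_B f\,dy$, which matches the decay orders stated in the definition of $\Uk{2}$. I also need $f\in L^2(B)$. For $d=2$ this holds because $\ln|y|\in L^2(B)$; for $d=3$ it holds because $|y|^{-1}\in L^2(B)$.

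\emph{Step 2 (case $d=2$).} Here $E(x)=-\frac1{2\pi}\ln|x|$, and on $B$ we have $\Rki{1}{1}-\Uk{1}=c_{\Omega,x_1}\big(-\frac12\ln|y|+\frac14(|y|^2-1)\big)$. Multiplying, the two minus signs combine into the stated form
\begin{equation*}
\Rki{2}{1}(x)=\gamma\ln|x|\,c_{\Omega,x_1}\frac1{2\pi}\int_B\Big(\frac12\ln|y|-\frac14(|y|^2-1)\Big)dy .
\end{equation*}
Polar coordinates give $\int_B\ln|y|\,dy=2\pi\int_0^1 r\ln r\,dr=-\frac{\pi}{2}$ and $\int_B(|y|^2-1)\,dy=2\pi\big(\frac14-\frac12\big)=-\frac\pi2$. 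The bracket therefore integrates to $-\frac\pi4+\frac\pi8=-\frac\pi8$, which yields $K_{\Omega,x_1}=-c_{\Omega,x_1}/16$. This is consistent with \eqref{eq:K_Omega_x1_2d}.

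\emph{Step 3 (case $d=3$).} Here $E(x)=\frac1{4\pi|x|}$ and $\Rki{1}{1}-\Uk{1}=c_{\Omega,x_1}\big(\frac1{3|y|}+\frac16(|y|^2-3)\big)$ on $B$. This gives the stated integral form of $K_{\Omega,x_1}$. Spherical coordinates give $\int_B|y|^{-1}\,dy=4\pi\int_0^1r\,dr=2\pi$ and $\int_B(|y|^2-3)\,dy=4\pi\big(\frac15-1\big)=-\frac{16\pi}{5}$. Hence the integral equals $\frac{2\pi}{3}-\frac{8\pi}{15}=\frac{2\pi}{15}$, and multiplying by $\frac1{4\pi}$ gives $K_{\Omega,x_1}=c_{\Omega,x_1}/30$, as in \eqref{eq:K_Omega_x1_3d}.

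The only step needing care is Step 1. There I must justify that the remainder of the Taylor expansion is uniformly of the claimed order, which follows since $|y|\le 1\le |x|/2$. I must also verify that the dipole term vanishes by radial symmetry. The remaining steps are elementary integrals.
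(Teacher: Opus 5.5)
Your proposal is correct and follows essentially the paper's route. The paper also isolates $\Rki{2}{1}(x)=\gamma E(x)\int_B(\Rki{1}{1}-\Uk{1})\,dy$ by expanding $t\mapsto\int_B(\Rki{1}{1}-\Uk{1})(y)E(x-ty)\,dy$ with the fundamental theorem of calculus; the resulting $O(|x|^{1-d})$ remainder already suffices to identify the monopole, and the paper then evaluates the integrals in polar/spherical coordinates. Your extra Taylor order with the radial cancellation of the dipole is a harmless refinement, and your values $-\pi/2$, $-\pi/2$, $2\pi$, $-16\pi/5$, giving $K_{\Omega,x_1}=-c_{\Omega,x_1}/16$ and $K_{\Omega,x_1}=c_{\Omega,x_1}/30$, are all correct.
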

\begin{proof}
    The fundamental theorem of calculus applied $g:t\mapsto \int_B(\Rki{1}{1}(y)-\Uk{1}(y))E(x-ty)\;dy$  on $[0,1$] shows that for $x\ne0$:
 \begin{align}
     \Uk{2}(x) =&  \gamma E(x) \int_B (\Rki{1}{1}(y)-\Uk{1}(y))\;dy \\
                & - \gamma \int_0^1\int_B (\Rki{1}{1}(y)-\Uk{1}(y))\partial_y E(x-ty)\;dy\;dt.
\end{align}
This shows that for $x\ne 0$:
\begin{equation}
    \Rki{2}{1}(x) = \gamma E(x) \int_B \Rki{1}{1}(y)-\Uk{1}(y)\;dy.
\end{equation}
The integral can be explicitly evaluated using the explicit formula for $\Uk{1}$ given in Remark~\ref{rem:U11_explicit} and using polar/spherical coordinates in dimension two/three, respectively.
\end{proof}

\begin{remark}\label{rem:const_int_x2_1_KOmega}
    One readily computes using polar/spherical coordinates:
\begin{equation}\label{eq:int_x2_1_two}
     \frac14\int_B |x|^2-1\;dx = \begin{cases}
        -\frac{\pi}{8} & \text{ for } d=2 \\
        -\frac{2\pi}{15} & \text{ for } d=3.
    \end{cases}
\end{equation}
Therefore we get from \eqref{eq:K_Omega_x1_2d},\eqref{eq:K_Omega_x1_3d} the important relation:
\begin{equation}
    c_{\Omega,x_1} \frac14 \int_B |x|^2-1\;dx = \begin{cases}
        2\pi K_{\Omega,x_1} & \text{ for } d=2\\
        -4\pi K_{\Omega,x_1} & \text{ for } d=3.
    \end{cases}
\end{equation}

\end{remark}

\subsection{The second topological state derivative $d_{x_1}^2u_\Omega$ at $x_1$}
The first order topological state derivative $d_{x_1}u_\Omega$ of $u_\Omega$ has been introduced in Definition~\ref{def:state_derivatives}. Now we define the second order topological state derivative at the point $x_1$. We start with a motivation for the definition.

\subsubsection{Motivation for the second derivative}
We consider $d=2$ for the following discussion. Recall that the first topological state derivative of $u_\Omega$ at a point $x_1\in \Dsf\setminus \partial \Omega$ is given by
\begin{equation}
    d_{x_1} u_\Omega(x) = |B|^{-1}(\Rki{1}{1}(x-x_1) + \vk{1}(x)).
\end{equation}
The compound layer expansion can be written on $\Dsf\setminus \bar B_\eps(x_1)$ as:
\begin{equation}\label{eq:expansion_layer_motivation}
    u_\eps  = u_0 + |B_\eps|d_{x_1}u_\Omega   + \frac{|B_\eps|^2}{2} \left(\frac{2}{|B|^2}(\Uk{2}\circ T_\eps^{-1}+\vk{2} + \Rki{2}{1}(\eps))\right) + o(\eps^4). 
\end{equation}
This looks like a Taylor expansion in the volume $|B_\eps|$. Now recall that for large $|x|$ we have $\Uk{2}(x) \approx \Rki{2}{1}(x)$ and therefore for $x\in \Dsf\setminus \{x_1\}$ and small $\eps>0$
\begin{equation}
    \Uk{2}\circ T_\eps^{-1}(x)\approx \Rki{2}{1}(T^{-1}_\eps(x)) = \Rki{2}{1}(x-x_1)- \Rki{2}{1}(\eps)
\end{equation}
which means that the "quadratic term" corresponding to $\frac{|B_\eps|^2}{2}$ in \eqref{eq:expansion_layer_motivation} approximates as follows
\begin{equation}
    \frac{2}{|B|^2}\left(\Uk{2}\circ T_\eps^{-1}(x)+\vk{2}(x) + \Rki{2}{1}(\eps)\right) \approx \frac{2}{|B|^2}\left(\Rki{2}{1}(x-x_1) + \vk{2}(x)\right).
\end{equation}
Therefore it is expected, if \eqref{eq:expansion_layer_motivation} is viewed as a Taylor-like expansion in the volume $|B_\eps|$, that the second topological state derivative of $u_\Omega$ at $x_1$ should be of the form:
$2|B|^{-2}(\Rki{2}{1}(x-x_1) + \vk{2}(x))$. The discussion for $d=3$ is analogous and we proceed in the next section with the formal definition of the second topological state derivative at $x_1$. 

\subsubsection{Definition for dimension $d=2$}

The discussion of the previous section motivates the following definition.

\begin{definition}\label{def:second_order_state_derivative}
    The second order topological state derivative of $u_\Omega$ at $x_{1}\in \Dsf\setminus\partial \Omega$ is defined by:
\begin{equation}
    d_{x_1}^2 u_{\Omega}(x):= \mathfrak d^2_{x_1} u_{\Omega}(x)  - \frac{2}{|B|^2} 2\pi K_{\Omega,x_1} \delta_{x-x_1}, \quad x\in \Dsf\setminus\{x_1\}, 
\end{equation}
where $\delta_{x-x_1}$ is the delta-distribution at $x_1$ and
\begin{equation}
    \mathfrak d^2_{x_1} u_{\Omega}(x) :=     \frac{2}{|B|^2} (\Rki{2}{1}(x-x_1) + \vk{2}(x)), \quad \text{ for } x\in \Dsf\setminus \{x_1\}. 
\end{equation}
\end{definition}
Here we understand the delta-distribution for $f\in C(\Dsf)$ as follows:
\begin{equation}
    \int_\Dsf f \delta_{x-x_1} dx = \int_\Dsf f d\delta_{x_1} = f(x_1). 
\end{equation}

\begin{lemma}
    The function $z=\mathfrak d^2_{x_1} u_{\Omega}\in W^{1,p}_\Gamma(\Dsf)$, $1\le p < \frac{d}{d-1}$ satisfies:
\begin{cases2}{eq:frak_d}
    -\Delta z   & + \gamma z = -\frac{2}{|B|^2}2\pi \gamma K_{\Omega,x_1} \delta_{x_1} && \quad  \text{ in } \Dsf\\
    z & = 0 && \quad \text{ on } \Gamma \\
    \partial_n z & = 0 && \quad \text{ on } \Gamma_N.
\end{cases2}
\end{lemma}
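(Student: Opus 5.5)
The plan is to decompose $z=\frac{2}{|B|^2}(\Rki{2}{1}(\cdot-x_1)+\vk{2})$ and treat the two pieces separately. The singular piece is handled by the fundamental solution $E$ of \eqref{eq:laplace_fundamental}. The regular piece $\vk{2}$ is designed, through \eqref{eq:vk2_omega}, to cancel everything $\Rki{2}{1}$ produces except a Dirac mass at $x_1$.

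First I would use the explicit form of $\Rki{2}{1}$ from the lemma in Subsection~\ref{subsec:explicit_Rki}. For $d=2$ it gives
\begin{equation*}
\Rki{2}{1}(x)=\gamma K_{\Omega,x_1}\ln|x|=-2\pi\gamma K_{\Omega,x_1}E(x).
\end{equation*}
Since $-\Delta E=\delta_0$ in the distributional sense on $\VR^2$, this yields
\begin{equation*}
-\Delta \Rki{2}{1}(\cdot-x_1)=-2\pi\gamma K_{\Omega,x_1}\delta_{x_1}\quad\text{in }\mathcal D'(\Dsf).
\end{equation*}
Adding $\gamma\Rki{2}{1}(\cdot-x_1)$ to both sides gives $(-\Delta+\gamma)\Rki{2}{1}(\cdot-x_1)=-2\pi\gamma K_{\Omega,x_1}\delta_{x_1}+\gamma\Rki{2}{1}(\cdot-x_1)$. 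Next, \eqref{eq:vk2_omega} gives $(-\Delta+\gamma)\vk{2}=-\gamma\Rki{2}{1}(\cdot-x_1)$. Summing the two identities and multiplying by $2/|B|^2$ produces exactly the right-hand side of \eqref{eq:frak_d}. The boundary conditions follow in the same way: on $\Gamma$ we have $\vk{2}=-\Rki{2}{1}(\cdot-x_1)$, so $z=0$ there, and on $\Gamma_N$ the normal derivatives cancel by construction.

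For the regularity claim I would note the following. Since $x_1\notin\partial\Dsf$ and $\Rki{2}{1}(\cdot-x_1)$ is smooth near $\partial\Dsf$, the data of \eqref{eq:vk2_omega} are regular, so $\vk{2}\in H^1(\Dsf)\subset W^{1,p}(\Dsf)$. Moreover $\ln|x-x_1|\in W^{1,p}(\Dsf)$ for $p<2=\frac{d}{d-1}$, because $|\nabla \ln|x||=|x|^{-1}\in L^p_{loc}$ exactly when $p<2$. Hence $z\in W^{1,p}_\Gamma(\Dsf)$.

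The step I expect to be the main obstacle is making the weak formulation precise. It should read
\begin{equation*}
\int_\Dsf \nabla z\cdot\nabla\varphi+\gamma z\varphi\,dx=-\tfrac{2}{|B|^2}2\pi\gamma K_{\Omega,x_1}\varphi(x_1)
\end{equation*}
for test functions $\varphi\in W^{1,p'}_\Gamma(\Dsf)$ with $p'>d$, so that $\varphi(x_1)$ is defined. The Neumann condition should then be encoded through the absence of boundary terms on $\Gamma_N$. To verify this I would excise a small ball $B_r(x_1)$ and integrate by parts on $\Dsf\setminus B_r(x_1)$, where all functions are smooth enough. The $\Gamma_N$ terms cancel by the boundary conditions above. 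The remaining flux of $\Rki{2}{1}(\cdot-x_1)$ through $\partial B_r(x_1)$ equals $2\pi\gamma K_{\Omega,x_1}\varphi(x_1)+o(1)$ as $r\to0$, by continuity of $\varphi$. The volume integrals over $B_r(x_1)$ vanish as $r\to 0$ by integrability.

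The case $d=3$ works identically. There $\Rki{2}{1}=\gamma K_{\Omega,x_1}|x|^{-1}=4\pi\gamma K_{\Omega,x_1}E$. By Remark~\ref{rem:const_int_x2_1_KOmega} the constant in the definition would then carry the corresponding factor, and I would check the signs against that remark.
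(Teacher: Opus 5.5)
Your proposal is correct and follows the paper's proof. The paper writes $\Rki{2}{1}(x)=\gamma K_{\Omega,x_1}\ln|x|=-2\pi\gamma K_{\Omega,x_1}E(x)$, uses $-\Delta E(\cdot-x_1)=\delta_{x_1}$, and lets the equation and boundary data of $\vk{2}$ cancel the $\gamma\Rki{2}{1}$ term and the boundary traces, exactly as you do. Your excision argument for the weak formulation, and your regularity check, only make explicit what the paper compresses into ``from this and the definition of $\vk{2}$ the result follows.'' One small correction there: the source $-\gamma\Rki{2}{1}(\cdot-x_1)$ is not regular, since it is singular at $x_1$, but it lies in $L^2(\Dsf)$, which is all you need for $\vk{2}\in H^1(\Dsf)$.
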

\begin{proof}
    Notice that $\Rki{2}{1}(x) = \gamma K_{\Omega,x_1}\ln(|x|) =   -2\pi\gamma K_{\Omega,x_1} E(x)$ with $E(x)=-\frac{1}{2\pi}\ln(|x|)$. Therefore 
    \[
        -\Delta \Rki{2}{1}(x-x_1)  = -2\gamma \pi K_{\Omega,x_1}(-\Delta E(x-x_1))= -2\gamma \pi K_{\Omega,x_1}\delta_{x_1}.
    \]
    From this and the definition of $\vk{2}$ the result follows. 
\end{proof}

\begin{remark}
Notice that we can write the second state derivative as
\begin{equation}
    d_{x_1}^2u_\Omega(x)= \mathfrak d_{x_1}^2u_\Omega(x) + (-\Delta + \gamma)(\mathfrak d_{x_1}^2u_\Omega)(x).
\end{equation}
\end{remark}

\subsubsection{Definition for dimension $d=3$}
Following the motivation of the case $d=2$ we define the second topological state derivative as follows:
\begin{definition}
    The second order topological state derivative of $u_\Omega$ at $x_{1}\in \Dsf\setminus\partial \Omega$ is defined by:
\begin{equation}
    d_{x_1}^2 u_{\Omega}(x):= \mathfrak d^2_{x_1} u_{\Omega}(x)  + \frac{2}{|B|^2} 4\pi K_{\Omega,x_1} \delta_{x-x_1}, \quad x\in \Dsf\setminus\{x_1\}, 
\end{equation}
where $\delta_{x-x_1}$ is the delta-distribution at $x_1$ and
\begin{equation}
    \mathfrak d^2_{x_1} u_{\Omega}(x) :=     \frac{2}{|B|^2} (\Rki{2}{1}(x-x_1) + \vk{2}(x)) \quad \text{ for } x\in \Dsf\setminus \{x_1\}. 
\end{equation}
\end{definition}
\begin{lemma}
    The function $z=\mathfrak d^2_{x_1} u_{\Omega}\in W^{1,p}_\Gamma(\Dsf)$, $1\le p < \frac{d}{d-1}$ satisfies:
\begin{cases2}{eq:frak_d_3d}
    -\Delta z   & + \gamma z = \frac{2}{|B|^2}4\pi \gamma K_{\Omega,x_1} \delta_{x_1} && \quad  \text{ in } \Dsf\\
    z & = 0 && \quad \text{ on } \Gamma \\
    \partial_n z & = 0 && \quad \text{ on } \Gamma_N.
\end{cases2}
\end{lemma}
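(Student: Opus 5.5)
The plan is to mirror the proof of the two-dimensional lemma, replacing the logarithmic kernel by the Newtonian one. By the lemma on the explicit constant of $\Rki{2}{1}$, part (b), we have $\Rki{2}{1}(x) = \gamma K_{\Omega,x_1}|x|^{-1}$. With the fundamental solution $E(x)=\frac{1}{4\pi}\frac{1}{|x|}$ from \eqref{eq:laplace_fundamental}, this reads
\[
\Rki{2}{1}(x) = 4\pi\gamma K_{\Omega,x_1}E(x).
\]
Hence, in the sense of distributions on $\Dsf$,
\[
-\Delta \Rki{2}{1}(\cdot-x_1) = 4\pi\gamma K_{\Omega,x_1}\delta_{x_1}.
\]
The sign of this source is opposite to the two-dimensional case, where $\Rki{2}{1} = -2\pi\gamma K_{\Omega,x_1}E$. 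This accounts for the plus sign in \eqref{eq:frak_d_3d}.

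Next I would add the equation \eqref{eq:vk2_omega} for $\vk{2}$. In the interior, $(-\Delta+\gamma)\vk{2} = -\gamma\Rki{2}{1}(\cdot-x_1)$, while $(-\Delta+\gamma)\Rki{2}{1}(\cdot-x_1) = 4\pi\gamma K_{\Omega,x_1}\delta_{x_1}+\gamma\Rki{2}{1}(\cdot-x_1)$. The zero-order terms cancel in the sum $w:=\Rki{2}{1}(\cdot-x_1)+\vk{2}$, leaving
\[
(-\Delta+\gamma)w = 4\pi\gamma K_{\Omega,x_1}\delta_{x_1}.
\]
On the boundary the data of $\vk{2}$ were chosen exactly so that $w=0$ on $\Gamma$ and $\partial_n w=0$ on $\Gamma_N$. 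Multiplying by $\frac{2}{|B|^2}$ then gives the stated system for $z=\mathfrak d^2_{x_1}u_\Omega$.

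To make this rigorous, I would work with the very weak formulation: test against $\varphi\in W^{1,q}_\Gamma(\Dsf)$ with $q>d$, so that $\varphi$ is continuous and $\varphi(x_1)$ is meaningful. The steps are as follows.
\begin{enumerate}
\item Remove a small ball $B_r(x_1)$ from $\Dsf$.
\item Apply Green's formula to $\Rki{2}{1}(\cdot-x_1)$ on $\Dsf\setminus B_r(x_1)$; it is smooth and harmonic there, since $x_1\notin\partial\Dsf$.
\item Let $r\to0$. The boundary integral over $\partial B_r(x_1)$ of $\partial_n\Rki{2}{1}\,\varphi$ tends to $4\pi\gamma K_{\Omega,x_1}\varphi(x_1)$, by the standard flux computation for $E$. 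The integral of $\Rki{2}{1}\partial_n\varphi$ over the same sphere vanishes, because $|\Rki{2}{1}|\sim r^{-1}$ while the surface measure is $\sim r^2$.
\item Add the weak form of $\vk{2}$. The $\Gamma_N$ terms cancel, since the Neumann data of $\vk{2}$ are $-\partial_n\Rki{2}{1}$.
\end{enumerate}

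For the regularity claim $z\in W^{1,p}_\Gamma(\Dsf)$ with $1\le p<\frac{3}{2}$: $\vk{2}\in H^1(\Dsf)\subset W^{1,p}(\Dsf)$, and $|\nabla \Rki{2}{1}(\cdot-x_1)|\lesssim|x-x_1|^{-2}$, which lies in $L^p$ near $x_1$ exactly when $2p<3$. The trace condition on $\Gamma$ is immediate, since $w=0$ there by construction.

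The only delicate point is the sign and normalization bookkeeping in step 3. Getting the flux constant right ($4\pi$ versus $-2\pi$) is what distinguishes this case from the two-dimensional lemma, so I would verify it carefully via $\int_{\partial B_r}\partial_n E\,ds=-1$ with the outward normal of the ball.
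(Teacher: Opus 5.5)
Your proposal is correct and follows the paper's proof. You write $\Rki{2}{1}=4\pi\gamma K_{\Omega,x_1}E$, take its distributional Laplacian to obtain the source $4\pi\gamma K_{\Omega,x_1}\delta_{x_1}$, and add the defining problem for $\vk{2}$ so that the zero-order terms and the boundary data cancel. Your excision argument and the check that $\nabla \Rki{2}{1}(\cdot-x_1)\in L^p$ for $p<\frac32$ only supply details the paper leaves implicit. One small remark: with test functions in $W^{1,q}_\Gamma(\Dsf)$, $q>3$, you only need the first Green identity, so the sphere term involving $\partial_n\varphi$ never actually arises.
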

\begin{proof}
    Notice that $\Rki{2}{1}(x) = \gamma K_{\Omega,x_1}\frac{1}{|x|} =  4\pi \gamma K_{\Omega,x_1} E(x)$ with $E(x)=\frac{1}{4\pi|x|}$. Therefore for $x\ne x_1$:
    \[
        -\Delta \Rki{2}{1}(x-x_1) = (4 \pi \gamma K_{\Omega,x_1})(-\Delta E(x-x_1)) =  4 \pi \gamma K_{\Omega,x_1}\delta_{x_1}.
    \]
    From this and the definition of $\vk{2}$ the result follows. 
\end{proof}

\begin{remark}
Notice that we can write the second state derivative as
\begin{equation}
    d_{x_1}^2u_\Omega(x)= \mathfrak d_{x_1}^2u_\Omega(x) + (-\Delta + \gamma)(\mathfrak d_{x_1}^2u_\Omega)(x).
\end{equation}
\end{remark}

\subsection{Formal topological calculus with the state derivatives in dimension $d=2$}
We briefly discuss how the first and second topological state derivative can be used to formally obtain the correct second 
topological derivative of a cost functional $J(\Omega)$ in dimension $d=2$. In the following we assume  $u_r\in H^1(\Dsf)\cap C(\Dsf)$. Let $\Omega\subset \Dsf$ be an open set. For $x_1,x_2\in \Dsf\setminus \partial \Omega$, we set
\begin{equation}
d_{x_1x_2}^2u_\Omega:= \begin{cases}
    d_{x_1x_2}^2u_\Omega & \text{ for } x_1\ne x_2  \quad \text{(Definition~\ref{def:first_and_second})}\\
    d_{x_1}^2u_\Omega & \text{ for } x_1=x_2 \quad \text{(Definition~\ref{def:second_order_state_derivative})}.
\end{cases}
\end{equation}

We now illustrate in a few examples that those second order derivatives behave like usual  derivatives in Banach spaces, obeying the chaing and product rule in dimension $d=2$.

\subsubsection{Second topological derivative of $J_1(\Omega) = \int_\Dsf (u-u_r)^2 \;dx$}
The first order topological derivative can be computed  at $x_1\in \Dsf\setminus\partial \Omega$:
\begin{equation}
DJ(\Omega)(x_1) =    d_{x_1}J_1(\Omega) = \int_\Dsf 2(u-u_r) d_{x_1} u_\Omega\;dx.
\end{equation}
Applying now $d_{x_2}$ for $x_2\in \Dsf\setminus\partial \Omega$ (note we set $d_{x_2}d_{x_1}:= d_{x_1x_2}$):
\begin{align}
    d_{x_1 x_2}^2J_1(\Omega) =  \int_\Dsf 2 (d_{x_1}u_\Omega) (d_{x_2}u_\Omega) + 2(u_\Omega-u_r)d^2_{x_1x_2}u_\Omega \;dx.
\end{align}
We will prove later in Theorem~\ref{thm:expansion_J1_fOmega} that this formula is indeed correct in dimension $d=2$, that means, 
\[
    D^2J(\Omega)(x_1,x_2) = d_{x_2x_1}^2J_1(\Omega) \quad \text{ for all } x_1,x_2\in \Dsf\setminus\partial \Omega.
\]
 For dimension $d=3$ the formula is not correct any longer.

\subsubsection{Second topological derivative of $J_2(\Omega)= \int_{\Gamma_N} (u-u_r)^2\;ds$}
The first topological derivative computes again like $x_1\in \Dsf\setminus\partial \Omega$
\begin{equation}
    DJ(\Omega)(x_1) = d_{x_1} J_2(\Omega) = \int_{\Gamma_N} 2(u-u_r) d_{x_1}u_\Omega\;ds. 
\end{equation}
Applying formally $d_{x_1}$ again we obtain:
\begin{align}
    \begin{split}
    d_{x_1}^2J_2(\Omega) &=  \int_{\Gamma_N} 2 (d_{x_1}u)^2 \;ds + \int_{\Gamma_N} 2(u-u_r) d_{x_1}^2\;ds \\
                         & = \int_{\Gamma_N} 2 (d_{x_1}u)^2 \;ds + \int_{\Gamma_N} 2(u-u_r) \mathfrak d_{x_1}^2u_\Omega\;ds  - \frac{2}{|B|^2}2\pi K_{\Omega,x_1}\underbrace{\int_{{\Gamma_N}} 2(u-u_r)\delta_{x-x_1}\;ds}_{=0}.
\end{split}
\end{align}
Indeed we will show later that (see Theorem~\ref{thm:expansion_J2_fOmega}, (a)):
\begin{equation}
   D^2J_2(\Omega)(x_1,x_1) =  \int_{\Gamma_N} 2 (d_{x_1}u)^2 \;ds + \int_{\Gamma_N} 2(u-u_r) \mathfrak d_{x_1}^2u_\Omega\;ds.
\end{equation}

\subsubsection{Second topological derivative of $J_3(\Omega)= \int_\Dsf u\;dx$}
The first derivative reads
\begin{equation}
    DJ_3(\Omega)(x_1) = d_{x_1} J(\Omega) = \int_\Dsf d_{x_1}u_\Omega\;dx
\end{equation}
and the second reads
\begin{equation}
    d_{x_1}^2 J(\Omega) = \int_{\Dsf} d_{x_1}^2 u_\Omega \;dx,
\end{equation}
which is also correct and indeed $D^2J(\Omega)(x_1,x_1) = d_{x_1}^2 J(\Omega)$. We do not prove this, but it can be checked with the same proof strategy as in Theorem~\ref{thm:expansion_J1_fOmega}.

\subsection{The second topological state derivative at $x_1$ in dimension $d=3$}
In dimension $d=3$ we cannot obtain the second derivative $D^2J(\Omega)(x_1,x_1)$ by applying $d_{x_1}$ twice to $J(\Omega)$. However, we sill have for instance for
\begin{equation}
    J_1(\Omega) = \int_\Dsf (u-u_r)^2\;dx
\end{equation}
that (see Theorem~\ref{thm:expansion_J1_fOmega}, (b) below)
\begin{equation}
    D^2J(\Omega)(x_1,x_1) = \int_\Dsf 2(d_{x_1}u_\Omega)^2\;dx. 
\end{equation}
So this is the term corresponding to the order $\frac{|B_\eps|^2}{2}$. We will see later on that the term (see also Theorem~\ref{thm:expansion_J1_fOmega}, (b) below)
\begin{equation}
    \int_\Dsf 2(u-u_r)d_{x_1}^2 u\;dx
\end{equation}
still appears in the topological expansion, but corresponds to the order $\eps^{-1}\frac{|B_\eps|^2}{2}$, so it is of lower order.

\section{Taylor-like expansion of cost functionals with multiple holes}
Expansions of cost functional with multiple inclusions at once have been studied in several papers before; \cite{a_CALANO_2015a,a_HILANO_2011a}.  Here give a different viewpoint on those expansions.  We apply Theorem~\ref{main:thm_expansion_general} to obtain an expansion with multiple holes for two different cost functionals. 

\subsection{The cost functional $J_1(\Omega)=\int_\Dsf (u_\Omega-u_r)^2\;dx$ with $-\Delta u_\Omega + \gamma u_\Omega = f_\Omega$}
In this section we consider for an open set $\Omega\subset \Dsf$ the cost functional 
\begin{equation}
    J_1(\Omega)  =\int_\Dsf (u_\Omega-u_r)^2\;dx
\end{equation}
subject to $u_\Omega\in H^1(\Omega)$ solves \eqref{eq:state_fOmega}. The target function $u_r$ is assumed to be in $H^1(\Dsf)\cap C(\Dsf)$. 

\subsubsection{Taylor-like expansion of $J_1(\Omega)$ with one hole}

For $d\in \{2,3\}$, $\Omega\subset \Dsf$ open  and  $x_1\in \Dsf\setminus\partial \Omega$ we now compute the expansion in $\eps>0$ of 
\begin{equation}
    J_1(\Omega_\eps(x_1)) = \int_\Dsf (u_\eps-u_r)^2\;dx,
\end{equation}
where $u_\eps = u_{\Omega_\eps(x_1)}$ solves $-\Delta u_\eps +  \gamma u_\eps = f_{\Omega_\eps(x_1)}$ in $\Dsf$ and $u_\eps = g$ on $\Gamma$ and $\partial_n u_\eps = h$ on $\Gamma_N$. Recall that 
\[
    c_{\Omega,x_1}=\sigma_\Omega(x_1)(f_1-f_2).
\]
For later usage, we introduce the adjoint equation:

\begin{definition}
The adjoint $p_\Omega = p\in H^1_\Gamma(\Dsf)$ is defined as the weak  solution to 
\begin{cases2}{eq:adjoint_fOmega}
    -\Delta p_\Omega +\gamma p_\Omega & = -2(u_\Omega-u_r) &&\quad  \text{ in } \Dsf\\
            p_\Omega & = 0 && \quad \text{ on } \Gamma \\
        \partial_n p_\Omega & = 0 && \quad \text{ on } \Gamma_N. 
\end{cases2}
We also introduce the second adjoint $q_{\Omega,x_1} = q\in H^1_\Gamma(\Dsf)$ as solution to 
\begin{cases2}{eq:adjoint_fOmega_q}
    -\Delta q_\Omega + \gamma q_\Omega & = -2d_{x_1}u_\Omega && \quad \text{ in } \Dsf\\
            q_\Omega & = 0 && \quad \text{ on } \Gamma \\
        \partial_n q_\Omega & = 0 && \quad \text{ on } \Gamma_N. 
\end{cases2}
\end{definition}
We note that $q_{\Omega,x_1}$ is smooth in the interior of $\Dsf$ away from the point $x_1$ as $d_{x_1}u_\Omega$ is smooth in $\Dsf$ except at $x_1$.

\begin{theorem}\label{thm:expansion_J1_fOmega}
Let $\Omega\subset \Dsf$ be an open set and $x_1\in \Dsf\setminus \partial \Omega$. 
\begin{itemize}
    \item[(a)] For $d=2$, we have for $\eps>0$ small:
\begin{align}\label{eq:expansion_J1_fOmega}
    \begin{split}
        J_1(\Omega_\eps(x_1)) -J_1(\Omega) =& |B_\eps| \int_{\Dsf} 2(u_\Omega-u_r) d_{x_1} u_\Omega \;dx \\
                                            &+ \frac{|B_\eps|^2}{2} \int_{\Dsf} 2(d_{x_1}u_\Omega)^2 + 2(u_\Omega-u_r)d_{x_1}^2u_\Omega\;dx   + \frR{1}_{\Omega,x_1}(\eps)
\end{split}
\end{align}
or equivalently in terms of the adjoints $p_\Omega,q_{\Omega,x_1}$:
\begin{align}\label{eq:expansion_J1_fOmega2}
    \begin{split}
        J_1(\Omega_\eps(x_1)) -J_1(\Omega) =& - |B_\eps| c_{\Omega,x_1}p_\Omega (x_1) 
        + \frac{|B_\eps|^2}{2} c_{\Omega,x_1}(-q_{\Omega,x_1}(x_1) - \frac{1}{4\pi}\Delta p_\Omega(x_1) )  \\
                                            &  + \frR{1}_{\Omega,x_1}(\eps).
\end{split}
\end{align}
where the remainder satisfies $|\frR{1}_{\Omega,x_1}(\eps)| = o(|B_\eps|^2)$ uniformly in $\Omega$
\item[(b)]  For $d=3$, we have for $\eps>0$ small:
\begin{align}\label{eq:expansion_J1_fOmega_3d}
    \begin{split}
        J_1(\Omega_\eps(x_1)) -J_1(\Omega) =& |B_\eps| \int_{\Dsf} 2(u_\Omega-u_r) d_{x_1} u_\Omega \;dx \\
                                            & + \frac{\eps^{-1}|B_\eps|^2}{2} \int_{\Dsf} 2(u-u_r)d_{x_1}^2u_\Omega\;dx + \frac{|B_\eps|^2}{2} \int_{\Dsf} 2(d_{x_1}u_\Omega)^2 \;dx   + \frR{1}_{\Omega,x_1}(\eps)
\end{split}
\end{align}
or equivalently in terms of the adjoints $p_\Omega,q_{\Omega,x_1}$:
\begin{align}\label{eq:expansion_J1_fOmega2_3d}
    \begin{split}
        J_1(\Omega_\eps(x_1)) -J_1(\Omega) =& -|B_\eps| c_{\Omega,x_1} p_\Omega (x_1)   
        + \frac{|B_\eps|^2}{2} c_{\Omega,x_1}(-q_{\Omega,x_1}(x_1) - \eps^{-1}\frac{3}{20\pi}\Delta p_\Omega(x_1) )  \\
                                            &  + \frR{1}_{\Omega,x_1}(\eps).
\end{split}
\end{align}
where the remainder satisfies $|\frR{1}_{\Omega,x_1}(\eps)| = o(|B_\eps|^2)$ uniformly in $\Omega$. 
\end{itemize}
\end{theorem}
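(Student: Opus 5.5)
The plan is to start from the exact identity
\[
J_1(\Omega_\eps(x_1))-J_1(\Omega)=\int_\Dsf 2(u_\Omega-u_r)(u_\eps-u_\Omega)\,dx+\int_\Dsf (u_\eps-u_\Omega)^2\,dx .
\]
Into it we insert the compound layer expansion of Corollary~\ref{cor:expansion_remainder_D}, (a) for $d=2$ and (b) for $d=3$. We treat the linear term and the quadratic term separately.

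\textbf{Linear term.} Write $u_\eps-u_\Omega=\eps^2 W_1^\eps+\eps^4W_2^\eps+\Ce_\Omega(\eps)$, where $W_1^\eps=\Uk{1}\circ T_\eps^{-1}+\vk{1}+\Rki{1}{1}(\eps)$ and $W_2^\eps$ is defined analogously. Outside $B_\eps(x_1)$ we have exactly $\eps^2W_1^\eps=|B_\eps|\,d_{x_1}u_\Omega$, because $\Uk{1}=\Rki{1}{1}$ on $\VR^d\setminus B$ and $\Rki{1}{1}((x-x_1)/\eps)+\Rki{1}{1}(\eps)=\Rki{1}{1}(x-x_1)$. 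Hence
\[
\int_\Dsf 2(u_\Omega-u_r)\eps^2W_1^\eps\,dx = |B_\eps|\int_\Dsf 2(u_\Omega-u_r)d_{x_1}u_\Omega\,dx + \eps^{2+d}\int_B 2(u_\Omega-u_r)\circ T_\eps\,(\Uk{1}-\Rki{1}{1})\,dx .
\]
The correction integral is handled by Taylor expanding $u_\Omega-u_r$ at $x_1$; this requires enough regularity, which holds locally since $f_\Omega$ is constant near $x_1$. The odd moments vanish by Remark~\ref{rem:int_nabla_f_xk}. The zeroth moment gives $2(u_\Omega-u_r)(x_1)\int_B(\Uk{1}-\Rki{1}{1})$, which by the computation of $K_{\Omega,x_1}$ equals a multiple of $K_{\Omega,x_1}$. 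Together with the $\eps^4W_2^\eps$ term, this produces precisely the $\delta$-part of $d^2_{x_1}u_\Omega$ in Definition~\ref{def:second_order_state_derivative}, at order $|B_\eps|^2$ when $d=2$ and at order $\eps^{-1}|B_\eps|^2$ when $d=3$; the scaling $\eps^{5}$ versus $\eps^{6}$ is exactly where the $\eps^{-1}$ comes from. For the $\eps^4W_2^\eps$ term we argue similarly: away from $B_\eps$ we replace $\Uk{2}\circ T_\eps^{-1}$ by $\Rki{2}{1}(\cdot-x_1)-\Rki{2}{1}(\eps)$ and bound the error by the decay $\Uk{2}-\Rki{2}{1}=O(|x|^{1-d})$, in $L^1$ after scaling as in Lemma~\ref{lem:bound_rk2_Uk2}. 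This gives $\frac{|B_\eps|^2}{2}\int 2(u_\Omega-u_r)\mathfrak d^2_{x_1}u_\Omega$, with $\eps^{-1}$ again in $d=3$. The remainder $\Ce_\Omega(\eps)$ contributes $O(\eps^5\sqrt{-\ln\eps})$ or $O(\eps^{11/2})$, hence $o(|B_\eps|^2)$, uniformly in $\Omega$.

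\textbf{Quadratic term.} We have $\int(u_\eps-u_\Omega)^2=|B_\eps|^2\int_{\Dsf\setminus B_\eps}(d_{x_1}u_\Omega)^2+\eps^4\int_{B_\eps}(W_1^\eps)^2+$ higher order terms. Since $d_{x_1}u_\Omega\in L^2$ (a logarithmic or $1/|x|$ singularity is square integrable for $d\le 3$), the first piece tends to $|B_\eps|^2\int_\Dsf(d_{x_1}u_\Omega)^2$ with error $o(|B_\eps|^2)$. The inner piece is $O(\eps^{4+d}\ln^2\eps)$, and the cross terms with $\eps^4W_2^\eps$ and $\Ce_\Omega$ are bounded by Cauchy–Schwarz. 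Uniformity in $\Omega$ follows because every constant depends on $\Omega$ only through $c_{\Omega,x_1}$, and $|\sigma_\Omega|=1$.

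\textbf{Adjoint form.} To pass to the adjoint form we test the adjoint equation with $d_{x_1}u_\Omega$, which gives $\int 2(u_\Omega-u_r)d_{x_1}u_\Omega=-c_{\Omega,x_1}p_\Omega(x_1)$ (the sign convention $c$ absorbs $|B|$). Likewise $\int 2(d_{x_1}u_\Omega)^2=-c_{\Omega,x_1}q_{\Omega,x_1}(x_1)$, and $\int 2(u_\Omega-u_r)\mathfrak d^2_{x_1}u_\Omega$ equals $p_\Omega(x_1)$ times the right-hand side constant in \eqref{eq:frak_d}. Combining this with the $\delta$-term $2(u_\Omega-u_r)(x_1)=\Delta p_\Omega(x_1)-\gamma p_\Omega(x_1)$, the $\gamma p_\Omega(x_1)$ contributions cancel. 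What remains is $-\frac{1}{4\pi}\Delta p_\Omega(x_1)$ for $d=2$, and $-\frac{3}{20\pi}$ in $d=3$, as in Lemma~\ref{lem:example_int_f}.

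\textbf{Main obstacle.} The hardest step is the careful bookkeeping of the inner-region contributions and of the replacement $\Uk{2}\approx\Rki{2}{1}$. One must show that the errors are genuinely $o(\eps^{2d})$ (the logarithms in $d=2$ are the danger), and that the constants assemble exactly into the $2\pi K$, respectively $4\pi K$, $\delta$-coefficients.
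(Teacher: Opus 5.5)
Your treatment of the quadratic term, and your whole argument in $d=2$, are fine. Part (b), however, does not follow from your linear-term argument. In $d=3$ you need $\int_\Dsf 2(u_\Omega-u_r)(u_\eps-u_\Omega)\,dx$ up to $o(|B_\eps|^2)=o(\eps^6)$; that is, you must show it has no $\eps^6$ contribution beyond the $\eps^3$ and $\eps^5$ terms. Corollary~\ref{cor:expansion_remainder_D}(b) only gives $\|\Ce_\Omega(\eps)\|_{H^1(\Dsf)}=O(\eps^{11/2})$, so the contribution $\int_\Dsf 2(u_\Omega-u_r)\Ce_\Omega(\eps)\,dx$ is only $O(\eps^{11/2})$. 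Your sentence ``$O(\eps^{11/2})$, hence $o(|B_\eps|^2)$'' is therefore false in $d=3$; the remark after that corollary flags exactly this. The same problem affects your replacement of $\Uk{2}\circ T_\eps^{-1}$ by $\Rki{2}{1}\circ T_\eps^{-1}$ away from $B_\eps(x_1)$. With only the decay bound $|\Uk{2}-\Rki{2}{1}|\le C|x|^{-2}$, the error is of size $\eps^{7}\int_{\Dsf_\eps^{-1}\setminus B}|x|^{-2}\,dx\sim\eps^{6}$, again not $o(\eps^6)$. With the estimates you invoke, the compound layer expansion is simply not accurate enough to be inserted into a term that is linear in $u_\eps-u_\Omega$.

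The paper avoids this by not using the compound layer expansion for the linear term at all. Testing the adjoint equation \eqref{eq:adjoint_fOmega} with $u_\eps-u_\Omega$, and the equation for $u_\eps-u_\Omega$ with $p_\Omega$, gives the exact identity $\int_\Dsf 2(u_\Omega-u_r)(u_\eps-u_\Omega)\,dx=-c_{\Omega,x_1}\int_{B_\eps(x_1)}p_\Omega\,dx$. A Taylor expansion of $p_\Omega$ at $x_1$ then gives the expansion with remainder $O(\eps^{d+4})$: odd moments vanish, and the second moment becomes $\frac14\Delta p_\Omega(x_1)\int_B(|x|^2-1)\,dx$. The form with $d_{x_1}u_\Omega$ and $d^2_{x_1}u_\Omega$ is recovered afterwards by the duality computation you describe in your last step. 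The compound layer expansion is used only for $\int_\Dsf(u_\eps-u_\Omega)^2\,dx$, where $\Ce_\Omega(\eps)$ enters squared or multiplied by $|B_\eps|d_{x_1}u_\Omega=O(\eps^3)$, so $O(\eps^{11/2})$ is enough there.

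If you want to keep your route, you need a sharper remainder estimate. For instance, $\gamma(\Rki{1}{1}-\Uk{1})$ is radial and supported in $\overline B$, so $\Uk{2}=\Rki{2}{1}$ holds exactly on $\VR^3\setminus B$. This removes the boundary terms in the equation for $\eps\Uk{3}_\eps$ and improves the bound to $\|\Ce_\Omega(\eps)\|_{H^1(\Dsf)}=O(\eps^{13/2})$. That is an additional argument your proposal does not supply.
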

\begin{remark}
    \begin{itemize}
        \item[(a)] 
We notice that the formula \eqref{eq:expansion_J1_fOmega} is what one would expect for a Taylor expansion. This formula does unfortunately not hold for dimension $d=3$ as can be seen from \eqref{eq:expansion_J1_fOmega2_3d}. In fact, the term 
\begin{equation}
    \int_\Dsf 2(d_{x_1}u_\Omega)^2\;dx
\end{equation}
corresponds to the order $|B_\eps|^2\sim\eps^6$, but
\begin{equation}
  \int_\Dsf 2(u-u_r)d_{x_1}^2u_\Omega\;dx   
\end{equation}
corresponds to the order $\eps^{-1}|B_\eps|^2 \sim \eps^5$. This behaviour of dimension dependent derivative could be already observed in  the initial example $J(\Omega) = \int_\Omega f(x)\;dx$ of this paper, where the term corresponding to $|B_\eps|^2$ in dimension $d=3$ was zero. 
\item[(b)] We observe in \eqref{eq:expansion_J1_fOmega2} and \eqref{eq:expansion_J1_fOmega2_3d} the appearance of the constant $\frac{1}{4\pi}\Delta p_\Omega(x_1)$ for $d=2$ and 
    $\frac{3}{20\pi}\Delta p_\Omega(x_1)$ which also appears in the example $J(\Omega) = \int_\Omega f\;dx$ in Lemma~\ref{lem:example_int_f}. There (see Lemma~\ref{lem:example_int_f}) the constant is $\frac{1}{4\pi}\Delta f(x_1)$ and $\frac{3}{20\pi}\Delta f(x_1)$ corresponding to the order 
    $|B_\eps|^2/2$ for $d=2$ and $\eps^{-1}|B_\eps|^2/2$ for $d=3$. This suggest that this term is a geometric term and therefore one  would expect such a term also for other PDE constraints to appear. 
\end{itemize}
\end{remark}

\begin{proof}
We prove the previous theorem in two steps. The starting point is that we can write for $\eps>0$:
\begin{equation}\label{eq:split_cost_J1}
        J(\Omega_\eps(x_1))-J(\Omega) = \int_\Dsf (u_\eps-u)^2\;dx + \int_\Dsf 2(u-u_r)(u_\eps-u)\;dx. 
\end{equation}
Now combining Lemma~\ref{lem:aux_dJ1} and Lemma~\ref{lem:ueps_uOmega_J1_expansion} the applied to the previous equation proves (a) and (b). 
\end{proof}

\begin{lemma}\label{lem:aux_dJ1}
Let $d\in \{2,3\}$ and $\Omega\subset \Dsf$ be an open set. Fix $x_1\in \Dsf\setminus\partial \Omega$.
    \begin{itemize}
        \item[(a)] 
    We have for  $\eps>0$:
\begin{align}
    \int_\Dsf 2(u-u_r)(u_\eps-u)\;dx =& |B_\eps(x_1)| \int_\Dsf 2(u-u_r)d_{x_1}u_\Omega\;dx \\
                                      & + \eps^{d-2}|B_\eps|^2 \int_{\Dsf} 2(u-u_r) d_{x_1}^2 u_\Omega \;dx + \Ct_{\Omega,x_1}(\eps). 
\end{align}
The remainder satisfies $|\Ct_{\Omega,x_1}(\eps)| = o(\eps^{d+4})$ uniformly in $\Omega$. 
\item[(b)] The expansion in (a) is equivalent to
    \begin{align}\label{eq:expansion_quadratic_int_2ur}
    \begin{split}
        \int_\Dsf 2(u-u_r)(u_\eps-u)\;dx =&  -|B_\eps| c_{\Omega,x_1} p_\Omega(x_1) \\
                                      & +  \frac{|B_\eps|^2}{2} \eps^{2-d} \left(\Delta p_\Omega(x_1)c_{\Omega,x_1} \frac{2}{|B|^2} \frac14 \int_B |x|^2-1\;dx \right) + \Ct_{\Omega,x_1}(\eps),
\end{split}
\end{align}
where 
\begin{equation}\label{eq:int_x2_1}
     \frac14\int_B |x|^2-1\;dx = \begin{cases}
        -\frac{\pi}{8} & \text{ for } d=2 \\
        -\frac{2\pi}{15} & \text{ for } d=3.
    \end{cases}
\end{equation}
\end{itemize}
\end{lemma}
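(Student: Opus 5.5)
The plan is to insert the compound layer expansion of Corollary~\ref{cor:expansion_remainder_D} directly into the linear term $\int_\Dsf 2(u-u_r)(u_\eps-u)\,dx$. The near-field contributions on $B_\eps(x_1)$ are then handled by a Taylor argument, and the far-field parts are identified with $d_{x_1}u_\Omega$ and $\mathfrak d^2_{x_1}u_\Omega$. Throughout we write $w:=2(u-u_r)$. For $d=2$, Corollary~\ref{cor:expansion_remainder_D}(a) gives
\begin{equation*}
u_\eps-u = \eps^2\big(\Uk{1}\circ T_\eps^{-1}+\vk{1}+\Rki{1}{1}(\eps)\big)+\eps^4\big(\Uk{2}\circ T_\eps^{-1}+\vk{2}+\Rki{2}{1}(\eps)\big)+\Ce_\Omega(\eps).
\end{equation*}
The $d=3$ case runs analogously, with the $\eps$-weighted $\vk{i}$ terms and no $\Rki{i}{1}(\eps)$ constants. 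The remainder is easy: $\int w\,\Ce_\Omega(\eps)\,dx \le \|w\|_{L^2}\|\Ce_\Omega(\eps)\|_{L^2}$. This is $O(\eps^5\sqrt{-\ln\eps})=o(\eps^{6-\delta})$ for $d=2$ and $O(\eps^{11/2})$ for $d=3$. Both bounds are uniform in $\Omega$, since all constants depend on $\Omega$ only through $c_{\Omega,x_1}$. The order claimed in the statement may need to be read up to such log factors.

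\emph{First-order term.} On $\Dsf\setminus\overline{B_\eps(x_1)}$ we have $\Uk{1}=\Rki{1}{1}$, and $\Rki{1}{1}(T_\eps^{-1}x)+\Rki{1}{1}(\eps)=\Rki{1}{1}(x-x_1)$ in $d=2$. Hence the $\eps^2$-bracket equals $|B|\,d_{x_1}u_\Omega$ exactly outside the ball, so that
\begin{equation*}
\eps^2\int_\Dsf w(\ldots)\,dx = |B_\eps|\int_\Dsf w\,d_{x_1}u_\Omega\,dx + \eps^2\int_{B_\eps(x_1)} w\,\big(\Uk{1}\circ T_\eps^{-1}-\Rki{1}{1}\circ T_\eps^{-1}\big)\,dx .
\end{equation*}
We rescale the ball integral by $x=T_\eps(y)$ and Taylor-expand $w(x_1+\eps y)$. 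The zeroth-order term gives $\eps^{d+2}w(x_1)\int_B(\Uk{1}-\Rki{1}{1})$, which is exactly of the form that produced $K_{\Omega,x_1}$. The first-order term vanishes by the symmetry argument of Remark~\ref{rem:int_nabla_f_xk}. The second-order term is $O(\eps^{d+4})$, which is admissible.

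\emph{Second-order term.} In the same way, $\Uk{2}\circ T_\eps^{-1}+\Rki{2}{1}(\eps)$ differs from $\Rki{2}{1}(x-x_1)$ only by $(\Uk{2}-\Rki{2}{1})\circ T_\eps^{-1}$. This difference is $O(|y|^{1-d})$ at infinity and bounded on $B$. Its weighted integral against $w$ over $\Dsf$ is therefore $\eps^d\int_{\Dsf_\eps^{-1}}w\circ T_\eps\,(\Uk{2}-\Rki{2}{1})$. After multiplying by $\eps^4$, this is $o(\eps^{d+4})$ for $d=3$ and $O(\eps^{6}\ln\eps^{-1})$ for $d=2$, using Lemma~\ref{lem:bound_rk2_Uk2} and Cauchy--Schwarz. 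Thus $\eps^4$ times the bracket reproduces $\tfrac{|B_\eps|^2}{2}\int w\,\mathfrak d^2_{x_1}u_\Omega$ up to the required order, with the extra factor $\eps$ for $d=3$ accounting for the stated $\eps^{d-2}$ scaling.

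\emph{Matching the delta term.} The leftover $\eps^{d+2}w(x_1)\int_B(\Uk{1}-\Rki{1}{1})$ from the first-order term must be identified with the $\delta$-part of $d^2_{x_1}u_\Omega$, namely $\mp\frac{2}{|B|^2}c\pi K_{\Omega,x_1}w(x_1)$. This follows from the explicit constants of Subsection~\ref{subsec:explicit_Rki} and Remark~\ref{rem:const_int_x2_1_KOmega}, checking the normalizations $|B_\eps|^2/2$ versus $\eps^{d-2}|B_\eps|^2$.

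For (b), note that $-\Delta p+\gamma p=-w$, with $p$ and $d_{x_1}u_\Omega$ satisfying homogeneous boundary conditions. Testing the equation for $d_{x_1}u_\Omega$ with $p$ gives $\int w\,d_{x_1}u_\Omega=-c_{\Omega,x_1}p(x_1)$. For the second-order term, the equation \eqref{eq:frak_d} for $\mathfrak d^2_{x_1}u_\Omega$ gives $\int w\,\mathfrak d^2=(\text{const})\,\gamma K\,p(x_1)$. This combines with $-w(x_1)=-\Delta p(x_1)+\gamma p(x_1)$ from the delta term, so the $\gamma p(x_1)$ parts cancel and only $\Delta p_\Omega(x_1)\,c_{\Omega,x_1}\frac{2}{|B|^2}\frac14\int_B(|x|^2-1)$ remains.

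\textbf{Main obstacle.} I expect the main difficulty to be the careful bookkeeping of the constants and dimension-dependent powers of $\eps$ that makes this cancellation exact. Regularity of $w$ near $x_1$ is needed for the Taylor step, and I would assume it by local elliptic regularity.
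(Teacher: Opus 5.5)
Your structural bookkeeping is consistent: the far-field identification with $d_{x_1}u_\Omega$ and $\mathfrak d^2_{x_1}u_\Omega$, the near-field constant $\eps^{d+2}w(x_1)\int_B(\Uk{1}-\Rki{1}{1})\,dx$ producing the $\delta$-part, and your equivalence argument for (b), which is essentially the paper's. The gap is in the error terms, and in $d=3$ it is fatal.

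Corollary~\ref{cor:expansion_remainder_D} only gives $\|\Ce_\Omega(\eps)\|_{L^2(\Dsf)}=O(\eps^{11/2})$ for $d=3$. Hence $\int_\Dsf w\,\Ce_\Omega(\eps)\,dx=O(\eps^{11/2})$, which is not even $o(\eps^6)=o(|B_\eps|^2)$. This is not a matter of log factors: the error lies strictly between the $\eps^{-1}|B_\eps|^2$ and $|B_\eps|^2$ levels. The paper itself remarks after Corollary~\ref{cor:expansion_remainder_D} that in $d=3$ this bound is coarser than $|B_\eps|^2$.

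The same deficit appears in your far-field correction. You dropped the factor $\|w\circ T_\eps\|_{L^2(\Dsf_\eps^{-1})}=\eps^{-d/2}\|w\|_{L^2(\Dsf)}$. Cauchy--Schwarz with Lemma~\ref{lem:bound_rk2_Uk2} therefore gives $\eps^{4+d/2}\|\Rki{2}{1}-\Uk{2}\|_{L^2(\Dsf_\eps^{-1})}$. That is $O(\eps^5\sqrt{-\ln\eps})$ for $d=2$ and $O(\eps^{11/2})$ for $d=3$, not $O(\eps^6\ln\eps^{-1})$ and $o(\eps^7)$ as you claim. Also, $\eps^5\sqrt{-\ln\eps}$ is not $o(\eps^{6-\delta})$. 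So your route yields $o(|B_\eps|^2)$ only for $d=2$. For $d=3$ neither the lemma nor Theorem~\ref{thm:expansion_J1_fOmega}(b) follows.

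The missing idea is that the linear term needs no expansion of $u_\eps$ at all. Test the equation for $u_\eps-u$ with $p_\Omega$, and the adjoint equation \eqref{eq:adjoint_fOmega} with $u_\eps-u$. This gives the exact identity
\[
\int_\Dsf 2(u-u_r)(u_\eps-u)\,dx=-c_{\Omega,x_1}\int_{B_\eps(x_1)}p_\Omega\,dx .
\]
You already use this mechanism for $\int_\Dsf w\,d_{x_1}u_\Omega\,dx$ in (b).

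Now Taylor-expand $p_\Omega$ at $x_1$ to fourth order:
\begin{itemize}
\item the odd terms vanish by Remark~\ref{rem:int_nabla_f_xk};
\item the quadratic term becomes $\frac14\Delta p_\Omega(x_1)\int_B(|x|^2-1)\,dx$ after writing $x=\frac12\nabla(|x|^2-1)$ and integrating by parts;
\item the remainder is $O(\eps^{d+4})=o(|B_\eps|^2)$.
\end{itemize}
This is (b) directly, and (a) then follows from the equivalence argument. That is how the paper proceeds.

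Note also the correct weight of the second-order term. As in (b) and Theorem~\ref{thm:expansion_J1_fOmega}, it is $\frac{|B_\eps|^2}{2}\eps^{2-d}$; in $d=3$ your far-field term carries $\eps^5=\eps^{-1}\eps^6$. So you should not match it to $\eps^{d-2}|B_\eps|^2$.

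If you want to keep the compound-layer route, you need sharper information than Corollary~\ref{cor:expansion_remainder_D} provides. For example, $\Uk{2}=\Rki{2}{1}$ exactly on $\VR^d\setminus B$ because its source is radial; with that you would re-derive the estimate for $\Uk{3}_\eps$.
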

\begin{proof}
Throughout the proof we set $u_\eps := u_{\Omega_\eps(x_1)}$ and $u:=u_\Omega$.

\emph{ad (b):} Starting from \eqref{eq:expansion_quadratic_int_2ur} and using the adjoint equation $\Delta p(x_1) = \gamma u(x_1) + 2(u-u_r)$ we obtain 
\begin{equation}
    \Delta p_\Omega(x_1)c_{\Omega,x_1} \frac{2}{|B|^2} \frac14 \int_B |x|^2-1\;dx = (\gamma p(x_1) + 2(u-u_r)(x_1)) \frac{2}{|B|^2} c_{\Omega,x_1}\frac14 \int_B |x|^2-1\;dx.
\end{equation}
Therefore taking into account Remark~\ref{rem:const_int_x2_1_KOmega} we obtain:
\begin{equation}\label{eq:c_Omega_K_Omega}
     c_{\Omega,x_1} \frac{2}{|B|^2} \frac14 \int_B |x|^2-1\;dx =   \frac{2}{|B|^2}
\begin{cases}
        2\pi K_{\Omega,x_1} & \text{ for } d=2\\
        -4\pi K_{\Omega,x_1} & \text{ for } d=3.
    \end{cases}
\end{equation}
Now using the adjoint \eqref{eq:adjoint_fOmega} and \eqref{eq:frak_d} with $z:= \mathfrak d^2_{x_1} u_\Omega$ for $d=2$:
\begin{align}
    c_{\Omega,x_1} \frac{2}{|B|^2} 2\pi K_{\Omega,x_1}\gamma p(x_1) \stackrel{\eqref{eq:frak_d}}{=} -\int_{\Dsf} \nabla z\cdot \nabla p + \gamma z p\;dx \stackrel{\eqref{eq:adjoint_fOmega}}{=} \int_\Dsf 2(u-u_r) z\;dx
\end{align}
and similarly for $d=3$:
\begin{align}
    -c_{\Omega,x_1} \frac{2}{|B|^2} 4\pi K_{\Omega,x_1}\gamma p(x_1) \stackrel{\eqref{eq:frak_d_3d}}{=} -\int_{\Dsf} \nabla z\cdot \nabla p + \gamma z p\;dx \stackrel{\eqref{eq:adjoint_fOmega}}{=} \int_\Dsf 2(u-u_r) z\;dx.
\end{align}
Finally we note that we can write by definition owing to \eqref{eq:c_Omega_K_Omega}:
\begin{align}
    \begin{split}
        2(u-u_r)(x_1)) \frac{2}{|B|^2} c_{\Omega,x_1}\frac14 \int_B |x|^2-1\;dx   \stackrel{\eqref{eq:c_Omega_K_Omega}}{=} &   \frac{2}{|B|^2}C_d K_{\Omega,x_1}2(u-u_r)(x_1) \\
        =& \frac{2}{|B|^2} C_d K_{\Omega,x_1}\int_\Dsf 2(u-u_r) \delta_{x-x_1} dx 
    \end{split}
\end{align}
with $C_d=2\pi$ for $d=2$ and $C_d = -4\pi$ for $d=3$. Therefore the equivalence  follows recalling that 
\begin{equation}
    d_{x_1}^2u_\Omega = \begin{cases}
        \mathfrak d^2_{x_1} u_{\Omega}(x)  + \frac{2}{|B|^2} 2\pi K_{\Omega,x_1} \delta_{x-x_1} & \text{ for } d=2\\
        \mathfrak d^2_{x_1} u_{\Omega}(x)  - \frac{2}{|B|^2} 4\pi K_{\Omega,x_1} \delta_{x-x_1} & \text{ for } d=3.
    \end{cases}
\end{equation}
This finishes the proof of the equivalence of (a) and (b).

\emph{ad (a):} We can write
    \begin{equation}
        J(\Omega_\eps(x_1))-J(\Omega) = \int_\Dsf (u_\eps-u)^2\;dx + \int_\Dsf 2(u-u_r)(u_\eps-u)\;dx. 
    \end{equation}
We first expand the term $\int_\Dsf 2(u-u_r)(u_\eps-u)\;dx$.  The difference $u_\eps - u$ satisfies with the adjoint variable $p$:
\begin{equation}
\int_\Dsf\nabla (u_\eps-u)\cdot \nabla p + \gamma (u_\eps-u)p \;dx= c_{\Omega,x_1} \int_{B_\eps(x_1)}p\;dx
\end{equation}
and testing the adjoint equation \eqref{eq:adjoint_fOmega} with $u_\eps -u$ gives:
\begin{equation}
\int_{\Dsf}\nabla p\cdot \nabla (u_\eps-u) + p(u_\eps - u)\;dx =-\int_\Dsf 2(u-u_r)(u-u_\eps)\;dx.
\end{equation}
Therefore
\begin{equation}
    \int_\Dsf 2(u-u_r)(u_\eps-u)\;dx = -c_{\Omega,x_1}\int_{B_\eps(x_1)} p\;dx.
\end{equation}
We recall $c_{\Omega,x_1} = \sigma_\Omega(x_1) (f_1-f_2)$. 
Now since 
\begin{equation}
    \int_{B_\eps(x_1)} p\;dx = \eps^d\int_B p(x_1+\eps x)\;dx
\end{equation}
a Taylor expansion in $\eps=0$ and noting that $\int_B \nabla^kp(x_1)[x]^k\;dx=0$ for $k$ odd (see Remark~\ref{rem:int_nabla_f_xk}), yields
\begin{align}
    \int_{B_\eps(x_1)} p\;dx =& |B_\eps(x_1)|p(x_1) + \eps^{d+2} \frac12 \int_B \nabla^2 p(x_1)x\cdot x\;dx \\
                              & +  \underbrace{\eps^{d+4}\frac{1}{4!}\int_0^1\int_B \nabla^4 p(x_1+sx)[x]^4\;dx\;ds}_{=:\Ct_{\Omega,x_1}(\eps)}.
\end{align}
It is readily checked that $\Ct_{\Omega,x_1}(\eps) = O(\eps^{d+4})=o(|B_\eps|^2)$ uniformly in $\Omega$. 
Since $x = \frac12 \nabla (|x|^2-1)$ and $\Div(\nabla^2 p(x_1)x)=\Delta p(x_1)$, we have
\begin{align}\label{eq:partial_int_nablap}
    \begin{split}
    -\frac12\int_B\nabla^2 p(x_1)x\cdot x \;dx &=  -\frac14\int_B \nabla^2 p(x_1)x\cdot \nabla(|x|^2-1)\;dx \\
                                        & = \frac14 \Delta p(x_1)\int_B (|x|^2-1)  \;dx.
\end{split}
\end{align}
Combining the previous equations yields \eqref{eq:expansion_quadratic_int_2ur}. The equation \eqref{eq:int_x2_1} is readily checked using spherical/polar coordinates.

\end{proof}

Next we turn our attention to the first term on the right hand side of \eqref{eq:split_cost_J1}.

\begin{lemma}\label{lem:ueps_uOmega_J1_expansion}
    Let $d\in \{2,3\}$, $\Omega\subset \Dsf$ be an open set and $x_1\in \Dsf\setminus\partial \Omega$.
    \begin{itemize}
        \item[(a)] We have the expansion
            \begin{equation}\label{eq:expansion_J1_square_term}
                \int_\Dsf (u_{\Omega_\eps(x_1)} - u_\Omega)^2\;dx = |B_\eps|^2\int_\Dsf (d_{x_1}u_\Omega)^2\;dx + \Ct_{\Omega}(\eps),
\end{equation}
where $|\Ct_\Omega(\eps)|= o(|B_\eps|^2)$ uniformly in $\Omega$. 
\item[(b)] The expansion in (a) is equivalent to 
            \begin{equation}\label{eq:expansion_J1_square_term2}
                \int_\Dsf (u_{\Omega_\eps(x_1)} - u_\Omega)^2\;dx = -\frac{|B_\eps|^2}{2} c_{\Omega,x_1} q_{\Omega,x_1}(x_1) + \Ct_{\Omega}(\eps).
\end{equation}
\end{itemize}
\end{lemma}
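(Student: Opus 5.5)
The plan is to prove (a) by inserting the compound layer expansion of Corollary~\ref{cor:expansion_remainder_D} and splitting $\Dsf$ into the hole $B_\eps(x_1)$ and its complement. Afterwards (b) follows from (a) by testing the equation for $q_{\Omega,x_1}$ with $d_{x_1}u_\Omega$.

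\emph{Identifying the leading term.} From Corollary~\ref{cor:expansion_remainder_D}, outside the hole we have $\Uk{1}\circ T_\eps^{-1}=\Rki{1}{1}(\cdot-x_1)-\Rki{1}{1}(\eps)$ (for $d=3$: $\Uk{1}(T_\eps^{-1}x)=\eps\Rki{1}{1}(x-x_1)$). Hence on $\Dsf\setminus \overline{B_\eps(x_1)}$ the leading part of $u_\eps-u_\Omega$ equals exactly $\eps^d(\Rki{1}{1}(\cdot-x_1)+\vk{1}) = |B_\eps|\,d_{x_1}u_\Omega$, by the two lemmas expressing $d_{x_1}u_\Omega$ through $\Rki{1}{1}$ and $\vk{1}$ (note $|B_\eps| = |B|\eps^d$). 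I will therefore write
\[
u_\eps-u_\Omega = |B_\eps|\,d_{x_1}u_\Omega + w_\eps \quad \text{in } \Dsf,
\]
where $w_\eps$ collects three pieces. The first is the second-order layer $\eps^4(\Uk{2}\circ T_\eps^{-1}+\vk{2}+\Rki{2}{1}(\eps))$ (with the extra factor $\eps$ on $\vk{2}$ for $d=3$). The second is the remainder $\Ce_\Omega(\eps)$. The third is the inner correction $\eps^2(\Uk{1}\circ T_\eps^{-1}-\Rki{1}{1}(\cdot-x_1)+\Rki{1}{1}(\eps))$, which is supported in $B_\eps(x_1)$.

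\emph{Estimating $w_\eps$ in $L^2$.} The main task is to show $\|w_\eps\|_{L^2(\Dsf)}=o(|B_\eps|)=o(\eps^d)$.
\begin{itemize}
\item The remainder satisfies $\|\Ce_\Omega(\eps)\|_{L^2}\le C\eps^5\sqrt{-\ln\eps}$ for $d=2$ and $C\eps^{11/2}$ for $d=3$, so it is $o(\eps^d)$ in both cases.
\item The inner correction is bounded by $C\eps^2(1+|\ln(|x-x_1|/\eps)|)$ on $B_\eps(x_1)$ for $d=2$, and by $C\eps^2(\eps/|x-x_1|)$ for $d=3$. Integrating over a set of measure of order $\eps^d$, its $L^2$ norm is $O(\eps^{2+d/2})=o(\eps^d)$ for $d\in\{2,3\}$.
\item For the second-order layer, $\vk{2}$ is fixed in $H^1$ and $\Rki{2}{1}(\eps)=O(\ln\eps)$, which gives $O(\eps^4|\ln\eps|)$. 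For $\Uk{2}\circ T_\eps^{-1}$ I use Lemma~\ref{lem:change_variables_eps}(a) and the growth $\Uk{2}\sim\Rki{2}{1}$, as in Lemma~\ref{lem:bound_rk2_Uk2}. Its $L^2(\Dsf)$ norm is $\eps^{d/2}\|\Uk{2}\|_{L^2(\Dsf_\eps^{-1})}$, which is $O(\eps|\ln\eps|)$ for $d=2$ and $O(\eps)$ for $d=3$ (using $1/|x|$ on a ball of radius $c\eps^{-1}$). Together with the prefactor $\eps^4$ this is $o(\eps^d)$.
\end{itemize}
All constants depend on $\Omega$ only through $c_{\Omega,x_1}$, so the bounds are uniform in $\Omega$.

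\emph{Expanding the square.} Now
\[
\int_\Dsf (u_\eps-u_\Omega)^2\,dx = |B_\eps|^2\int_\Dsf (d_{x_1}u_\Omega)^2\,dx + 2|B_\eps|\int_\Dsf d_{x_1}u_\Omega\, w_\eps\,dx + \|w_\eps\|^2_{L^2}.
\]
Since $d_{x_1}u_\Omega\in W^{1,p}\subset L^2$ for $d\le 3$ (the singularity is $\ln|x|$ or $1/|x|$), Cauchy--Schwarz gives that the cross term is $|B_\eps|\,o(|B_\eps|)$, and the last term is also $o(|B_\eps|^2)$. This proves (a).

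\emph{Part (b).} I test the equation for $q_{\Omega,x_1}$ with $d_{x_1}u_\Omega$ and the equation for $d_{x_1}u_\Omega$ (right-hand side $c_{\Omega,x_1}\delta_{x_1}$) with $q_{\Omega,x_1}$. This gives
\[
-2\int_\Dsf (d_{x_1}u_\Omega)^2\,dx = c_{\Omega,x_1}\,q_{\Omega,x_1}(x_1).
\]
The point evaluation is meaningful because $q_{\Omega,x_1}$ is continuous: its source term lies in $L^2$, so $q_{\Omega,x_1}\in H^2_{loc}\subset C$ for $d\le3$. This is justified by a mollification of the Dirac mass, or by duality in $W^{1,p}$–$W^{1,p'}$.

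The main obstacle I expect is the uniform $L^2$ control of $\Uk{2}\circ T_\eps^{-1}$ near and away from the hole. This needs the asymptotics of $\Uk{2}$, and for $d=2$ it requires tracking the logarithms carefully.
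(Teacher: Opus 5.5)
Your proof is correct and is essentially the paper's argument: the compound layer expansion of Corollary~\ref{cor:expansion_remainder_D}, the identification of the leading part off the hole with $|B_\eps|\,d_{x_1}u_\Omega$, $L^2$ bounds on the remaining pieces, and cross-testing the equations for $q_{\Omega,x_1}$ and $d_{x_1}u_\Omega$ for (b). The only organizational difference is that you absorb the inner region into one global remainder $w_\eps$ and close with Cauchy--Schwarz, whereas the paper splits $\Dsf$ into $B_\eps(x_1)$ and its complement; one harmless slip is that for $d=2$ one has $\|\Uk{2}\circ T_\eps^{-1}\|_{L^2(\Dsf)}=O(|\ln\eps|)$ rather than $O(\eps|\ln\eps|)$ (since $\|\Uk{2}\|_{L^2(\Dsf_\eps^{-1})}\sim\eps^{-1}|\ln\eps|$), which after the prefactor $\eps^4$ is still $o(\eps^2)$.
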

\begin{proof}
    Throughout the proof we set $u_\eps := u_{\Omega_\eps(x_1)}$ and $u:=u_\Omega$.

    \emph{ad (b):}  The equivalence follows by testing the weak formulation of $q_{\Omega,x_1}$ with $d_{x_1}u_\Omega$ and testing the weak formulation of $d_{x_1}u_\Omega$ with $q_{\Omega,x_1}$:
    \begin{align}
    \int_{\Dsf} 2(d_{x_1}u_\Omega)^2\;ds& \stackrel{\eqref{eq:adjoint_fOmega_q}}{=} \int_{\Dsf} \nabla (q_{\Omega,x_1}) \cdot \nabla(d_{x_1}u_\Omega) + \gamma q_{\Omega,x_1} d_{x_1}u_\Omega \;dx 
    \stackrel{\eqref{eq:ueps_state_derivative_fomega_equation}}{=}  c_{\Omega,x_1} q_{\Omega,x_1}(x_1). 
    \end{align}
    This finishes the proof of  (b).

    \emph{ad (a):} We first split the integral:
    \begin{equation}\label{eq:split_cost_J1_proof}
\int_\Dsf (u_\eps-u)^2\;dx = \int_{\Dsf\setminus B_\eps(x_1)}(u_\eps-u)^2\;dx + \int_{B_\eps(x_1)} (u_\eps-u)^2\;dx.
\end{equation}
We recall that for $d=2$ in $\Dsf$:
\begin{equation}\label{eq:expansion_ueps_u_proof_split}
    u_\eps = u + \eps^2 (\Uk{1}\circ T_\eps^{-1} + \vk{1} + \Rki{1}{1}(\eps)) + \eps^4 (\Uk{2}\circ T_\eps^{-1} + \vk{2} + \Rki{2}{1}(\eps)) + \Ce_\Omega(\eps)
\end{equation}
and for $d=3$:
\begin{equation}\label{eq:expansion_ueps_u_proof_split_3d}
    u_\eps = u + \eps^3 (\Uk{1}\circ T_\eps^{-1} + \vk{1}) + \eps^6 ( \eps^{-1}\Uk{2}\circ T_\eps^{-1} + \vk{2} ) + \Ce_\Omega(\eps).
\end{equation}
We will proceed only for the case $d=2$, but $d=3$ works the same way with obvious adaptations. Therefore we have for $d=2$ on the unit ball $B$
\begin{equation}
    (u_\eps-u)\circ T_\eps = \eps^2\underbrace{ (\Uk{1} + \vk{1}\circ T_\eps + \Rki{1}{1}(\eps))}_{:=\Zk{1}_\eps} + \eps^4\underbrace{(\Uk{2} + \vk{2}\circ T_\eps + \Rki{2}{1}(\eps) + \frac{\Ce_\Omega(\eps)\circ T_\eps}{\eps^4})}_{=:\Zk{2}_\eps}.
\end{equation}
Recall that $\Ce_\Omega(\eps)\circ T_\eps/\eps^4 = \Uk{3}_\eps$ (see Corollary~\ref{cor:expansion_remainder_D}). 
So we can write the integral over $B_\eps(x_1)$ with the change of variables $T_\eps(x)=x_1+\eps x$ as follows
\begin{align}
    \int_{B_\eps} (u_\eps-u)^2\;dx = \eps^2 \int_B \eps^4(\Zk{1}_\eps)^2 + \eps^8(\Zk{2}_\eps)^2 + \eps^6 2\Zk{1}_\eps \Zk{2}_\eps \;dx =: \frT{1}_\Omega(\eps). 
\end{align}
Since $\Uk{1},\Uk{2}$ and $\vk{1},\vk{2}$ are continuous on $B$ the only term that is unbounded in $\Zk{1}_\eps$ is $\Rki{1}{1}(\eps) = c_1 \ln(\eps)$ and in $\Zk{2}_\eps$ it is $\Rki{2}{1}(\eps) = c_2\ln(\eps)$ with two constants $c_1,c_2$. Therefore we can bound the single terms as follows:
\begin{align}
    \begin{split}
    \eps^6 \int_B (\Zk{1}_\eps)^2 \;dx & \le  \eps^6(C_1(\ln(\eps))^2 + C_2)  = O(\eps^5)  \\
    \eps^{10} \int_B (\Zk{2}_\eps)^2\;dx & \le \eps^8(C_1(\ln(\eps))^2 + C_2) = O(\eps^7) \\
    \eps^8 \left|\int_B 2\Zk{1}_\eps \Zk{2}_\eps \;dx\right| & \le  \eps^8 2\| \Zk{1}_\eps\|_{L^2(B)}\|\Zk{2}_\eps\|_{L^2(B)} \le \eps^8(C_1(\ln(\eps))^2 + C_2) = O(\eps^7). 
\end{split}
\end{align}
Here we also used that $\Uk{3}_\eps = \frac{\Ce_\Omega(\eps)\circ T_\eps}{\eps^4}$ is bounded by $C \sqrt{-\ln(\eps)}$ in $H^1(B)$  according to Corollary~\ref{cor:expansion_remainder_D} and Theorem~\ref{thm:remainder_estimate_Deps}. This shows that $\frT{1}_\Omega(\eps) = o(|B_\eps|^2)$ uniformly in $\Omega$ for $d=2$.

Now we turn our attention to the integral in \eqref{eq:split_cost_J1_proof} over the set $\Dsf\setminus B_\eps(x_1)$. For this we note that  $\mathfrak d^2_{x_1}u_\Omega = \frac{2}{|B|^2}(\Rki{2}{1}(x-x_1) + \vk{2})$ and thus \eqref{eq:expansion_ueps_u_proof_split}  becomes after rearranging:
\begin{align}\label{eq:ueps_uomega_Deps}
    \begin{split}
        u_\eps - u =&  \eps^2 |B|d_{x_1}u + \eps^4 \frac{|B|^2}{2}  \mathfrak d_{x_1}^2u \\
                    & + \eps^4 \frac{|B|^2}{2} (\Uk{2}\circ T_\eps^{-1} - \Rki{2}{1}(x-x_1) + \Rki{2}{1}(\eps)) + \Ce_\Omega(\eps) \quad \text{ on } \Dsf\setminus B_\eps(x_1).
\end{split}
\end{align}
We can further write:
\begin{align}
    \begin{split}
    \int_{\Dsf\setminus B_\eps(x_1)}(u_\eps - u)^2\;dx =&     \int_{\Dsf\setminus B_\eps(x_1)} \eps^4|B|^4 (d_xu)^2\;dx + \Ct_\Omega(\eps) \\
    =&  \int_{\Dsf} \eps^4|B|^4 (d_xu)^2\;dx \underbrace{- \int_{B_\eps(x_1)} \eps^4|B|^4 (d_xu)^2\;dx + \Ct_\Omega(\eps)}_{=:\frT{2}_\Omega(\eps)}
\end{split}
\end{align}
with a remainder term $\Ct_\Omega(\eps)$. We will show that $|\frT{2}_\Omega(\eps)|=o(\eps^4)$ uniformly in $\Omega$ for $d=2$.

For this we note that the remainder $\Ct_\Omega(\eps)$ consists of integrals over $\Dsf\setminus B_\eps(x_1)$ of products of all combinations (including itself and up to constants) of 
\begin{equation}
\eps^2d_{x_1}u, \quad  \eps^4 \mathfrak d_{x_1}^2u, \quad \eps^4 (\Uk{2}\circ T_\eps^{-1} - \Rki{2}{1}(x-x_1) + \Rki{2}{1}(\eps)), \quad \Ce_\Omega(\eps)
\end{equation}
except the term $\int_{\Dsf\setminus B_\eps(x_1)} \eps^4|B|^4 (d_xu)^2\;dx$. Each of these products has at least a power of $\eps^6$.  Therefore it is sufficient that the integral over $\Dsf\setminus B_\eps(x_1)$ of the square of each of those terms is bounded.  The products of different functions can be reduced to this case by H\"older's inequality.  Now by the Sobolev embedding theorem since $\mathfrak d_{x_1}^2u_\Omega, d_{x_1}^2u_\Omega\in W^{1,p}_\Gamma(\Dsf)$, we  have
\begin{equation}
    d_{x_1}u_\Omega,\; \mathfrak d_{x_1}^2u_\Omega \in \begin{cases}
        L^p(\Dsf) & \text{ for } d=2 \;\text{ and }\; 1\le p <\infty \\
        L^p(\Dsf) & \text{ for } d=3\;\text{ and }\; 1\le p < 3.
    \end{cases}
\end{equation}
Therefore we have for $r,s >0$, $\frac1r + \frac1s=1$ and $s>1$ small enough:
\begin{equation}
   \eps^4 \int_{B_\eps(x_1)} (d_{x_1}u)^2\;dx\le \eps^4 |B_\eps|^{\frac{1}{r}} \|(d_{x_1}u)^2\|_{L^s(\Dsf)}  \le C \eps^4 \eps^{\frac1r} = o(\eps^4)
\end{equation}
and also 
\begin{equation}
    \eps^8\int_{\Dsf\setminus B_\eps(x_1)}  (\mathfrak d_{x_1}^2u)^2 \;dx \le \eps^8 \|\mathfrak d_{x_1}^2u\|_{W^{1,p}(\Dsf)} = O(\eps^8).
\end{equation}
Further using Lemma~\ref{lem:bound_rk2_Uk2}
\begin{align}
    \begin{split}
    \eps^8 \int_{\Dsf\setminus B_\eps} (\Uk{2}\circ T_\eps^{-1}& - \Rki{2}{1}(x-x_1) + \Rki{2}{1}(\eps))^2\;dx \\
                                                               & = \eps^6 \int_{\Dsf_\eps^{-1}\setminus B} (\Uk{2} - \Rki{2}{1})^2\;dx \le \eps^6(C_1 + C_2)|\ln(\eps)|) = o(\eps^5).
\end{split}
\end{align}
As for the remainder we have by Corollary~\ref{cor:expansion_remainder_D}:
\begin{equation}
    \|\Ce_\Omega(\eps)\|_{L^2(\Dsf\setminus B_\eps(x_1))}^2 = \begin{cases}
        O(\eps^{10}(-\ln(\eps))) & \text{ for } d=2 \\
        O(\eps^{14}) & \text{ for } d=3
    \end{cases} 
\end{equation}
uniformly in $\Omega$.  Combining the previous results we see that $\Ct_\Omega(\eps) := \frT{1}_\Omega(\eps) + \frT{2}_\Omega(\eps)$ satisfies $|\Ct_\Omega(\eps)| = o(\eps^4)$ uniformly in $\Omega$ for $d=2$.

\end{proof}

\subsubsection{Taylor-like expansion of $J_1(\Omega)$ with multiple holes}
We now apply Theorem~\ref{main:thm_expansion_general} to obtain the following result for $d=2$.

\begin{theorem}\label{main:expansion_J1_multiple_2d}
Assume $d=2$. Let $\Omega\subset \Dsf$ be an open set. For $x_1,\ldots,x_n\in \Dsf\setminus\partial \Omega$, $n\ge 1$ with $x_i\ne x_j$ for $i\ne j$, we have
\begin{align}
    J(\Omega_{\eps_1,\ldots,\eps_n})  = J(\Omega) + D\VJ (\Omega)(x_1,\ldots, x_n)\cdot \begin{pmatrix}
|B_{\eps_1}|\\ \vdots \\ |B_{\eps_n}|
\end{pmatrix} 
        + & \frac12D^2 \VJ(\Omega)(x_1,\ldots, x_n) \begin{pmatrix}
|B_{\eps_1}| \\\vdots \\ |B_{\eps_n}|
\end{pmatrix}\cdot \begin{pmatrix}
|B_{\eps_1}| \\ \vdots \\|B_{\eps_n}|
\end{pmatrix}  
                              \\
          &+ o(|B_{\eps_1}|^2+\cdots + |B_{\eps_n}|^2),
\end{align}
where for $i,j\in \{1,\ldots, n\}$:
\begin{align}
    (D^2 \VJ(\Omega)(x_1,\ldots, x_n))_{ii} & = \int_\Dsf 2 (d_{x_i}u_\Omega)^2\;dx + \int_\Dsf 2(u-u_r)d_{x_1}^2 u_\Omega \;dx \\
    (D^2 \VJ(\Omega)(x_1,\ldots, x_n))_{ij} & = \int_\Dsf  2(d_{x_i}u_\Omega) (d_{x_j} u_\Omega)\;dx, \quad i\ne j,
\end{align}
and for $i\in \{1,\ldots, n\}$:
\begin{equation}
    (D\VJ (\Omega)(x_1,\ldots, x_n))_i :=  \int_\Dsf 2(u-u_r)(d_{x_i}u_\Omega)\;dx.
\end{equation}
\end{theorem}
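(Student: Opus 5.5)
The plan is to apply Theorem~\ref{main:thm_expansion_general} with $\ell\equiv 0$, i.e.\ $D^{\frac32}J_1(\Omega)(x_i,x_i)=0$. This reduces the proof to verifying (A1)--(A4) for $J_1$ in $d=2$, identifying the entries of the resulting matrix, and controlling all remainders uniformly in $\Omega$.

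\textbf{(A1).} This is exactly Theorem~\ref{thm:expansion_J1_fOmega}(a). It gives
\[
DJ_1(\Omega)(x_1)=\int_\Dsf 2(u_\Omega-u_r)d_{x_1}u_\Omega\,dx=-c_{\Omega,x_1}p_\Omega(x_1)
\]
and the diagonal entry
\[
D^2J_1(\Omega)(x_1,x_1)=\int_\Dsf 2(d_{x_1}u_\Omega)^2+2(u_\Omega-u_r)d^2_{x_1}u_\Omega\,dx ,
\]
with remainder $o(|B_\eps|^2)$ uniformly in $\Omega$. No $\ell$-term appears in $d=2$, so (A3) is trivial.

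\textbf{(A2): the off-diagonal entries.} Fix $x_1\ne x_2$. Since $\sigma_{\Omega_{\eps_2}}(x_1)=\sigma_\Omega(x_1)$ for small $\eps_2$, we have $c_{\Omega_{\eps_2},x_1}=c_{\Omega,x_1}$. Hence
\[
DJ_1(\Omega_{\eps_2})(x_1)-DJ_1(\Omega)(x_1)=-c_{\Omega,x_1}\bigl(p_{\Omega_{\eps_2}}(x_1)-p_\Omega(x_1)\bigr).
\]
The difference $w_{\eps_2}:=p_{\Omega_{\eps_2}}-p_\Omega$ solves the adjoint equation \eqref{eq:adjoint_fOmega} with right-hand side $-2(u_{\eps_2}-u_\Omega)$. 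I would insert the compound layer expansion (Corollary~\ref{cor:expansion_remainder_D}): $u_{\eps_2}-u_\Omega=|B_{\eps_2}|d_{x_2}u_\Omega+r_{\eps_2}$, where $r_{\eps_2}$ is concentrated near $x_2$ and is $o(|B_{\eps_2}|)$ in $L^s$ for some $s>1$. Then $w_{\eps_2}=|B_{\eps_2}|\hat q+\tilde r_{\eps_2}$, where $\hat q$ solves the adjoint problem with source $-2d_{x_2}u_\Omega$; this is $q_{\Omega,x_2}$.

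Interior elliptic regularity near $x_1$, which is away from $x_2$, turns $L^s$ control of the source into pointwise control of $\tilde r_{\eps_2}(x_1)$. Hence $\tilde r_{\eps_2}(x_1)=o(|B_{\eps_2}|)$. This yields
\[
D^2J_1(\Omega)(x_1,x_2)=-c_{\Omega,x_1}q_{\Omega,x_2}(x_1).
\]
Testing \eqref{eq:adjoint_fOmega_q} against $d_{x_1}u_\Omega$ and \eqref{eq:ueps_state_derivative_fomega_equation} against $q_{\Omega,x_2}$ rewrites this as $\int_\Dsf 2\, d_{x_1}u_\Omega\, d_{x_2}u_\Omega\,dx$; the same argument as in Lemma~\ref{lem:ueps_uOmega_J1_expansion}(b) fixes the sign. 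The formula is symmetric in $x_1,x_2$. It also agrees with the formal calculus, since $d^2_{x_1x_2}u_\Omega=0$.

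\textbf{(A4) and the main obstacle.} (A4) requires $D^2J_1(\Omega_{\eps_3})(x_1,x_2)\to D^2J_1(\Omega)(x_1,x_2)$ uniformly in $\Omega$, including the diagonal case $x_1=x_2$ that the proof of Theorem~\ref{main:thm_expansion_general} actually uses. Away from $x_3$, the data $c_{\Omega,x_i}$, $K_{\Omega,x_i}$, $d_{x_i}u$ and $\mathfrak d^2_{x_i}u$ are unchanged by the perturbation at $x_3$. Only $u_\Omega$ and $p_\Omega$ change, and each by $O(|B_{\eps_3}|)$ in $L^s$ and locally uniformly near $x_1,x_2$. Continuity of the formulas above therefore gives (A4).

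The delicate point is uniformity in $\Omega$ of all remainders, in particular of the remainder in (A1) applied to $\Omega_{\eps_2}$. I expect to get this from the fact, used repeatedly in Theorem~\ref{thm:remainder_estimate_Deps} and Lemma~\ref{lem:ueps_uOmega_J1_expansion}, that the constants depend on $\Omega$ only through $|c_{\Omega,x_1}|\le|f_1-f_2|$ and through norms of $u_\Omega,p_\Omega$ near $x_1$. These norms are bounded uniformly over $\Omega_{\eps_2,\ldots,\eps_n}$ by elliptic regularity, since the perturbation is piecewise constant with bounded data. This step, specifically the uniform $C^4$ bound on $p$ near $x_1$ used in the Taylor remainder of Lemma~\ref{lem:aux_dJ1}, is where I would spend the most care. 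Once it is in place, Theorem~\ref{main:thm_expansion_general} yields the claimed expansion.
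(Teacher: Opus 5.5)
Your proposal is correct and follows the paper's skeleton: Theorem~\ref{main:thm_expansion_general} with $\ell\equiv0$, (A1) quoted from Theorem~\ref{thm:expansion_J1_fOmega}(a), (A3) void, and (A4) via $d_{x_i}u_{\Omega_{\eps_3}}=d_{x_i}u_\Omega$ plus interior regularity of $u_{\Omega_{\eps_3}},p_{\Omega_{\eps_3}}$ at $x_1$ away from $x_3$. The real difference is in (A2).

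You differentiate the adjoint form $-c_{\Omega,x_1}p_\Omega(x_1)$. In effect you show that $(p_{\Omega_{\eps_2}}-p_\Omega)/|B_{\eps_2}|\to q_{\Omega,x_2}$ at $x_1$, which gives $D^2J_1(\Omega)(x_1,x_2)=-c_{\Omega,x_1}q_{\Omega,x_2}(x_1)$.

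The paper instead works with $\int_\Dsf 2(u_{\Omega_{\eps_2}}-u_\Omega)\,d_{x_1}u_\Omega\,dx$, which is your difference after one duality step. It tests this with the second adjoint $q_{\Omega,x_1}$ of the \emph{fixed} point and obtains the exact identity
\[
DJ_1(\Omega_{\eps_2})(x_1)-DJ_1(\Omega)(x_1)=-c_{\Omega,x_2}\int_{B_{\eps_2}(x_2)}q_{\Omega,x_1}\,dx .
\]
Since $q_{\Omega,x_1}$ is smooth near $x_2$, a Taylor expansion gives $-c_{\Omega,x_2}q_{\Omega,x_1}(x_2)$ with an $O(\eps_2^4)$ remainder. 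This avoids any asymptotics of $u_{\Omega_{\eps_2}}$. It also makes uniformity trivial, because $q_{\Omega',x_1}=q_{\Omega,x_1}$ for every perturbation $\Omega'$ away from $x_1$.

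Your route only yields $o(|B_{\eps_2}|)$, but it reads naturally as a chain rule through the adjoint. It also does not need the compound layer expansion. The strong $W^{1,p}_\Gamma(\Dsf)$-convergence ($p<2$) that defines $d_{x_2}u_\Omega$ already gives an $o(|B_{\eps_2}|)$ bound in some $L^s$, $s>1$, and that suffices for the pointwise estimate at $x_1$. The two formulas coincide by the testing identity you give.

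On the point you flag as delicate, uniformity is largely automatic here. For every $\Omega'=\Omega_{\eps_2,\ldots,\eps_n}$ with disjoint balls, $u_{\Omega'_\eps(x_1)}-u_{\Omega'}$ solves the same problem with source $c_{\Omega,x_1}\chi_{B_\eps(x_1)}$, so it does not depend on the other holes. In (A1) they therefore enter only through $p_{\Omega'}$, and $p_{\Omega'}-p_\Omega$ is smooth near $x_1$ with bounds tending to zero.

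However, the $C^4$ bound on $p$ you plan to use is not available under the standing assumption $u_r\in H^1(\Dsf)\cap C(\Dsf)$, since $p_\Omega$ is then only $W^{2,s}_{\mathrm{loc}}$. It is also not needed. In $d=2$ you need $\int_{B_\eps(x_1)}p\,dx$ only up to $o(\eps^4)$, and continuity of $\Delta p_\Omega=\gamma p_\Omega+2(u_\Omega-u_r)$ at $x_1$ already gives that.
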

\begin{proof}
 We apply Theorem~\ref{main:thm_expansion_general} with $\ell(\eps):=0$,  $d=2$ and $n\ge 1$ arbitrary. 

  Assumption (A1) is satisfied in view of Theorem~\ref{thm:expansion_J1_fOmega}.   To check Assumption (A2) let $x_1,x_2\in\Dsf\setminus\partial \Omega$, $x_1\ne x_2$, then 
  \begin{equation}
      DJ(\Omega_{\eps_2}(x_2))(x_1) = \int_\Dsf 2(u_{\Omega_{\eps_2}(x_2)} - u_r) d_{x_1}u_\Omega\;dx. 
  \end{equation}
  Here we used that $d_{x_1} u_{\Omega_{\eps_2}(x_2)}= d_{x_1} u_{\Omega}$ for $\eps_2>0$ small. Now notice that using \eqref{eq:state_fOmega} we see that $u_{\Omega_{\eps_2}(x_2)} - u_\Omega$ satisfies:
  \begin{equation}\label{eq:diff_eps2}
    \int_\Dsf \nabla ( u_{\Omega_{\eps_2}(x_2)} - u_\Omega )\cdot \nabla \varphi + \gamma ( u_{\Omega_{\eps_2}(x_2)} - u_\Omega ) \varphi \;dx = c_{\Omega,x_2} \int_{B_{\eps_2}(x_2)}\varphi\;dx  
\end{equation}
for $\varphi\in H^1_\Gamma(\Dsf)$.
  Therefore using the  adjoint $q_{\Omega,x_1} = q$ we obtain
  \begin{align}\label{eq:diff_DJ(Omega_eps2)}
      \begin{split}
      DJ(\Omega_{\eps_2}(x_2))(x_1)-DJ(\Omega)(x_1)  & = \int_\Dsf 2(u_{\Omega_{\eps_2}(x_2)} - u_\Omega ) d_{x_1}u_\Omega\;dx \\
                                                     &\stackrel{\eqref{eq:adjoint_fOmega_q}}{=} -\int_\Dsf \nabla q_{\Omega,x_1} \cdot \nabla (u_{\Omega_{\eps_2}(x_2)} - u_\Omega ) + \gamma q_{\Omega,x_1} (u_{\Omega_{\eps_2}(x_2)} - u_\Omega )\;dx \\
                                                     & \stackrel{\eqref{eq:diff_eps2}}{=} -c_{\Omega,x_2} \int_{B_\eps(x_2)} q_{\Omega,x_1}\;dx \\
                                                     & = -c_{\Omega,x_2}\eps^2 \int_B q_{\Omega,x_1}(x_2+\eps x) \;dx.
  \end{split}
  \end{align}
  Now since $-\Delta q_{\Omega,x_1} = -\gamma q_{\Omega,x_1} + 2 d_{x_1}u_\Omega$ and since $d_{x_1}u_\Omega$ is smooth near $x_2$ it follows that $q_{\Omega,x_1}$ is smooth near $x_2$ and depends continuously on $\Omega$. Therefore we can use a Taylor expansion to obtain:
\begin{equation}
    DJ(\Omega_{\eps_2}(x_2))(x_1)-DJ(\Omega)(x_1) = -c_{\Omega,x_2} |B_{\eps_2}|q_{\Omega,x_1}(x_2) + \underbrace{(\eps_2)^4\int_0^1\int_B  \frac12 \nabla^2 q_{\Omega,x_1}(x_2 + sx )x\cdot x\;dx\,ds}_{=:\frR{2}_{\Omega,x_1,x_2}(\eps)}.
\end{equation}
Finally noting that $D^2J(\Omega)(x_1,x_2) = \int_\Dsf (d_{x_1}u_\Omega)(d_{x_2}u_\Omega)\;dx =  -c_{\Omega,x_1} q_{\Omega,x_1}(x_2)$ we see that Assumption~(A2) is satisfied with $|\frR{2}_{\Omega,x_1,x_2}(\eps)| = O(\eps^4)$ uniformly. 

Assumption~(A3) is satisfied since $\ell(\eps)=0$.  Finally to check Assumption~(A4) note that for $x_1\ne x_2$ and $x_1,x_2\ne x_3$:
\begin{equation}
    D^2J(\Omega_{\eps_3})(x_1,x_2) = \int_{\Dsf} 2 (d_{x_1}u_{\Omega_{\eps_3}})(d_{x_2}u_{\Omega_{\eps_3}}) \;dx. 
\end{equation}
Now $d_{x_i}u_{\Omega_{\eps_3}} = d_{x_i}u_{\Omega}$, $i=1,2$, for $\eps_3>0$ small, which shows (A4) for $x_1\ne x_2$. In case $x_1=x_2$ and $x_1,x_2\ne x_3$ we have according to Theorem~\ref{thm:expansion_J1_fOmega}:
\begin{equation}
    D^2J(\Omega_{\eps_3})(x_1,x_1)  = \int_{\Dsf} 2(d_{x_1}u_{\Omega_{\eps_1}})^2 \;dx -c_{\Omega,x_1}(q_{\Omega_{\eps_2},x_1}(x_1) - \frac{1}{4\pi} (\gamma p_{\Omega_{\eps_3}}(x_1) + 2(u_{\Omega_{\eps_3}} - u_r)(x_1)). 
\end{equation}
The first term on the right hand side is treated as in the case $x_1\ne x_2$. As for the second term we $q_{\Omega_{\eps_3},x_1}=q_{\Omega,x_1}$ since $d_{x_1} u_{\Omega_{\eps_3}} = d_{x_1} u_{\Omega}$. So for the terms involving $p_{\Omega_{\eps_3}}(x_1)$ and $u_{\Omega_{\eps_3}}(x_1)$ we note that by elliptic regularity for $\delta >0$ small, $d=2$ and $p>2$:
\begin{align}\label{eq:elliptic_regularity}
    \begin{split}
    |p_{\Omega_{\eps_3}}(x_1) -  p_\Omega(x_1)| & \le \|p_{\Omega_{\eps_3}} -  p_\Omega\|_{W^{1,p}(B_{\delta}(x_3))}  
                                                 \le \|u_{\Omega_{\eps_3}} - u_\Omega\|_{L^p(B_{2\delta}(x_3))}\\
    |u_{\Omega_{\eps_3}}(x_1) -  u_\Omega(x_1)|  & \le \|u_{\Omega_{\eps_3}} -  u_\Omega\|_{W^{1,p}(B_{\delta}(x_3))} 
                                                  \le \|f_{\Omega_{\eps_3}}-f_\Omega\|_{L^p(B_{2\delta}(x_3))}. 
\end{split}
\end{align}
So we see that
\begin{align}
|\frR{4}_{\Omega,x_1,x_2}(\eps_3)|=&  |D^2J(\Omega_{\eps_3})(x_1,x_1)  -D^2J(\Omega)(x_1,x_1)| 
    \le C\|f_{\Omega_{\eps_3}} - f_\Omega\|_{L^p(B_{2\delta}(x_3)} 
     \le  C|B_{\eps_3}|^{\frac{1}{p}},
\end{align}
which is Assumption~(A4) for $x_1=x_2$.

\end{proof}

\begin{theorem}\label{main:expansion_J1_multiple_3d}
Let $\Omega\subset \Dsf$ be an open set. For $x_1,\ldots,x_n\in \Dsf\setminus\partial \Omega$, $n\ge 1$ with $x_i\ne x_j$ for $i\ne j$, we have
\begin{align}
    \begin{split}
    J(\Omega_{\eps_1,\ldots,\eps_n})  =  J(\Omega) + D\VJ (\Omega)(x_1,\ldots, x_n)\cdot \begin{pmatrix}
|B_{\eps_1}|\\ \vdots \\ |B_{\eps_n}|
\end{pmatrix} 
        &+ 
        \frac12D^{\frac32} \VJ(\Omega)(x_1,\ldots, x_n) \begin{pmatrix}
            \eps^{-\frac12}|B_{\eps_1}| \\\vdots \\ \eps^{-\frac12}|B_{\eps_n}|
\end{pmatrix}\cdot \begin{pmatrix}
\eps^{-\frac12}|B_{\eps_1}| \\ \vdots \\  \eps^{-\frac12}|B_{\eps_n}|
\end{pmatrix}  \\
          &+ \frac12D^2 \VJ(\Omega)(x_1,\ldots, x_n) \begin{pmatrix}
|B_{\eps_1}| \\\vdots \\ |B_{\eps_n}|
\end{pmatrix}\cdot \begin{pmatrix}
|B_{\eps_1}| \\ \vdots \\|B_{\eps_n}|
\end{pmatrix}     \\
          &+ o(|B_{\eps_1}|^2+\cdots + |B_{\eps_n}|^2),
    \end{split}
\end{align}
where for $i,j=1,\ldots, n$:
\begin{align}
    (D^{\frac32} \VJ(\Omega)(x_1,\ldots, x_n))_{ii} & = \int_\Dsf 2(u_\Omega-u_r)d_{x_1}^2 u_\Omega \;dx \\
    (D^{\frac32} \VJ(\Omega)(x_1,\ldots, x_n))_{ij} & = 0, \quad i\ne j \\
    (D^2 \VJ(\Omega)(x_1,\ldots, x_n))_{ii} & = \int_\Dsf 2(d_{x_i}u_\Omega)^2\;dx \\
    (D^2 \VJ(\Omega)(x_1,\ldots, x_n))_{ij} & = \int_\Dsf  2(d_{x_i}u_\Omega) (d_{x_j} u_\Omega)\;dx 
\end{align}
and for $i\in \{1,\ldots,n\}$:
\begin{equation}
    (D\VJ (\Omega)(x_1,\ldots, x_n))_i =  \int_\Dsf 2(u_\Omega-u_r)(d_{x_i}u_\Omega)\;dx.
\end{equation}
\end{theorem}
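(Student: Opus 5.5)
The plan is to apply Theorem~\ref{main:thm_expansion_general} with $d=3$ and $\ell(\eps):=\frac12\eps^{-1}|B_\eps|^2$. This $\ell$ satisfies $|B_\eps|^2<\ell(\eps)<|B_\eps|$ and $\ell(\eps)=o(|B_\eps|)$ for small $\eps$. With this choice we identify
\[
D^{\frac32}J(\Omega)(x_1,x_1):=\int_\Dsf 2(u_\Omega-u_r)d^2_{x_1}u_\Omega\;dx, \qquad D^2J(\Omega)(x_1,x_1):=\int_\Dsf 2(d_{x_1}u_\Omega)^2\;dx .
\]
The diagonal term of Theorem~\ref{main:thm_expansion_general} then reads $\ell(\eps_i)D^{\frac32}J(\Omega)(x_i,x_i)$, which is exactly $\frac12(\eps_i^{-1/2}|B_{\eps_i}|)^2 D^{\frac32}J(\Omega)(x_i,x_i)$. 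This is the claimed $D^{\frac32}$-form, reading $\eps$ in the statement as $\eps_i$ componentwise.

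\textbf{Assumption (A1).} This is precisely Theorem~\ref{thm:expansion_J1_fOmega}(b), whose remainder $\frR{1}_{\Omega,x_1}$ is $o(|B_\eps|^2)$ uniformly in $\Omega$.

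\textbf{Assumption (A2).} I would repeat the argument given for Theorem~\ref{main:expansion_J1_multiple_2d} verbatim. Since $d_{x_1}u_{\Omega_{\eps_2}(x_2)}=d_{x_1}u_\Omega$ for small $\eps_2$, testing the equation of $q_{\Omega,x_1}$ with $u_{\Omega_{\eps_2}(x_2)}-u_\Omega$ gives
\[
DJ(\Omega_{\eps_2})(x_1)-DJ(\Omega)(x_1)=-c_{\Omega,x_2}\int_{B_{\eps_2}(x_2)}q_{\Omega,x_1}\,dx .
\]
The function $q_{\Omega,x_1}$ is smooth near $x_2\neq x_1$. A Taylor expansion (odd moments vanish by Remark~\ref{rem:int_nabla_f_xk}) then yields the leading term $|B_{\eps_2}|(-c_{\Omega,x_2}q_{\Omega,x_1}(x_2))$ with remainder $O(\eps_2^{5})=o(|B_{\eps_2}|)$. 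Testing the equation of $q_{\Omega,x_1}$ with $d_{x_2}u_\Omega$ identifies $-c_{\Omega,x_2}q_{\Omega,x_1}(x_2)=\int_\Dsf 2 d_{x_1}u_\Omega\, d_{x_2}u_\Omega\,dx$. This gives the off-diagonal entries.

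\textbf{Assumption (A3).} This is the only genuinely new check compared with $d=2$. By Lemma~\ref{lem:aux_dJ1}(b),
\[
D^{\frac32}J(\Omega)(x_1,x_1)=c_{\Omega,x_1}\Delta p_\Omega(x_1)\,\kappa
\]
with an explicit constant $\kappa$, and $\Delta p_\Omega(x_1)=\gamma p_\Omega(x_1)+2(u_\Omega-u_r)(x_1)$. For $x_2\neq x_1$, $c_{\Omega_{\eps_2},x_1}=c_{\Omega,x_1}$. It therefore suffices to bound $|p_{\Omega_{\eps_2}}(x_1)-p_\Omega(x_1)|$ and $|u_{\Omega_{\eps_2}}(x_1)-u_\Omega(x_1)|$ by $O(|B_{\eps_2}|)$.

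This is where I expect the main difficulty, since the local elliptic estimate \eqref{eq:elliptic_regularity} used in the 2D proof only yields $|B_{\eps_2}|^{1/p}$. Instead I would use Green's-function representations. Since $x_1$ is at positive distance from $B_{\eps_2}(x_2)$, we have
\[
u_{\Omega_{\eps_2}}(x_1)-u_\Omega(x_1)=c_{\Omega,x_2}\int_{B_{\eps_2}(x_2)}G(x_1,y)\,dy=O(|B_{\eps_2}|),
\]
where $G$ is the Green's function of $-\Delta+\gamma$ with the mixed boundary conditions and is smooth near $x_2$. Similarly, $p_{\Omega_{\eps_2}}-p_\Omega$ solves the adjoint problem with right-hand side $-2(u_{\Omega_{\eps_2}}-u_\Omega)$. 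Its value at $x_1$ is the pairing of $u_{\Omega_{\eps_2}}-u_\Omega$ against $G(x_1,\cdot)\in L^{s}$ for $s<3$. Combining this with Corollary~\ref{cor:expansion_remainder_D}(b), which shows $\|u_{\Omega_{\eps_2}}-u_\Omega\|_{L^{r}}=O(\eps_2^3)$ for suitable $r$ (the outer part behaves like $\eps_2^3|B|d_{x_2}u_\Omega$, which lies in $L^r$ for $r<3$), gives the $O(|B_{\eps_2}|)$ bound. Uniformity in $\Omega$ follows because everything depends on $\Omega$ only through $c_{\Omega,\cdot}$, with $|\sigma_\Omega|=1$.

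\textbf{Assumption (A4).} For $x_1\neq x_2$, the entries $\int_\Dsf 2d_{x_1}u\,d_{x_2}u\,dx$ are unchanged for small $\eps_3$, since $d_{x_i}u_{\Omega_{\eps_3}}=d_{x_i}u_\Omega$. For the diagonal, $\int_\Dsf 2(d_{x_1}u)^2\,dx$ is likewise unchanged, so $\frR{4}=0$ there. This is in fact easier than the 2D case, because the $\Delta p$ term has moved into $D^{\frac32}J$.

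Theorem~\ref{main:thm_expansion_general} then yields the stated expansion.
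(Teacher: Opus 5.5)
Your proposal is correct and follows the paper's route. The paper's proof only says to apply Theorem~\ref{main:thm_expansion_general} with $\ell(\eps)=\frac12\eps^{-1}|B_\eps|^2$, and omits the verification of (A1)--(A4) as ``similar'' to the two-dimensional case. Your Green's-function bound for (A3) gives $O(|B_{\eps_2}|)$ rather than the $|B_{\eps_2}|^{1/p}$ that the 2D-style local estimate would yield, and so supplies the one check that the 2D case never needed (there $\ell=0$); your observation that (A4) becomes trivial in $d=3$ is also correct.
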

\begin{proof}
    We apply Theorem~\ref{main:thm_expansion_general} with $\ell(\eps):= \frac12\eps^{-1}|B_\eps|^2$, $d=3$ and $n\ge 1$ arbitrary. Since the proof is similar to the one of Theorem~\ref{main:expansion_J1_multiple_3d} it is omitted.
\end{proof}

\subsection{The cost functional $J_2(\Omega)=\int_{\Gamma_N} (u_\Omega-u_r)^2\;ds$ with $-\Delta u_\Omega + \gamma u_\Omega = f_\Omega$}
In this section we consider for $\Omega\subset \Dsf$ the cost functional
\begin{equation}
    J_2(\Omega) =\int_{\Gamma_N} (u_\Omega-u_r)^2\;ds
\end{equation}
subject to $u_\Omega\in H^1_\Gamma(\Omega)$ solves \eqref{eq:state_fOmega}. The target function $u_r$ is assumed to be in $H^1(\Gamma_N)$. 

\subsubsection{Taylor-like expansion of $J_2(\Omega)$}
Let $\Omega\subset \Dsf$ be an open set. We now compute the expansion for $x_1\in \Dsf\setminus\partial \Omega$ and  $\eps>0$ of
\begin{equation}
    J_2(\Omega_\eps(x_1)) = \int_{\Gamma_N} (u_\eps-u_r)^2\;ds,
\end{equation}
where $u_\eps = u_{\Omega_\eps(x_1)}$ solves $-\Delta u_\eps + \gamma u_\Omega = f_{\Omega_\eps(x_1)}$ in $\Dsf$ and $u_\eps = g$ on $\Gamma$ and $\partial_n u_\eps = h$ on $\Gamma_N$.

We introduce again two adjoint variables associated with $J_2$ and $d_{x_1}u_\Omega$, respectively. 
\begin{definition}
The adjoint $p_\Omega = p\in H^1_\Gamma(\Dsf)$ is defined as the weak  solution to 
\begin{cases2}{eq:adjoint_fOmega_boundary}
    -\Delta p_\Omega +\gamma p_\Omega & = 0  && \quad \text{ in } \Dsf\\
            p_\Omega & = 0 && \quad \text{ on } \Gamma \\
        \partial_n p_\Omega & = -2(u_\Omega-u_r) && \quad \text{ on } \Gamma_N. 
\end{cases2}
We also introduce the second adjoint $q_{\Omega,x_1} = q\in H^1_\Gamma(\Dsf)$ as solution to 
\begin{cases2}{eq:adjoint_fOmega_q_boundary}
    -\Delta q_\Omega + \gamma q_\Omega & = 0 && \quad \text{ in } \Dsf\\
            q_\Omega & = 0 && \quad \text{ on } \Gamma \\
        \partial_n q_\Omega & = -2d_{x_1}u_\Omega && \quad \text{ on } \Gamma_N. 
\end{cases2}
\end{definition}
Note that $q_{\Omega,x_1}$ is smooth in the interior of $\Dsf$ by a bootstrapping argument.  We now state the main result of this section.
\begin{theorem}\label{thm:expansion_J2_fOmega}
Let $\Omega\subset \Dsf$ be an open set and $x_1\in \Dsf\setminus \partial \Omega$. 
\begin{itemize}
    \item[(a)] For $d=2$, we have for $\eps>0$ small:
\begin{align}\label{eq:expansion_J2_fOmega}
    \begin{split}
        J_1(\Omega_\eps(x_1)) -J_1(\Omega) =& |B_\eps| \int_{\Gamma_N} 2(u_\Omega-u_r) d_{x_1} u_\Omega \;ds \\
                                            &+ \frac{|B_\eps|^2}{2} \int_{\Gamma_N} 2(d_{x_1}u_\Omega)^2 + 2(u-u_r)\mathfrak d_{x_1}^2u_\Omega\;ds   + \Ct_\Omega(\eps)
\end{split}
\end{align}
or equivalently in terms of the adjoints $p_\Omega,q_{\Omega,x_1}$:
\begin{align}\label{eq:expansion_J2_fOmega2}
    \begin{split}
        J_1(\Omega_\eps(x_1)) -J_1(\Omega) =& -|B_\eps| c_{\Omega,x_1} p_\Omega (x_1)   
        + \frac{|B_\eps|^2}{2} c_{\Omega,x_1}(-q_{\Omega,x_1}(x_1) - \frac{1}{4\pi}\Delta p_\Omega(x_1) )  \\
                                            &  + \Ct_\Omega(\eps),
\end{split}
\end{align}
where the remainder satisfies $|\Ct_\Omega(\eps)| = o(|B_\eps|^2)$ uniformly in $\Omega$. 
\item[(b)]  For $d=3$, we have for $\eps>0$ small:
\begin{align}\label{eq:expansion_J2_fOmega_3d}
    \begin{split}
        J_1(\Omega_\eps(x_1)) -J_1(\Omega) =& |B_\eps| \int_{\Gamma_N} 2(u_\Omega-u_r) d_{x_1} u_\Omega \;ds \\
                                            & + \frac{\eps^{-1}|B_\eps|^2}{2} \int_{\Gamma_N} 2(u_\Omega-u_r)\mathfrak d_{x_1}^2u_\Omega\;ds + \frac{|B_\eps|^2}{2} \int_{\Gamma_N} 2(d_{x_1}u_\Omega)^2 \;ds   + \Ct_\Omega(\eps)
\end{split}
\end{align}
or equivalently in terms of the adjoints $p_\Omega,q_{\Omega,x_1}$:
\begin{align}\label{eq:expansion_J2_fOmega2_3d}
    \begin{split}
        J_1(\Omega_\eps(x_1)) -J_1(\Omega) =& -|B_\eps| c_{\Omega,x_1} p_\Omega (x_1)   
        + \frac{|B_\eps|^2}{2} c_{\Omega,x_1}(-q_{\Omega,x_1}(x_1) - \eps^{-1}\frac{3}{20\pi}\Delta p_\Omega(x_1) )  \\
                                            &  + \Ct_\Omega(\eps),
\end{split}
\end{align}
where the remainder satisfies $|\Ct_\Omega(\eps)| = o(|B_\eps|^2)$ uniformly in $\Omega$.  
\end{itemize}
\end{theorem}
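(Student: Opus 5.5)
The plan mirrors the proof of Theorem~\ref{thm:expansion_J1_fOmega}, with volume integrals replaced by boundary integrals over $\Gamma_N$. Write $u_\eps:=u_{\Omega_\eps(x_1)}$ and $u:=u_\Omega$. The starting point is the splitting
\begin{equation*}
J_2(\Omega_\eps(x_1))-J_2(\Omega)=\int_{\Gamma_N}(u_\eps-u)^2\,ds+\int_{\Gamma_N}2(u-u_r)(u_\eps-u)\,ds ,
\end{equation*}
and the two terms are expanded separately.

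\textbf{Linear term.} Test the equation for $u_\eps-u$, namely $\int_\Dsf\nabla(u_\eps-u)\cdot\nabla\varphi+\gamma(u_\eps-u)\varphi\,dx=c_{\Omega,x_1}\int_{B_\eps(x_1)}\varphi\,dx$, with $\varphi=p_\Omega$. Then test \eqref{eq:adjoint_fOmega_boundary} with $u_\eps-u$, and note that the Neumann datum produces exactly the boundary integral. This gives
\begin{equation*}
\int_{\Gamma_N}2(u-u_r)(u_\eps-u)\,ds=-c_{\Omega,x_1}\int_{B_\eps(x_1)}p_\Omega\,dx .
\end{equation*}
Since $p_\Omega$ solves $-\Delta p+\gamma p=0$ in $\Dsf$, it is smooth near $x_1$. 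Taylor expand it as in Lemma~\ref{lem:aux_dJ1}: odd moments vanish by Remark~\ref{rem:int_nabla_f_xk}, and the partial integration \eqref{eq:partial_int_nablap} with \eqref{eq:int_x2_1} turns the second moment into $\Delta p_\Omega(x_1)$. This yields $-|B_\eps|c_{\Omega,x_1}p_\Omega(x_1)$ plus the $\frac{1}{4\pi}$ term (for $d=2$), respectively the $\eps^{-1}\frac{3}{20\pi}$ term (for $d=3$), with remainder $O(\eps^{d+4})$. The constant is uniform in $\Omega$ because $p_\Omega$ depends on $\Omega$ only through $u_\Omega$.

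To identify this with the $\mathfrak d^2_{x_1}u_\Omega$ form, use $\Delta p_\Omega(x_1)=\gamma p_\Omega(x_1)$ and Remark~\ref{rem:const_int_x2_1_KOmega}. Then test \eqref{eq:frak_d} (resp.\ \eqref{eq:frak_d_3d}) against $p_\Omega$, and \eqref{eq:adjoint_fOmega_boundary} against $z=\mathfrak d^2_{x_1}u_\Omega$, obtaining $\int_{\Gamma_N}2(u-u_r)\mathfrak d^2_{x_1}u_\Omega\,ds$. Here, unlike for $J_1$, no delta term remains, because the adjoint has no interior source. This is consistent with the formal computation in the preceding section.

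\textbf{Quadratic term.} Use Corollary~\ref{cor:expansion_remainder_D}. Since $x_1$ lies at positive distance from $\Gamma_N$, the near-field representation \eqref{eq:ueps_uomega_Deps} is valid on $\Gamma_N$, so
\begin{equation*}
u_\eps-u=|B_\eps|d_{x_1}u+O(\eps^4)\ \text{terms}+\Ce_\Omega(\eps).
\end{equation*}
Squaring and integrating over $\Gamma_N$ gives $|B_\eps|^2\int_{\Gamma_N}(d_{x_1}u)^2\,ds$. The cross terms are controlled as follows:
\begin{itemize}
\item $d_{x_1}u$ and $\mathfrak d^2_{x_1}u$ are smooth near $\Gamma_N$.
\item The term $\Uk{2}\circ T_\eps^{-1}-\Rki{2}{1}(\cdot-x_1)+\Rki{2}{1}(\eps)$ is estimated on $\Gamma_N$ by Lemma~\ref{lma:remainder_est}(i) after rescaling.
\item The trace of $\Ce_\Omega(\eps)$ is bounded by its $H^1(\Dsf)$ norm, which is $O(\eps^5\sqrt{-\ln\eps})$ (resp.\ $O(\eps^{11/2})$).
\end{itemize}
All cross terms are therefore $o(|B_\eps|^2)$ uniformly.

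\textbf{Equivalence with the adjoint form.} Test \eqref{eq:adjoint_fOmega_q_boundary} with $d_{x_1}u_\Omega$ and \eqref{eq:ueps_state_derivative_fomega_equation} with $q_{\Omega,x_1}$. This gives $\int_{\Gamma_N}2(d_{x_1}u)^2\,ds=-c_{\Omega,x_1}q_{\Omega,x_1}(x_1)$, up to the sign convention of the paper.

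\textbf{Main obstacle.} The delicate point is justifying the trace estimates. Two things must be checked: the scaled remainder $\Uk{2}-\Rki{2}{1}$ on $(\Gamma_N)_\eps^{-1}$ must be small enough, in the appropriate dimension-dependent powers of $\eps$, when combined with the $\eps^4$ prefactor and the trace inequality; and the products with $d_{x_1}u$ must stay $o(|B_\eps|^2)$. I would first try to handle this by bounding $\|\cdot\|_{L^2(\Gamma_N)}\le C\|\cdot\|_{H^1(\Dsf\setminus B_\delta(x_1))}$ for a fixed $\delta$.
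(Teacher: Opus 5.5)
Your proposal is correct and follows the paper's proof essentially step for step. It uses the same splitting, the adjoint identity reducing the linear term to $-c_{\Omega,x_1}\int_{B_\eps(x_1)}p_\Omega\,dx$ followed by a Taylor expansion with $\Delta p_\Omega(x_1)=\gamma p_\Omega(x_1)$, the outer representation \eqref{eq:ueps_uomega_Deps} evaluated on $\Gamma_N$ with a trace bound for $\Ce_\Omega(\eps)$, and duality with $q_{\Omega,x_1}$ for the adjoint form.

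One simplification of what you call the main obstacle: the source $\gamma(\Rki{1}{1}-\Uk{1})\chi_B$ of $\Uk{2}$ is radial, so by the mean value property $\Uk{2}=\Rki{2}{1}$ exactly outside $B$. Hence $\Uk{2}\circ T_\eps^{-1}-\Rki{2}{1}(\cdot-x_1)+\Rki{2}{1}(\eps)$ vanishes identically on $\Dsf\setminus B_\eps(x_1)$ (and analogously in $d=3$), so no rescaled trace estimate is needed for that term.
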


The proof is again based on the splitting: (here $u:=u_\Omega$)
\begin{equation}\label{eq:split_cost_J2}
    J(\Omega_\eps(x_1))-J(\Omega) = \int_{\Gamma_N} (u_\eps-u)^2\;ds + \int_{\Gamma_N} 2(u-u_r)(u_\eps-u)\;ds. 
\end{equation}

Then Theorem~\ref{thm:expansion_J2_fOmega} follows immediately from the following two lemmata. 

\begin{lemma}\label{lem:aux_dJ2}
    Let $d\in \{2,3\}$, $\Omega\subset \Dsf$ be an open set and $x_1\in \Dsf\setminus \partial \Omega$.
    \begin{itemize}
        \item[(a)] 
    We have for $\eps>0$:
    \begin{align}\label{eq:expansion_quadratic_int_2ur_boundary}
    \begin{split}
        \int_{\Gamma_N} 2(u-u_r)(u_\eps-u)\;ds =&  -|B_\eps|c_{\Omega,x_1} p_\Omega(x_1) \\
                                      & +  \frac{|B_\eps|^2}{2} \eps^{2-d} \left(\Delta p_\Omega(x_1)c_{\Omega,x_1} \frac{2}{|B|^2} \frac14 \int_B |x|^2-1\;ds \right) + \Ct_{\Omega,x_1}(\eps),
\end{split}
\end{align}
where 
\begin{equation}\label{eq:int_x2_1_boundary}
     \frac14\int_B |x|^2-1\;ds = \begin{cases}
        -\frac{\pi}{8} & \quad \text{ for } d=2 \\
        -\frac{2\pi}{15} & \quad \text{ for } d=3.
    \end{cases}
\end{equation}
The remainder satisfies $|\Ct_{\Omega,x_1}(\eps)| = o(\eps^{d+4})$ uniformly in $\Omega$.
\item[(b)] The expansion in (a) is equivalent to 
\begin{align}
    \int_{\Gamma_N} 2(u-u_r)(u_\eps-u)\;ds =& |B_\eps| \int_{\Gamma_N} 2(u-u_r)d_{x_1}u_\Omega\;ds \\
                                      & + \eps^{d-2}|B_\eps|^2 \int_{\Gamma_N} 2(u-u_r) \mathfrak d_{x_1}^2 u_\Omega \;ds + \Ct_{\Omega,x_1}(\eps). 
\end{align}
\end{itemize}
\end{lemma}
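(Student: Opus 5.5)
The proof of (a) follows that of Lemma~\ref{lem:aux_dJ1}. The only change is that the adjoint now carries the boundary data. First we represent the boundary integral through the boundary adjoint \eqref{eq:adjoint_fOmega_boundary}. The difference $u_\eps-u\in H^1_\Gamma(\Dsf)$ satisfies
\[
\int_\Dsf \nabla(u_\eps-u)\cdot\nabla\varphi+\gamma(u_\eps-u)\varphi\,dx=c_{\Omega,x_1}\int_{B_\eps(x_1)}\varphi\,dx, \qquad \varphi\in H^1_\Gamma(\Dsf).
\]
Testing the weak form of \eqref{eq:adjoint_fOmega_boundary} with $u_\eps-u$ gives
\[
\int_\Dsf\nabla p\cdot\nabla(u_\eps-u)+\gamma p(u_\eps-u)\,dx=-\int_{\Gamma_N}2(u-u_r)(u_\eps-u)\,ds .
\]
Choosing $\varphi=p$ in the first identity and comparing, we obtain the exact formula
\[
\int_{\Gamma_N}2(u-u_r)(u_\eps-u)\,ds=-c_{\Omega,x_1}\int_{B_\eps(x_1)}p\,dx .
\]

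Next we expand $\int_{B_\eps(x_1)}p\,dx=\eps^d\int_B p(x_1+\eps x)\,dx$ by Taylor's formula to fourth order. By Remark~\ref{rem:int_nabla_f_xk} the odd-order terms vanish. For the second-order term we use the partial integration \eqref{eq:partial_int_nablap}, i.e.\ $x=\frac12\nabla(|x|^2-1)$ and $\Div(\nabla^2p(x_1)x)=\Delta p(x_1)$. Rewriting $\eps^{d+2}=\frac{|B_\eps|^2}{2}\eps^{2-d}\frac{2}{|B|^2}$ then produces \eqref{eq:expansion_quadratic_int_2ur_boundary}. The constants in \eqref{eq:int_x2_1_boundary} are computed in polar/spherical coordinates exactly as in \eqref{eq:int_x2_1}.

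The remainder is the fourth-order Taylor term, $\eps^{d+4}\frac1{4!}\int_0^1\int_B\nabla^4p(x_1+s\eps x)[x]^4\,dx\,ds$. Its size is governed by $\|\nabla^4 p\|_{L^\infty}$ on a fixed ball around $x_1$. Since $-\Delta p+\gamma p=0$ in $\Dsf$, $p$ is smooth (even real-analytic) in the interior, and interior estimates bound $\nabla^4p$ on $B_\delta(x_1)$ by $\|p\|_{H^1(\Dsf)}$. In turn $\|p\|_{H^1(\Dsf)}\le C\|u_\Omega-u_r\|_{L^2(\Gamma_N)}$. This bounds the remainder by $O(\eps^{d+4})$ with a constant depending on $\Omega$ only through the data of the state equation.

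The step I expect to be the main obstacle is uniformity in $\Omega$. For $\Omega$ replaced by $\Omega_{\eps_1,\dots,\eps_n}$ we need $\|u_{\Omega'}\|_{L^2(\Gamma_N)}$ bounded uniformly. This follows from the trace theorem and the uniform $H^1$ bound on $u_{\Omega'}$, because $\|f_{\Omega'}\|_{L^\infty}\le\max(|f_1|,|f_2|)$. A further point is that the radius $\delta$ of the interior ball must stay away from the perturbations; here I would take $\delta$ smaller than $\operatorname{dist}(x_1,\{x_2,\dots,x_n\})$, although interior smoothness of $p$ holds regardless, since its equation has zero right-hand side.

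For (b) we argue as in the equivalence part of Lemma~\ref{lem:aux_dJ1}. Since $-\Delta p+\gamma p=0$ in the interior, $\Delta p(x_1)=\gamma p(x_1)$, so the quadratic coefficient becomes $\gamma p(x_1)c_{\Omega,x_1}\frac{2}{|B|^2}\frac14\int_B(|x|^2-1)\,dx$. By Remark~\ref{rem:const_int_x2_1_KOmega} this equals $\frac{2}{|B|^2}C_dK_{\Omega,x_1}\gamma p(x_1)$ with $C_2=2\pi$ and $C_3=-4\pi$.

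Now set $z=\mathfrak d^2_{x_1}u_\Omega$, which solves \eqref{eq:frak_d} resp.\ \eqref{eq:frak_d_3d}, and test with $p$. The left-hand side gives $\int_\Dsf\nabla z\cdot\nabla p+\gamma zp\,dx=\mp\frac{2}{|B|^2}C_d\gamma K_{\Omega,x_1}p(x_1)$. Testing \eqref{eq:adjoint_fOmega_boundary} with $z$ identifies the same integral with $-\int_{\Gamma_N}2(u-u_r)z\,ds$. This testing is justified by a duality pairing: $z\in W^{1,p}$ with $p<\frac d{d-1}$, while $p$ is smooth near $x_1$ and regular up to $\Gamma_N$, for instance by approximating the Dirac source with mollified ones. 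Tracking the signs carefully, the quadratic term becomes $\eps^{d-2}|B_\eps|^2\int_{\Gamma_N}2(u-u_r)\mathfrak d^2_{x_1}u_\Omega\,ds$.

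The linear term is handled the same way: testing \eqref{eq:ueps_state_derivative_fomega_equation} with $p$ and \eqref{eq:adjoint_fOmega_boundary} with $d_{x_1}u_\Omega$ gives $-c_{\Omega,x_1}p(x_1)=\int_{\Gamma_N}2(u-u_r)d_{x_1}u_\Omega\,ds$. Unlike in Lemma~\ref{lem:aux_dJ1}, no $\delta$-term appears, because the boundary integral does not see $x_1$.
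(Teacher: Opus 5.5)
Your route is the paper's. You use the adjoint identity $\int_{\Gamma_N}2(u-u_r)(u_\eps-u)\,ds=-c_{\Omega,x_1}\int_{B_\eps(x_1)}p_\Omega\,dx$, then the Taylor expansion with cancellation of the odd terms and the partial integration \eqref{eq:partial_int_nablap}. For (b) you test the equations for $d_{x_1}u_\Omega$ and $\mathfrak d^2_{x_1}u_\Omega$ against $p_\Omega$, and $\Delta p_\Omega(x_1)=\gamma p_\Omega(x_1)$ explains why no $\delta$-term appears. Part (a) is fine. Note only that what you actually obtain, like the paper (cf.\ the proof of Lemma~\ref{lem:aux_dJ1}), is $|\Ct_{\Omega,x_1}(\eps)|=O(\eps^{d+4})=o(|B_\eps|^2)$, and this is all that is used later. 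The literal $o(\eps^{d+4})$ is false in general, since the $\eps^{d+4}$-coefficient of $\Ct_{\Omega,x_1}$ is a fixed nonzero multiple of $c_{\Omega,x_1}\Delta^2p_\Omega(x_1)=c_{\Omega,x_1}\gamma^2p_\Omega(x_1)$.

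The step that fails as you state it is the end of (b). Set $z=\mathfrak d^2_{x_1}u_\Omega$. Done correctly, your testing gives $\int_\Dsf\nabla z\cdot\nabla p_\Omega+\gamma zp_\Omega\,dx=-\frac{2}{|B|^2}C_d\gamma K_{\Omega,x_1}p_\Omega(x_1)$ in \emph{both} dimensions. The sign is already contained in $C_3=-4\pi$, so there is no $\mp$. Hence $\int_{\Gamma_N}2(u-u_r)\mathfrak d^2_{x_1}u_\Omega\,ds=\frac{2}{|B|^2}C_d\gamma K_{\Omega,x_1}p_\Omega(x_1)$, i.e.\ the bracket in (a) is exactly $\int_{\Gamma_N}2(u-u_r)\mathfrak d^2_{x_1}u_\Omega\,ds$. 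Consequently the quadratic term is $\frac{|B_\eps|^2}{2}\eps^{2-d}\int_{\Gamma_N}2(u-u_r)\mathfrak d^2_{x_1}u_\Omega\,ds$, which is the form used in Theorem~\ref{thm:expansion_J2_fOmega}. It is not $\eps^{d-2}|B_\eps|^2\int_{\Gamma_N}2(u-u_r)\mathfrak d^2_{x_1}u_\Omega\,ds$ as displayed in (b). The two differ by the factor $2\eps^{2(d-2)}$ in a term of exact order $\eps^{d+2}$ whenever $\gamma c_{\Omega,x_1}p_\Omega(x_1)\neq0$, and no $O(\eps^{d+4})$ remainder can absorb that. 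So the displayed (b) is in general not equivalent to (a), and your assertion that careful sign-tracking lands on it is not something the computation delivers. Carry out this bookkeeping explicitly and state (b) with the prefactor $\frac{|B_\eps|^2}{2}\eps^{2-d}$. Flag the printed $\eps^{d-2}|B_\eps|^2$ as a misprint; the same one occurs in Lemma~\ref{lem:aux_dJ1}(a).
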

\begin{proof}
    The proof of (a) is identical with the one of (a) of Lemma~\ref{lem:aux_dJ1}: Notice that $u_\eps-u_0$ satisfies:
    \begin{equation}
        \int_{\Gamma_N} 2(u-u_r)(u_\eps-u)\;ds \stackrel{\eqref{eq:adjoint_fOmega_boundary}}{=}     -\int_{\Dsf}\nabla (u_\eps-u_0) \cdot \nabla p + \gamma (u_\eps-u)p \;dx \stackrel{\eqref{eq:state_fOmega}}{=}   -c_{\Omega,x_1}\int_{B_\eps(x_1)} p\;dx.
    \end{equation}
    Now the expansion of the right hand side follows the same lines as the proof of Lemma~\ref{lem:aux_dJ2}. Notice that $\mathfrak d_{x_1}^2u_\Omega$ appears instead of $d_{x_1}^2u_\Omega $, which is due to the fact that $\Delta p(x_1) = \gamma u_\Omega(x_1)$ compared to $\Delta p(x_1) = \gamma u_\Omega(x_1) + 2(u_\Omega-u_r)(x_1)$ in case of the cost functional $J_1(\Omega)$. 

    The proof of the equivalence of (a) and (b) is also the same as the one of Lemma~\ref{lem:aux_dJ1} with the obvious changes.

\end{proof}

\begin{lemma}
   Let $d\in \{2,3\}$, $\Omega\subset \Dsf$ be an open set and $x_1\in \Dsf\setminus \partial \Omega$.
\begin{itemize} 
        \item[(a)] We have the expansion
    \begin{equation}\label{eq:expansion_J3_ueps_u}
        \int_{\Gamma_N} (u_{\Omega_\eps(x_1)} - u_\Omega)^2\;ds = |B_\eps|^2\int_{\Gamma_N} (d_{x_1}u_\Omega)^2\;ds + \Ct_{\Omega}(\eps),
\end{equation}
where $|\Ct_\Omega(\eps)|= o(\eps^4)$ for $d=2$ and $|\Ct_\Omega(\eps)|= o(\eps^6)$ for $d=3$ uniformly in $\Omega$. 
\item[(b)] The expansion in (a) is equivalent to 
    \begin{equation}\label{eq:expansion_J3_ueps_u_}
        \int_{\Gamma_N} (u_{\Omega_\eps(x_1)} - u_\Omega)^2\;ds = -\frac{1}{2}c_{\Omega,x_1} q_{\Omega,x_1}(x_1)  + \Ct_{\Omega}(\eps).
\end{equation}
\end{itemize}
\end{lemma}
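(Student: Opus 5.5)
Since $\Gamma_N\subset\partial\Dsf$ has positive distance $\delta_0$ from $x_1$, I will restrict the compound layer expansion of Corollary~\ref{cor:expansion_remainder_D} to a neighbourhood of $\Gamma_N$ and take traces. For $\eps<\delta_0/2$, write on $\Dsf\setminus B_{\delta_0/2}(x_1)$, as in \eqref{eq:ueps_uomega_Deps},
\begin{equation*}
u_\eps-u=|B_\eps|\,d_{x_1}u_\Omega+\eps^{2d-2}\,\frac{|B|^2}{2}\,\mathfrak d^2_{x_1}u_\Omega+\eps^{2d-2}\,\frac{|B|^2}{2}\,W_\eps+\Ce_\Omega(\eps),
\end{equation*}
where $W_\eps(x):=\Uk{2}(T_\eps^{-1}x)-\Rki{2}{1}(x-x_1)+\Rki{2}{1}(\eps)$. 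For $d=3$ the powers of $\eps$ are adjusted as in \eqref{eq:expansion_ueps_u_proof_split_3d}, and the $\Rki{2}{1}(\eps)$ term is absent. Away from $x_1$, the functions $d_{x_1}u_\Omega$ and $\mathfrak d^2_{x_1}u_\Omega$ are smooth up to $\Gamma_N$, since they solve homogeneous equations with zero Neumann data there. Hence their traces on $\Gamma_N$ are in $L^2(\Gamma_N)$. Their norms depend on $\Omega$ only through the constant $c_{\Omega,x_1}$, and $|c_{\Omega,x_1}|=|f_1-f_2|$.

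Squaring and integrating over $\Gamma_N$ gives the leading term $|B_\eps|^2\int_{\Gamma_N}(d_{x_1}u_\Omega)^2\,ds$. Every other product carries at least the factor $|B_\eps|\,\eps^{2d-2}$, which is $\eps^{6}$ for $d=2$ and $\eps^{7}$ for $d=3$, or it involves $\Ce_\Omega(\eps)$. By Cauchy--Schwarz, it therefore suffices to show that $\|W_\eps\|_{L^2(\Gamma_N)}$ stays bounded and that $\|\Ce_\Omega(\eps)\|_{L^2(\Gamma_N)}=o(\eps^{d})$.

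For $W_\eps$, I use scaling. We have $\|W_\eps\|_{L^2(\Gamma_N)}=\eps^{(d-1)/2}\|\Uk{2}-\Rki{2}{1}\|_{L^2(\Gamma_\eps^{-1})}$, where $\Gamma_\eps^{-1}$ here denotes the rescaled $\Gamma_N$. Lemma~\ref{lma:remainder_est}(i) with $m=d-1$ bounds the last norm by $C\eps^{(d-1)/2}$. So $W_\eps=O(\eps^{d-1})$ on $\Gamma_N$, which is more than enough.

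For $\Ce_\Omega(\eps)$, the trace theorem on $\Dsf$ gives $\|\Ce_\Omega(\eps)\|_{L^2(\Gamma_N)}\le C\|\Ce_\Omega(\eps)\|_{H^1(\Dsf)}$. Corollary~\ref{cor:expansion_remainder_D} bounds this by $O(\eps^5\sqrt{-\ln\eps})$ for $d=2$ and $O(\eps^{11/2})$ for $d=3$. Both are $o(\eps^{d})$: for $d=2$ this gives total contributions $o(\eps^4)$, and for $d=3$ the cross term with $|B_\eps|d_{x_1}u_\Omega$ is $O(\eps^{3+11/2})=o(\eps^6)$. All constants are uniform in $\Omega$ for the same reason as before, namely dependence through $c_{\Omega,x_1}$ only. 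This proves (a).

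For (b), I test the weak form of \eqref{eq:adjoint_fOmega_q_boundary} with $d_{x_1}u_\Omega$ and the weak form of \eqref{eq:ueps_state_derivative_fomega_equation} with $q_{\Omega,x_1}$. This yields
\begin{equation*}
-\int_{\Gamma_N}2(d_{x_1}u_\Omega)^2\,ds=\int_\Dsf\nabla q\cdot\nabla d_{x_1}u_\Omega+\gamma\, q\, d_{x_1}u_\Omega\,dx=c_{\Omega,x_1}q_{\Omega,x_1}(x_1).
\end{equation*}
Here $q$ is continuous near $x_1$, which justifies the pairing with the Dirac measure. Substituting into (a) gives \eqref{eq:expansion_J3_ueps_u_} with the factor $|B_\eps|^2$, which appears to be missing from the displayed statement.

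The main obstacle is the justification of the trace/regularity step. One must show that the formal boundary identities hold for $d_{x_1}u_\Omega\in W^{1,p}$, which is only $W^{1,p}$ near $x_1$ but $H^1$ near $\Gamma_N$. I would handle this with a cutoff function separating $x_1$ from $\Gamma_N$, together with local elliptic regularity.
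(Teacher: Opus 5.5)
Your proposal is correct and follows essentially the paper's proof. It uses the same far-field decomposition \eqref{eq:ueps_uomega_Deps} restricted to $\Gamma_N$, the trace theorem with Corollary~\ref{cor:expansion_remainder_D} for $\Ce_\Omega(\eps)$, and for (b) the same cross-testing of \eqref{eq:adjoint_fOmega_q_boundary} and \eqref{eq:ueps_state_derivative_fomega_equation}. The only variation is cosmetic: you bound $W_\eps$ by rescaling and Lemma~\ref{lma:remainder_est}(i), whereas the paper uses the pointwise bound $C\eps/|x-x_1|$.

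You are right that the displayed (b) is missing the factor $|B_\eps|^2$. One slip to fix: the second-order coefficient you write as $\eps^{2d-2}$ must be $\eps^4$ for $d=2$, as in \eqref{eq:ueps_uomega_Deps}. Your count $|B_\eps|\eps^4=O(\eps^6)$ depends on this; with $\eps^2$ the cross term would only be $O(\eps^4)$, not $o(\eps^4)$.
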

\begin{proof}
    In the following we set $u_\eps := u_{\Omega_\eps(x_1)}$ and $u:= u_\Omega$.

    \emph{ad (b):} This follows by testing the weak formulation of $q_{\Omega,x_1}$ with $d_{x_1}u_\Omega$ and testing the weak formulation of $d_{x_1}u_\Omega$ with $q_{\Omega,x_1}$:
    \begin{align}
    \int_{\Gamma_N} 2(d_{x_1}u_\Omega)^2\;ds& \stackrel{\eqref{eq:adjoint_fOmega_q_boundary}}{=} \int_{\Dsf} \nabla (q_{\Omega,x_1}) \cdot \nabla(d_{x_1}u_\Omega) + \gamma q_{\Omega,x_1} d_{x_1}u_\Omega \;dx 
    \stackrel{\eqref{eq:ueps_state_derivative_fomega_equation}}{=}  -c_{\Omega,x_1} q_{\Omega,x_1}(x_1). 
    \end{align}

    \emph{ad (a):} As in the proof of Lemma~\ref{lem:ueps_uOmega_J1_expansion} we write (see eqn. \eqref{eq:ueps_uomega_Deps}) for $d=2$
    \begin{align}\label{eq:ueps_u_Deps_boundary}
    \begin{split}
    u_\eps - u =&  \eps^2 |B|d_{x_1}u_\Omega + \eps^4 \frac12 |B|^2  \mathfrak d_{x_1}^2u \\
    & + \eps^4 \frac12 |B|^2 (\Uk{2}\circ T_\eps^{-1} - \Rki{2}{1}(x-x_1) + \Rki{2}{1}(\eps)) + \Ce_\Omega(\eps) \quad \text{ on } \Dsf\setminus \bar B_\eps(x_1)
\end{split}
\end{align}
and similarly for $d=3$:
    \begin{align}\label{eq:ueps_u_Deps_boundary_3d}
    \begin{split}
    u_\eps - u =&  \eps^3 |B|d_{x_1}u_\Omega + \eps^6 \frac12 |B|^2  \mathfrak d_{x_1}^2u \\
                & + \eps^6 \frac12 |B|^2 (\eps^{-1}\Uk{2}\circ T_\eps^{-1} - \Rki{2}{1}(x-x_1)) + \Ce_\Omega(\eps) \quad \text{ on } \Dsf\setminus \bar B_\eps(x_1).
\end{split}
\end{align}

Since $d_{x_1}u_\Omega = |B|^{-1}(\Rki{1}{1}(x-x_1) + \vk{1})$ the function $d_{x_1}u_\Omega$ is continuous on $\Gamma_N$. It only has a singularity at $x_1\in \Dsf\setminus\partial \Omega$. Therefore
\begin{equation}
    \int_{\Gamma_N}(d_{x_1}u)^2\;ds
\end{equation}
is well-defined. Expanding $(u_\eps-u)^2$ using \eqref{eq:ueps_u_Deps_boundary} we see that for $d=2$ (and similarly for  $d=3$ in view of \eqref{eq:ueps_u_Deps_boundary_3d})  it is sufficient to shows that the integrals of $(\mathfrak d_{x_1}^2u)^2$ and $(\Uk{2}\circ T_\eps^{-1} - \Rki{2}{1}(x-x_1) + \Rki{2}{1}(\eps))^2$ and $\Ce_\Omega(\eps)^2$ over $\Gamma_N$ are bounded. 
The other terms appearing in the expansion are products of $\Uk{2}\circ T_\eps^{-1} - \Rki{2}{1}(x-x_1) + \Rki{2}{1}(\eps), \Ce_\Omega(\eps), d_{x_1}u$ and $\mathfrak d_{x_1}^2u$ and can be estimated with H\"older's inequality. Clearly $\mathfrak d_{x_1}^2u$ is bounded on $\Gamma_N$ as it is continuous. Now since $|\Uk{2}(x)-\Rki{2}{1}(x)|\le C|x|^{-(d-1)}$ for all large $|x|$ and $d\in\{2,3\}$,  we have replacing $x$ with $T_\eps^{-1}(x)$ and using $\Rki{2}{1}(T_\eps^{-1}(x))=\Rki{2}{1}(x-x_1) - \Rki{2}{1}(\eps)$ for $d=2$ and $\Rki{2}{1}(T_\eps^{-1}(x)) = \eps \Rki{2}{1}(x-x_1)$  for $d=3$ that  
\begin{equation}
    |\eps^{2-d}\Uk{2}(T_\eps^{-1}(x))-\Rki{2}{1}(x-x_1) - c_d\Rki{2}{1}(\eps)| \le C \eps\frac{1}{|x-x_1|}, 
\end{equation}
where $c_d=1$ for $d=2$ and $c_d=0$ for $d=3$. Therefore
\begin{equation}
    \int_{\Gamma_N}(\eps^{d-2}\Uk{2}\circ T_\eps^{-1} -  \Rki{2}{1}(x-x_1) + c_d\Rki{2}{1}(\eps))^2\;ds \le C\eps^2 \int_{\Gamma_N}\frac{1}{|x-x_1|^2} \;ds. 
\end{equation}
Finally we estimate the integral over $(\Ce_\Omega(\eps))^2$. For this we use the trace theorem and H\"older's inequality to obtain
\begin{align}
    \begin{split}
    \int_{\Gamma_N}(\Ce_\Omega(\eps))^2 \;ds & \le \int_{\Dsf} |\Ce_\Omega(\eps)|+ |\nabla(\Ce_\Omega(\eps))| \;dx \\
                                             & = \int_{\Dsf} (\Ce_\Omega(\eps))^2 + 2 |\Ce_\Omega(\eps)\nabla(\Ce_\Omega(\eps))| \;dx\\
                                             & \le \int_{\Dsf} (\Ce_\Omega(\eps))^2 + 2 \|\Ce_\Omega(\eps)\|_{L^2(\Dsf)}\|\nabla(\Ce_\Omega(\eps))\|_{L^2(\Dsf)^d} \\
                                             & \le 2\|\Ce_\Omega(\eps)\|_{L^2(\Dsf)}^2 + \|\nabla(\Ce_\Omega(\eps))\|_{L^2(\Dsf)^d}^2\\
                                             & \le C\|\Ce_\Omega(\eps)\|_{H^1(\Dsf)}^2 \stackrel{Cor.~\ref{cor:expansion_remainder_D}}{=} \begin{cases}
                                                     O(\eps^{10}(-\ln(\eps))) & \text{ for } d=2\\
                                                     O(\eps^{14}) & \text{ for } d=3. 
                                             \end{cases}
\end{split} 
\end{align}
This finishes the proof.
\end{proof}

\subsubsection{Taylor-like expansion of $J_2(\Omega)$ with multiple holes}

For dimension $d=2$ we have the following result for multiple holes. We only formulate it in terms of the state derivatives $d_{x_i}u_\Omega$ and $d_{x_ix_j}^2u_\Omega$, however, using Theorem~\ref{thm:expansion_J2_fOmega} it is readily reformulated in terms of the adjoints $p_\Omega$ and $q_{\Omega,x_1}$. 
\begin{theorem}
Let $\Omega\subset \Dsf\subset \VR^2$ be an open set. For $x_1,\ldots,x_n\in \Dsf\setminus\partial \Omega$, $n\ge 1$ with $x_i\ne x_j$ for $i\ne j$, we have
\begin{align}
    J(\Omega_{\eps_1,\ldots,\eps_n})  = J(\Omega) + D\VJ (\Omega)(x_1,\ldots, x_n)\cdot \begin{pmatrix}
|B_{\eps_1}|\\ \vdots \\ |B_{\eps_n}|
\end{pmatrix} 
        + & \frac12D^2 \VJ(\Omega)(x_1,\ldots, x_n) \begin{pmatrix}
|B_{\eps_1}| \\\vdots \\ |B_{\eps_n}|
\end{pmatrix}\cdot \begin{pmatrix}
|B_{\eps_1}| \\ \vdots \\|B_{\eps_n}|
\end{pmatrix}  
                              \\
          &+ o(|B_{\eps_1}|^2+\cdots + |B_{\eps_n}|^2),
\end{align}
where for $i,j\in \{1,\ldots, n\}$:
\begin{align}
    (D^2 \VJ(\Omega)(x_1,\ldots, x_n))_{ii} & = \int_{\Gamma_N} 2 (d_{x_i}u_\Omega)^2\;ds + \int_{\Gamma_N} 2(u-u_r)\mathfrak d_{x_1}^2 u_\Omega \;ds \\
    (D^2 \VJ(\Omega)(x_1,\ldots, x_n))_{ij} & = \int_{\Gamma_N}  2(d_{x_i}u_\Omega) (d_{x_j} u_\Omega)\;ds, \quad i\ne j,
\end{align}
and for $i\in \{1,\ldots, n\}$:
\begin{equation}
    (D\VJ (\Omega)(x_1,\ldots, x_n))_i :=  \int_{\Gamma_N} 2(u-u_r)(d_{x_i}u_\Omega)\;ds.
\end{equation}
\end{theorem}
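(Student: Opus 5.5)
The plan is to apply Theorem~\ref{main:thm_expansion_general} with $\ell(\eps):=0$ and $d=2$, following the proof of Theorem~\ref{main:expansion_J1_multiple_2d} closely. Throughout, the adjoints are now the boundary-type ones \eqref{eq:adjoint_fOmega_boundary} and \eqref{eq:adjoint_fOmega_q_boundary}. Assumption (A1) holds directly by Theorem~\ref{thm:expansion_J2_fOmega}(a). That theorem gives the one-hole expansion with $DJ_2(\Omega)(x_1)=\int_{\Gamma_N}2(u_\Omega-u_r)d_{x_1}u_\Omega\,ds$ and with diagonal entry
\[
D^2J_2(\Omega)(x_1,x_1)=\int_{\Gamma_N}2(d_{x_1}u_\Omega)^2+2(u_\Omega-u_r)\mathfrak d^2_{x_1}u_\Omega\,ds .
\]
The remainder is $o(|B_\eps|^2)$ uniformly in $\Omega$, because all constants depend on $\Omega$ only through $c_{\Omega,x_1}$, and $|\sigma_\Omega|=1$. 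Assumption (A3) is trivial since $\ell=0$.

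For (A2), fix $x_1\neq x_2$. For $\eps_2$ small we have $\sigma_{\Omega_{\eps_2}(x_2)}(x_1)=\sigma_\Omega(x_1)$, so $d_{x_1}u_{\Omega_{\eps_2}(x_2)}=d_{x_1}u_\Omega$. Hence
\[
DJ_2(\Omega_{\eps_2})(x_1)-DJ_2(\Omega)(x_1)=\int_{\Gamma_N}2(u_{\Omega_{\eps_2}}-u_\Omega)d_{x_1}u_\Omega\,ds .
\]
Test \eqref{eq:adjoint_fOmega_q_boundary} with $u_{\Omega_{\eps_2}}-u_\Omega$ and \eqref{eq:diff_eps2} with $q_{\Omega,x_1}$. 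This turns the boundary integral into $-c_{\Omega,x_2}\int_{B_{\eps_2}(x_2)}q_{\Omega,x_1}\,dx$. Since $q_{\Omega,x_1}$ solves a homogeneous equation in $\Dsf$, it is smooth in the interior. A Taylor expansion at $x_2$ then yields $-|B_{\eps_2}|c_{\Omega,x_2}q_{\Omega,x_1}(x_2)+O(\eps_2^4)$, and the $O(\eps_2^4)$ term is uniform in $\Omega$ because $q_{\Omega,x_1}$ depends on $\Omega$ only through the sign $c_{\Omega,x_1}$. Testing once more, with $d_{x_2}u_\Omega$ in \eqref{eq:adjoint_fOmega_q_boundary}, identifies $-c_{\Omega,x_2}q_{\Omega,x_1}(x_2)=\int_{\Gamma_N}2(d_{x_1}u_\Omega)(d_{x_2}u_\Omega)\,ds$. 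This is exactly the claimed off-diagonal entry, and it is symmetric.

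For (A4), the off-diagonal case is immediate: $d_{x_i}u_{\Omega_{\eps_3}}=d_{x_i}u_\Omega$ for $x_i\ne x_3$, so the entry $D^2J_2(\cdot)(x_1,x_2)$ does not change at all. The diagonal case is the main obstacle. The first part, $\int_{\Gamma_N}2(d_{x_1}u)^2\,ds$, is unchanged by the same argument. It remains to show that
\[
\int_{\Gamma_N}2(u_{\Omega_{\eps_3}}-u_r)\mathfrak d^2_{x_1}u_{\Omega_{\eps_3}}\,ds\to\int_{\Gamma_N}2(u_\Omega-u_r)\mathfrak d^2_{x_1}u_\Omega\,ds .
\]
First, $\mathfrak d^2_{x_1}u$ depends on $\Omega$ only through $K_{\Omega,x_1}$, i.e.\ through $\sigma_\Omega(x_1)$, so $\mathfrak d^2_{x_1}u_{\Omega_{\eps_3}}=\mathfrak d^2_{x_1}u_\Omega$, and this function is continuous and bounded on $\Gamma_N$. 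The difference is therefore bounded by $C\|u_{\Omega_{\eps_3}}-u_\Omega\|_{L^2(\Gamma_N)}$. By the trace theorem and the energy estimate for \eqref{eq:diff_eps2}, this is at most $C\|f_{\Omega_{\eps_3}}-f_\Omega\|_{H^{-1}}\le C|B_{\eps_3}|^{1/2}=o(1)$, uniformly in $\Omega$.

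Alternatively, one can use the adjoint form $-c_{\Omega,x_1}\frac{1}{4\pi}\Delta p_\Omega(x_1)=-c_{\Omega,x_1}\frac{\gamma}{4\pi}p_\Omega(x_1)$ together with interior elliptic regularity as in \eqref{eq:elliptic_regularity}. With (A1)--(A4) verified, Theorem~\ref{main:thm_expansion_general} yields the stated expansion for all $n$.
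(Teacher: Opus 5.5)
Your proposal is correct and follows the paper's proof. Both apply Theorem~\ref{main:thm_expansion_general} with $\ell=0$, take (A1) from Theorem~\ref{thm:expansion_J2_fOmega}(a), verify (A2) via the boundary adjoint $q_{\Omega,x_1}$ and a Taylor expansion at $x_2$, and get (A4) off the diagonal from the invariance of $d_{x_i}u_\Omega$. The one deviation is the diagonal case of (A4): you use the state-derivative form, the invariance of $\mathfrak d^2_{x_1}u_\Omega$, and a global trace/energy bound $O(|B_{\eps_3}|^{1/2})$, where the paper uses the adjoint form with local elliptic regularity near $x_3$; your argument is cleaner and also avoids the paper's slip of using the $J_1$ relation $\Delta p_\Omega=\gamma p_\Omega+2(u_\Omega-u_r)$, whereas the boundary adjoint gives $\Delta p_\Omega=\gamma p_\Omega$.
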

\begin{proof}
    We apply Theorem~\ref{main:thm_expansion_general} with $\ell(\eps):=0$, $d=2$ and $n\ge 1$. Assumption~(A1) is satisfied in view of Theorem~\ref{thm:expansion_J2_fOmega}. Assumption (A2): Similarly to the computation of \eqref{eq:diff_DJ(Omega_eps2)} we have for $x_1,x_2\in \Dsf\setminus\partial \Omega$,  $x_1\ne x_2$
        \begin{equation}
    DJ(\Omega_{\eps_2}(x_2))(x_1)-DJ(\Omega)(x_1) = -c_{\Omega,x_2} |B_{\eps_2}|q_{\Omega,x_1}(x_2) + \underbrace{(\eps_2)^4\int_0^1\int_B  \frac12 \nabla^2 q_{\Omega,x_1}(x_2 + sx )x\cdot x\;dx\,ds}_{=:\frR{2}_{\Omega,x_1,x_2}(\eps)},
\end{equation}
where $D^2J(\Omega)(x_1,x_2) = \int_\Dsf (d_{x_1}u_\Omega)(d_{x_2}u_\Omega)\;dx =  -c_{\Omega,x_1} q_{\Omega,x_1}(x_2)$ and $q_{\Omega,x_1}$ solves \eqref{eq:adjoint_fOmega_q_boundary}. This shows that Assumption~(A2) is satisfied. Assumption~(A3) is clearly satisfied since $\ell(\eps)=0$. To check Assumption~(A4) let $x_1,x_2,x_3\in \Dsf\setminus\partial \Omega$ with $x_1\ne x_2\ne x_3$. Then 
\begin{equation}
    D^2J(\Omega_{\eps_3})(x_1,x_2) = D^2J(\Omega)(x_1,x_2)
\end{equation}
since $d_{x_1}u_{\Omega_{\eps_3}} = d_{x_1}u_{\Omega}$, $i=1,2$, for $\eps_3>0$ small. For $x_1=x_2\ne x_3$ we have according to Theorem~\ref{thm:expansion_J2_fOmega} with $\Delta p_\Omega(x_1) = \gamma p_\Omega(x_1) + 2(u_\Omega-u_r)(x_1)$:
\begin{equation}
    D^2J(\Omega_{\eps_3})(x_1,x_1) = -c_{\Omega,x_1}\frac{1}{4\pi}(\gamma p_{\Omega_{\eps_3}}(x_1) + 2(u_{\Omega_{\eps_3}}-u_r)(x_1)).
\end{equation}
Then as in \eqref{eq:elliptic_regularity} using elliptic regularity near $x_3$, we obtain for $\delta>0$ small and $p>2$:
\begin{align}
    \begin{split}
    |p_{\Omega_{\eps_3}}(x_1) -  p_\Omega(x_1)| & \le \|p_{\Omega_{\eps_3}} -  p_\Omega\|_{W^{1,p}(B_{\delta}(x_3))}  
                                                 \le \|u_{\Omega_{\eps_3}} - u_\Omega\|_{L^p(B_{2\delta}(x_3))}\\
    |u_{\Omega_{\eps_3}}(x_1) -  u_\Omega(x_1)|  & \le \|u_{\Omega_{\eps_3}} -  u_\Omega\|_{W^{1,p}(B_{\delta}(x_3))} 
                                                  \le \|f_{\Omega_{\eps_3}}-f_\Omega\|_{L^p(B_{2\delta}(x_3))}. 
\end{split}
\end{align}
Therefore $|\frR{4}_{\Omega,x_1,x_2}(\eps_3)| = o(1)$ uniformly in $\Omega$ since:
\begin{align}
    \begin{split}
        |\frR{4}_{\Omega,x_1,x_2}(\eps_4)| & = | D^2J(\Omega_{\eps_3})(x_1,x_1)- D^2J(\Omega)(x_1,x_1)|  \\
                                           & \le C \|f_{\Omega_{\eps_3}}-f_\Omega\|_{L^p(B_{2\delta}(x_3))} \le C |B_{\eps_3}|^{\frac1p}.  
\end{split}
\end{align}
This finishes the proof.
\end{proof}

We finally state the result in dimension $d=3$ which follows again by an application of Theorem~\ref{main:thm_expansion_general} with $\ell(\eps):= \eps^{-1}|B_\eps|$. Since the proof is similar to the previous one we will omit it. 

\begin{theorem}
Let $\Omega\subset \Dsf$ be an open set. For $x_1,\ldots,x_n\in \Dsf\setminus\partial \Omega$, $n\ge 1$ with $x_i\ne x_j$ for $i\ne j$, we have
\begin{align}
    \begin{split}
    J(\Omega_{\eps_1,\ldots,\eps_n})  =  J(\Omega) + D\VJ (\Omega)(x_1,\ldots, x_n)\cdot \begin{pmatrix}
|B_{\eps_1}|\\ \vdots \\ |B_{\eps_n}|
\end{pmatrix} 
        &+ 
        \frac12D^{\frac32} \VJ(\Omega)(x_1,\ldots, x_n) \begin{pmatrix}
            \eps^{-\frac12}|B_{\eps_1}| \\\vdots \\ \eps^{-\frac12}|B_{\eps_n}|
\end{pmatrix}\cdot \begin{pmatrix}
\eps^{-\frac12}|B_{\eps_1}| \\ \vdots \\  \eps^{-\frac12}|B_{\eps_n}|
\end{pmatrix}  \\
          &+ \frac12D^2 \VJ(\Omega)(x_1,\ldots, x_n) \begin{pmatrix}
|B_{\eps_1}| \\\vdots \\ |B_{\eps_n}|
\end{pmatrix}\cdot \begin{pmatrix}
|B_{\eps_1}| \\ \vdots \\|B_{\eps_n}|
\end{pmatrix}     \\
          &+ o(|B_{\eps_1}|^2+\cdots + |B_{\eps_n}|^2),
    \end{split}
\end{align}
where for $i,j=1,\ldots, n$:
\begin{align}
    (D^{\frac32} \VJ(\Omega)(x_1,\ldots, x_n))_{ii} & = \int_{\Gamma_N} 2(u-u_r)\mathfrak d_{x_1}^2 u_\Omega \;ds \\
    (D^{\frac32} \VJ(\Omega)(x_1,\ldots, x_n))_{ij} & = 0, \quad i\ne j \\
    (D^2 \VJ(\Omega)(x_1,\ldots, x_n))_{ij} & = \int_{\Gamma_N}  2(d_{x_i}u_\Omega) (d_{x_j} u_\Omega)\;ds 
\end{align}
and for $i\in \{1,\ldots,n\}$:
\begin{equation}
    (D\VJ (\Omega)(x_1,\ldots, x_n))_i =  \int_{\Gamma_N} 2(u-u_r)(d_{x_i}u_\Omega)\;ds.
\end{equation}
\end{theorem}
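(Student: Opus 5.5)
The plan is to apply Theorem~\ref{main:thm_expansion_general} with $d=3$ and $\ell(\eps):=\frac12\eps^{-1}|B_\eps|^2$, mirroring the two-dimensional proof for $J_2$. Since $|B_\eps|\sim\eps^3$, we have $|B_\eps|^2\sim\eps^6<\ell(\eps)\sim\eps^5<|B_\eps|$ for small $\eps$, so $\ell$ is admissible. With this choice,
\[
\ell(\eps_i)\,D^{\frac32}J(\Omega)(x_i,x_i)=\tfrac12\,\eps_i^{-1}|B_{\eps_i}|^2\int_{\Gamma_N}2(u_\Omega-u_r)\,\mathfrak d^2_{x_i}u_\Omega\;ds,
\]
which is exactly the diagonal middle term in the statement. (The statement's $\eps^{-\frac12}$ is read as $\eps_i^{-\frac12}$.)

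\emph{Checking (A1)–(A3).} Assumption (A1) is Theorem~\ref{thm:expansion_J2_fOmega}(b). It gives $D^{\frac32}J(\Omega)(x_1,x_1)=\int_{\Gamma_N}2(u_\Omega-u_r)\mathfrak d^2_{x_1}u_\Omega\,ds$ and $D^2J(\Omega)(x_1,x_1)=\int_{\Gamma_N}2(d_{x_1}u_\Omega)^2\,ds$, with remainder $o(|B_\eps|^2)$ uniformly in $\Omega$.

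For (A2), I repeat the computation \eqref{eq:diff_DJ(Omega_eps2)}. First, $d_{x_1}u_{\Omega_{\eps_2}}=d_{x_1}u_\Omega$, because $\sigma_{\Omega_{\eps_2}(x_2)}(x_1)=\sigma_\Omega(x_1)$. Testing \eqref{eq:adjoint_fOmega_q_boundary} with $u_{\Omega_{\eps_2}}-u_\Omega$ then gives
\[
DJ(\Omega_{\eps_2})(x_1)-DJ(\Omega)(x_1)=-c_{\Omega,x_2}\int_{B_{\eps_2}(x_2)}q_{\Omega,x_1}\,dx .
\]
Since $q_{\Omega,x_1}$ is smooth near $x_2$, a Taylor expansion yields $-c_{\Omega,x_2}|B_{\eps_2}|q_{\Omega,x_1}(x_2)+O(\eps_2^5)$. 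Testing the equation \eqref{eq:ueps_state_derivative_fomega_equation} for $d_{x_2}u_\Omega$ against $q_{\Omega,x_1}$ identifies $-c_{\Omega,x_2}q_{\Omega,x_1}(x_2)=\int_{\Gamma_N}2(d_{x_1}u_\Omega)(d_{x_2}u_\Omega)\,ds$, which is the claimed off-diagonal entry. The constants depend on $\Omega$ only through $|c_{\Omega,x_i}|=|f_1-f_2|$, so the remainder is uniform.

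For (A3), I need $D^{\frac32}J(\Omega_{\eps_2})(x_1,x_1)-D^{\frac32}J(\Omega)(x_1,x_1)=O(|B_{\eps_2}|)$. Arguing as in Lemma~\ref{lem:aux_dJ2}, this quantity equals $C\,c_{\Omega,x_1}\,\gamma\big(p_{\Omega_{\eps_2}}(x_1)-p_\Omega(x_1)\big)$. Here $\mathfrak d^2_{x_1}u$ is unchanged, since it depends on $\Omega$ only through $c_{\Omega,x_1}$. The difference $w=p_{\Omega_{\eps_2}}-p_\Omega$ solves the homogeneous equation with Neumann datum $-2(u_{\Omega_{\eps_2}}-u_\Omega)$ on $\Gamma_N$. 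By Corollary~\ref{cor:expansion_remainder_D}(b), $u_{\Omega_{\eps_2}}-u_\Omega=|B_{\eps_2}|d_{x_2}u_\Omega+\dots$ away from $x_2$, so it is $O(\eps_2^3)$ in $H^{1/2}(\Gamma_N)$. Interior regularity at $x_1\neq x_2$ then gives $|w(x_1)|\le C\eps_2^3=O(|B_{\eps_2}|)$.

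\emph{Checking (A4).} For $x_1\ne x_2$, the term $D^2J(\Omega_{\eps_3})(x_1,x_2)$ is literally unchanged, since each $d_{x_i}u$ is unchanged. For $x_1=x_2\ne x_3$ the diagonal $D^2J$ is $\int_{\Gamma_N}2(d_{x_1}u_\Omega)^2\,ds$, which is also unchanged, so $\frR{4}\equiv0$. Note that in $d=3$ the $p$-dependent term has been moved into $D^{\frac32}$, which simplifies (A4) compared with $d=2$.

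\emph{Main obstacle.} I expect the delicate step to be (A3), specifically bounding $p_{\Omega_{\eps_2}}(x_1)-p_\Omega(x_1)$ by $O(|B_{\eps_2}|)$ uniformly in $\Omega$, rather than the weaker $|B|^{1/p}$ rate that sufficed for (A4) in the $d=2$ proof. My first attempt is to use the explicit outer expansion of $u_{\Omega_{\eps_2}}-u_\Omega$ on $\Gamma_N$, which lies away from $x_2$, together with the trace estimate for $\Ce_\Omega$ already used in the preceding lemma.

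If that rate is hard to obtain, a fallback is available. Check the final remainder in the proof of Theorem~\ref{main:thm_expansion_general}: there (A3) only enters through $\ell(\eps_1)\frR{3}(\eps_2)$. Since $\ell(\eps_1)=O(\eps_1^5)$, an $o(1)$ bound on $\frR{3}$ already makes this product $o(\eps_1^5)$. That is still not $o(\eps_1^6)$, so the $O(|B|)$ rate is genuinely needed in the form given by the theorem.
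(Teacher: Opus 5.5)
Your proposal is correct and takes the paper's route. The paper simply invokes Theorem~\ref{main:thm_expansion_general} in $d=3$ and omits the check of (A1)--(A4) as being similar to the two-dimensional case; its stated $\ell(\eps)=\eps^{-1}|B_\eps|$ must be read as your $\frac12\eps^{-1}|B_\eps|^2$, and your $O(|B_{\eps_2}|)$ bound for (A3), obtained from the adjoint difference and interior regularity rather than the weaker $|B|^{1/p}$ estimate of the $d=2$ proof, supplies exactly the rate that omitted step needs.
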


\section*{Conclusion}
In this paper we studied second order topological expansions with several inclusions/holes for several cost functional $J(u_\Omega)$ subject to the PDE $-\Delta u_\Omega + \gamma u_\Omega = f_\Omega$ with mixed boundary conditions. 

We provided a general theorem that uses the expansion with respect to one inclusion/hole in order to obtain the more general case of multiple inclusions. 
We proved that the second order term has a structure arising as the iterated derivative of the cost functional using the notion of the topological state derivative. 

Although we treated the seemingly simple case of right hand side perturbations our methodology also applies to operator or lower order perturbations, e.g., PDEs of the form $-\Div(\beta_\Omega \nabla u_\Omega) + \alpha_\Omega u_\Omega =f$ with piecewise constant functions $\beta_\Omega$ and $\alpha_\Omega$ defined in the same manner as $f_\Omega$. The precise structure will evidently differ from our results therefore requires further research and is left for future investigations. 

Since we proved the quadratic nature of the topological expansion, at least in $d=2$, it seems possible that such an expansion can have numerical applications that should be examined in a future paper.

 \bibliographystyle{plain}
 \bibliography{multiple_holes.bib}
\end{document}